\numberwithin{equation}{section}
\def\titlerunning#1{\gdef\titrun{#1}}
\def\author#1{\gdef\autrun{\def\and{\unskip, }#1}\gdef\@author{#1}}
\def\keywords#1{\par
\noindent\textbf{Keywords.} #1}
\theoremstyle{plain}
\newtheorem{Thm}{Theorem}[section]
\newtheorem{Lem}[Thm]{Lemma}
\newtheorem{Cor}[Thm]{Corollary}
\newtheorem{Prop}[Thm]{Proposition}
\newtheorem*{Thm*}{Theorem}
\newtheorem*{claim*}{Claim}
\theoremstyle{definition}
\newtheorem*{Def*}{Definition}
\newtheorem*{Cor*}{Corollary}
\newtheorem{Rem}[Thm]{Remark}
\newcommand{\equ}{equation}
\newcommand{\C}{\mathbb{C}}
\newcommand{\N}{\mathbb{N}}
\newcommand{\R}{\mathbb{R}}
\newcommand{\Z}{\mathbb{Z}}
\DeclareMathOperator{\spec}{spec}
\DeclareMathOperator{\vol}{vol} \DeclareMathOperator{\Real}{Re}
\newcommand\eps{\varepsilon}
\let\nhatoksa=\theenumi
\let\nhatoksb=\labelenumi
\let\nhatoksc=\theenumii
\let\nhatoksd=\labelenumii
\newlength{\nhalengtha}
\newlength{\nhalengthb}
\newlength{\nhalengthc}
\newcommand{\resetenum}{
\let\theenumi=\nhatoksa
\let\labelenumi=\nhatoksb
\let\theenumii=\nhatoksc
\let\labelenumii=\nhatoksd
\setlength{\leftmargini}{\nhalengtha}
\setlength{\leftmarginii}{\nhalengthb}
\setlength{\labelwidth}{\nhalengthc}
}
\newcommand\cc{\mathcal{C}}
\newcommand\ce{\mathcal{E}}
\newcommand\cf{\mathcal{F}}
\newcommand\ch{\mathcal{H}}
\newcommand\cj{\mathcal{J}}
\newcommand\ck{\mathcal{K}}
\newcommand\cl{\mathcal{L}}
\newcommand\cm{\mathcal{M}}
\newcommand\cn{\mathcal{N}}
\newcommand\cp{\mathcal{P}}
\newcommand\cq{\mathcal{Q}}
\newcommand\rr{\mathcal{R}}
\newcommand\cs{\mathcal{S}}
\def\mbs{\mathbb{S}}
\def\ig{\textit{g}}
\def\psitil{\widetilde{\psi}}
\def\ehat{\widehat{E}}
\def\ov{\overline}
\def\pa {\partial}
\def\op{\oplus}
\def\De{\Delta}
\def\al{\alpha}
\def\bt{\beta}
\def\be{\beta}
\def\de{\delta}
\def\Ga{\Gamma}
\def\ga{\gamma}
\def\la{\lambda}
\def\lm{\lambda}
\def\La{\Lambda}
\def\om{\omega}
\def\Om{\Omega}
\def\sa{\sigma}
\def\si{\sigma}
\def\vr{\varepsilon}
\def\va{\varphi}
\def\span{\hbox{span}}
\def\vol{\mathrm{vol}}
\def\real{\hbox{Re}}
\def\tce{\widetilde\ce}
\def\te{\widetilde E}
\newcommand{\inp}[2]{\left\langle#1,#2\right\rangle}
\begin{document}

\baselineskip=17pt

\titlerunning{A spinorial analogue of Brezis-Nirenberg}

\title{A spinorial analogue of the Brezis-Nirenberg theorem involving the critical Sobolev exponent}

\author{Thomas Bartsch,\, Tian Xu\footnote{Supported by the National Science Foundation of China (NSFC 11601370, 11771325)
and the Alexander von Humboldt Foundation of Germany}}

\date{}

\maketitle

\begin{abstract}
Let $(M,\ig,\sa)$ be a compact Riemannian spin manifold of dimension $m\geq2$, let $\mbs(M)$ denote the spinor bundle on $M$, and let $D$ be the Atiyah-Singer Dirac operator acting on spinors $\psi:M\to\mbs(M)$. We study the existence of solutions of the nonlinear Dirac equation with critical exponent
\[
  D\psi = \la\psi + f(|\psi|)\psi + |\psi|^{\frac2{m-1}}\psi \tag{NLD}
\]
where $\la\in\R$ and $f(|\psi|)\psi$ is a subcritical nonlinearity in the sense that $f(s)=o\big(s^{\frac2{m-1}}\big)$ as $s\to\infty$. A model nonlinearity is $f(s)=\al s^{p-2}$ with $2<p<\frac{2m}{m-1}$, $\al\in\R$. In particular we study the nonlinear Dirac equation
\[
  D\psi=\la\psi+|\psi|^{\frac2{m-1}}\psi, \quad \la\in\R. \tag{BND}
\]
This equation is a spinorial analogue of the Brezis-Nirenberg problem. As corollary of our main results we obtain the existence of least energy solutions $(\la,\psi)$ of (BND) and (NLD) for every $\la>0$, even if $\la$ is an eigenvalue of $D$. For some classes of nonlinearities $f$ we also obtain solutions of (NLD) for every $\la\in\R$, except for non-positive eigenvalues. If $m\not\equiv3$ (mod 4) we obtain solutions of (NLD) for every $\la\in\R$, except for a finite number of non-positive eigenvalues.  In certain parameter ranges we obtain multiple solutions of (NLD) and (BND), some near the trivial branch, others away from it.

The proofs of our results are based on variational methods using the strongly indefinite energy functional associated to (NLD).\\

\noindent{\bf MSC 2010:} Primary: 53C27; Secondary: 35R01

\keywords{nonlinear Dirac equation; critical Sobolev exponent; strongly indefinite functional; variational methods; bifurcation}


\end{abstract}

\newpage

\tableofcontents

\section{Introduction}
Let $(M,\ig)$ be an $m$-dimensional compact manifold. We assume that $M$ is spin, and we fix a spin structure $\sa$ on $M$. We denote by $\mbs(M)=Spin(TM)\times_\rho\mbs_m$ the spinor bundle on $M$ with hermitian metric $(\cdot,\cdot)$ and compatible spin connection $\nabla^\mbs$. The Clifford multiplication
\[
  TM\otimes \mbs(M)\to\mbs(M)
\]
is denoted by $X\otimes \psi\mapsto X\cdot \psi$. Let $D=D_M$ be the (Atiyah-Singer) Dirac operator defined on $\Ga(\mbs(M))$, i.e.\
$D=\sum_{k=1}^m e_k\cdot\nabla^\mbs_{e_k}$ for a local orthonormal frame $\{e_1,\dots,e_m\}$ of $TM$.

In this paper, we want to find solutions $\psi\in\Ga(\mbs(M))$ of the nonlinear Dirac equation
\begin{\equ}\label{main equ}
  D\psi = \la\psi + f(|\psi|)\psi + |\psi|^{\frac{2}{m-1}}\psi, \tag{NLD}
\end{\equ}
where $f:[0,+\infty)\to\R$ satisfies $f(0)=0$, $f(s)/s^{\frac{2}{m-1}}\to0$ as $s\to\infty$. In particular, $f(s)$ grows subcritically for $s\to\infty$. An important special case is $f\equiv0$. The exponent $2^*:=\frac{2m}{m-1}=2+\frac{2}{m-1}$ is critical here because the form domain $H^{\frac12}(M,\mbs(M))$ of the Dirac operator embeds into $L^q(M,\mbs(M))$ for $1\le q\le2^*$, and the embedding is compact precisely if $q<2^*$. This is closely related to the fact that on $\R^m$ with Dirac operator $D_{\R^m}$ the equation
\[
  D_{\R^m}\psi=|\psi|^{2^*-2}\psi
\]
has a family $\psi_\eps$ of solutions given by
\[
  \psi_\eps(x)=\eps^{2^*-2}\psi_1(\eps x)\quad\text{with}\quad \psi_1(x)=\frac{1}{(1+|x|^2)^{(m-1)/2}}(1-x)\cdot\psi_0
\]
where $\psi_0\in\mbs_m$ is a spinor with $|\psi_0|=\frac{m^{(m-1)/2}}{\sqrt{2}}$; here the dot ``$\cdot$" denotes the Clifford multiplication of an element of the Clifford algebra $Cl(\R^m)$ and a spinor.

Nonlinear Dirac equations on space-time $\R^4$ with various types of nonlinearities have been investigated in \cite{Esteban-Sere:1995} or \cite{Bartsch-Ding:2006b}, for instance. We refer the reader to the surveys \cite{Esteban-Sere:2002, Esteban-Lewin-Sere:2008} for references to the literature. Results about nonlinear Dirac equations on spin manifolds, motivated by geometry or physics, can be found in \cite{Ammann, AGHM, AHM, Branding:2016, Branding:2019, Chen-Jost-Wang:2008,  Grosse:2012,Isobe:2011, Isobe:2013, Isobe:2015, Isobe:2017-1, Isobe:2017-2, Raulot:2009}. 

Concerning problem \eqref{main equ} on a compact spin manifold the case $\la=0$ and $f=0$ is of particular geometric relevance, called the spinorial Yamabe-type equation. A nontrivial solution $\psi_0$ of
\begin{equation}\label{eq:geom}
  D\psi_0 = |\psi_0|^{2^*-2}\psi_0
\end{equation}
leads to a generalized metric $\ig_0=\left(\frac{|\psi_0|}{\|\psi_0\|_{2^*}}\right)^{4/(m-1)}\ig$ in the conformal class $[\ig]$ of $\ig$. It is a metric if $|\psi_0|>0$. There holds $\vol_{\ig_0}(M)=1$ and the {\it B\"ar-Hijazi-Lott invariant}
\[
  \la_{\min}^+(M,[\ig],\sa) = \inf_{\widetilde\ig\in[\ig]}\la_1^+(\widetilde\ig)\vol_{\widetilde\ig}(M)^{\frac1m}
\]
is achieved by $\ig_0$. Here $\la_1^+(\widetilde\ig)$ is the smallest positive eigenvalue of the associated Dirac operator $D_{\widetilde\ig}$ on $(M,\widetilde\ig,\si)$. Equivalently, the functional
\[
  J(\psi) = \frac{\left(\int_M|D\psi|^{\frac{2m}{m+1}}\,d\vol_\ig\right)^{\frac{m+1}{m}}}{\int_M(D\psi,\psi)\,d\vol_\ig}
\]
achieves its infimum at $\psi_0$ and $J(\psi_0) = \la_1^+(\ig_0) = \la_{\min}^+(M,[\ig],\sa)$ where the infimum is taken over the set $\Ga^+(\mbs(M))$ of all smooth spinor fields with $\int_M((D\psi,\psi)\,d\vol_\ig > 0$.

In \cite{Ammann2003, AGHM, AHM} it was shown that
\begin{\equ}\label{spinorial Yamabe ineq}
  \la_{\min}^+(M,[\ig],\si) \le \la_{\min}^+(S^m) = \frac{m}2\om_m^{\frac1m}
\end{\equ}
where $\la_{\min}^+(S^m)=\la_{\min}^+(S^m,[\ig_{S^m}],\si)$ denotes the B\"ar-Hijazi-Lott invariant for the standard sphere $S^m$ equipped with the canonical metric and the unique spin structure, and $\om_m$ stands for the volume of $S^m$. It is known that $\la_{\min}^+(S^m)=\frac{m}{2}\om_m^{1/m}$ is achieved. Moreover, $\la_{\min}^+(M,[\ig],\si)$ is achieved if the strict inequality in \eqref{spinorial Yamabe ineq} holds. We refer the reader to \cite{Ammann,AGHM,Bar,Ginoux,Hijazi,Lott} for these results.

The spinorial analogue of the Brezis-Nirenberg equation
\begin{equation}\label{eq:BND}
  D\psi = \la\psi + |\psi|^{2^*-2}\psi \tag{BND}
\end{equation}
has been treated by Isobe \cite{Isobe:2011}. The energy functional
\begin{equation}\label{ce lm}
  \ce_\la(\psi) = \frac12\int_M(D\psi,\psi)d\vol_\ig - \frac\la2\int_M|\psi|^2d\vol_\ig - \frac1{2^*}\int_M|\psi|^{2^*}d\vol_\ig.
\end{equation}
associated to \eqref{eq:BND} is strongly indefinite because the spectrum $\spec(D)$ consists of an infinite sequence of eigenvalues \
$\ldots<\la_{-1}<\la_0\le0<\la_1<\la_2<\dots$ \ with $|\la_k|\to\infty$ as $|k|\to\infty$. Consequently, a critical point of $\ce_\la$ has infinite Morse index and infinite co-index. In order to avoid this indefiniteness Isobe used a dual variational principle. Then he could apply the classical mountain pass theorem provided $m\ge4$, $\la\notin\spec(D)$, and $\la>0$.

In the present paper we deal with the more general equation \eqref{main equ} and present a different variational approach that works in all dimensions $m\ge2$ and also when $\la\in\spec(D)$. We do not use a dual functional but work instead with the strongly indefinite functional
\begin{equation}\label{the functional}
  \cl_\la(\psi)
   = \frac12\int_M(D\psi,\psi)d\vol_\ig - \frac\la2\int_M|\psi|^2d\vol_\ig - \int_M F(|\psi|)d\vol_\ig - \frac1{2^*}\int_M|\psi|^{2^*}d\vol_\ig
\end{equation}
where $F(s):=\int_0^s f(t)t\,dt$. Equation \eqref{main equ} is the Euler-Lagrange equation associated to \eqref{the functional}. We show that $\cl_\la$ satisfies the Palais-Smale condition below a critical value $\ga_{crit}$. Then we minimize $\cl_\la$ on the Nehari-Pankov manifold $\cp_\la$ if $\la_{k-1}<\la<\la_k$. In order to prove $\ga(\la):=\inf_{\cp_\la}\cl_\la<\ga_{crit}$ we construct suitable test spinors $\bar\va_\eps$ using the Bourgignon-Gauduchon trivialization. Since these do not lie on $\cp_\la$ we have to find modifications of the test spinors that lie on the Nehari-Pankov manifold, and we have to control the energy of these modifications. This is the main technical difficulty that we have to overcome, in particular when $\la\in\spec(D)$. In that case we use a new idea, replacing the test spinors $\bar\va_\eps$ by $\bar\va_\eps-T(\bar\va_\eps)$ where $T(\psi)\in\ker(D-\la)$ is the nearest neighbor of $\psi$ in $\ker(D-\la)$ with respect to the $L^{2^*}$ norm. Observe that $T$ is a nonlinear projection.

The minimization argument yields a least energy spinor $\psi_\la$ solving \eqref{main equ} provided $\la\in\spec(D)\setminus\{\la_k:k\le0\}$. For $k\ge1$ and $\la<\la_k$ close to $\la_k$ we obtain a second solution $\psitil_\la$ using a new min-max scheme. Essentially, we minimize $\cl_\la$ on a suitably constructed submanifold of the Nehari-Pankov manifold $\cp_\la$ of codimension $d_k:=\dim\ker(D-\la_k)$. The energy of the solution $\psitil_\la$ satisfies $\cl_\la(\psitil_\la)\to\cl_{\la_k}(\psi_{\la_k})>0$ as $\la\nearrow\la_k$, hence it may be considered as a continuation of the least energy solution $\psi_\la$, $\la_k\le\la<\la_{k+1}$. Clearly $\psitil_\la$ differs from $\psi_\la$ because $\cl_\la(\psi_\la)\to0$ as $\la\nearrow\la_k$.
We would like to mention that standard bifurcation theory for potential operators yields $d_k$ pairs of spinors $\pm\psi_{\la,j}$ solving \eqref{main equ} for $\la<\la_k$ close to $\la_k$. The solution $\psitil_\la$ cannot be obtained in this way. The construction of $\psitil_\la$ is new and can be generalized to other parameter-dependent variational problems.

As a last result we provide a uniform bound $\nu>0$ such that the solutions that bifurcate from $\la_k$ continue to exist for $\la\in(\la_k-\nu,\la_k)$. The Weyl formula for the Dirac operator implies that the number of  solutions of \eqref{main equ} and \eqref{eq:BND} becomes arbitrarily large as $|\la|\to\infty$, provided $m\not\equiv3$ (mod 4).

Setting
\[
  \cs_k:=\{(\la,\cl_\la(\psi_\la)):\la_{k-1}\le\la<\la_k\} \cup \{(\la,\cl_\la(\psitil_\la)):\la<\la_{k-1}\text{ close to $\la_{k-1}$}\}
\]
we can visualize the energy branches bifurcating from $\la_k$ and $\la_{k+1}$ in Figure~1.

\begin{figure}[ht]
 \centering
\begin{tikzpicture}[scale=1.3]
\draw[thick,->,>=stealth,line width=0.2ex] (-1,0) -- (9.5,0); 
\draw[thick,->,>=stealth,line width=0.2ex] (0,0) -- (0,3.5);
\draw[dashed,line width=0.25ex] (-1,2.3) -- (9.5,2.3); 
\draw[loosely dotted,line width=0.25ex] (6,0) -- (6,2.3); 
\coordinate [label=135: $\ga_{crit}$] (cc) at (0,2.3);
\draw (cc);
\coordinate [label=-90:$0$] (o) at (0,-0.15);
\draw (o);
\coordinate [label=-90:$\cdots$] (z) at (2,-0.15);
\draw (z);
\coordinate [label=-90:$ \lm_{k}-\nu$] (a) at (4.5,-0.1); 
\draw (a);
\draw[loosely dotted,line width=0.25ex] (4.5,0) -- (4.5,2.3);
\coordinate [label=-90:$ \lm_{k}$] (b) at (6,-0.1); 
\draw (b);
\coordinate [label=-90:$ \lm_{k+1}$] (c) at (8,-0.1); 
\draw (c);
\coordinate [label=-90:$\cdots$] (zz) at (9,-0.15); 
\draw (zz);
\coordinate [label=0:$\lm$] (e) at (9.5,0); 
\draw (e);
\coordinate [label=90:$\text{Energy}$] (f) at (0,3.5);
\draw (f);
\foreach \p in {a, b, c} \draw (\p)++(0,0.1) circle (2pt); 
\foreach \p in {cc} \fill (\p) circle (2pt);
\draw[style=dotted,line width=1.5pt] (3.6,1.2) -- (3.8, 1.1); 
\draw[line width=1.5pt]  (3.8,1.1) .. controls (4,1) 
.. (4.6,0.6) .. controls (5.1,0.3) and (5.6,0.2) .. (6, 0);
\draw[style=dotted,line width=1.5pt] (3.6,2.1) -- (3.8, 2.05); 
\draw[line width=1.5pt]  (3.8,2.05) .. controls (4,2) 
.. (4.6,1.85) .. controls (5.1,1.5) and (5.6,0.8) .. (6, 0);
\coordinate [label=-180: $\cs_{k}$] (sk) at (3.7,1.1);
\draw (sk);
\draw[style=dotted,line width=1.5pt] (5.3,2.1) -- (5.5, 1.95); 
\draw[line width=1.5pt] (5.5, 1.95) .. controls (5.9, 1.85) .. (6,1.8); 
\draw[line width=1.5pt] (6,1.8) .. controls (6.6,1.5) 
.. (6.8,1.3) .. controls (7,1.1)
.. (7.2,0.8) .. controls (7.5,0.3) and (7.7,0.1) .. (8, 0);
\coordinate [label=-180: $\cs_{k+1}$] (skk) at (8,1.1); 
\draw (skk);
\end{tikzpicture}
 \caption{\it Least energy branches $\cs_k,\cs_{k+1}$ plus one high energy branch bifurcating from $\la_k$}
\end{figure}
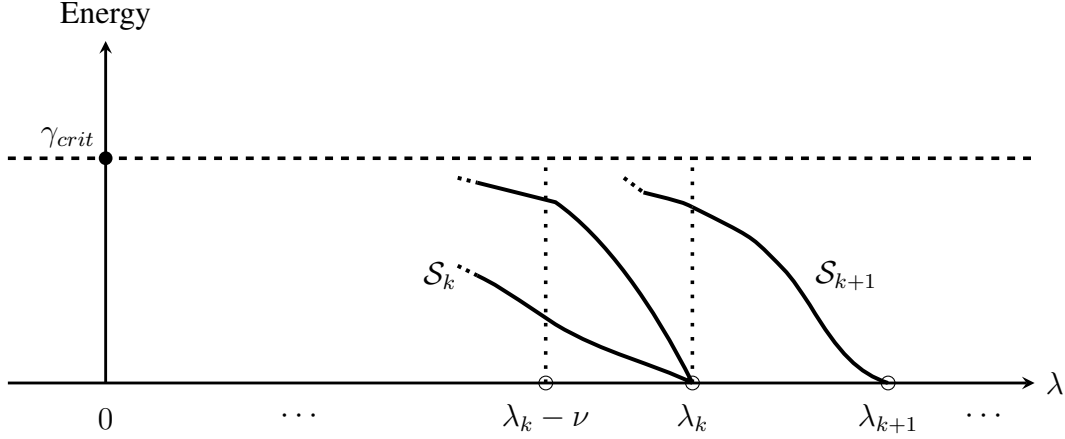

\section{Statement of the main results}
Let $(M,\ig,\si)$ be a compact spin manifold of dimension $m\geq2$.  The spectrum $\spec(D)=\{\la_k:k\in\Z\}$ consists only of eigenvalues with finite multiplicity which may be ordered as follows: \ $\ldots<\la_{-1}<\la_0\le0<\la_1<\la_2<\dots$;  \ $|\la_k|\to\infty$ as $|k|\to\infty$. We consider the following assumptions on the nonlinearity $f$; recall $F(s)=\int_0^s f(t)t\,dt$ .
\begin{itemize}
  \item[$(f_1)$] $f:[0,\infty)\to[0,\infty)$ is continuous and satisfies $f(0)=0$.
  \item[$(f_2)$] $\displaystyle \frac{f(s)}{s^\frac2{m-1}}\to0$
  as $s\to+\infty$.

  \item[$(f_3)$] The function $\displaystyle s\mapsto
  f(s)+s^\frac2{m-1}$ is strictly increasing.
  \item[$(f_4)$] $\displaystyle \frac{f(s)s}{F(s)^{\frac{m+1}{2m}}}\to0$ as $s\to+\infty$.
  \item[$(f_5)$] $\displaystyle \lim_{\vr\to0^+}\frac{\vr^{m-1}}{|\ln\vr|^{\max\{3-m,\,0\}}}
        \int_0^{\frac1\vr}F\left( \frac{\vr^{-\frac{m-1}2}}{(1+r^2)^{\frac{m-1}2}} \right) r^{m-1}dr=\infty$.
\end{itemize}

Now we can state our first main theorem.

\begin{Thm}\label{main thm}
\begin{itemize}
  \item[\rm a)] If $(f_1)$, $(f_2)$ and $(f_3)$ hold then \eqref{main equ} has a least energy solution $\psi_\la$ for every $\la>0$. The function
   \[
     \R^+ \to \R^+,\quad \la\mapsto \cl_\la(\psi_\la),
   \]
   is continuous and nonincreasing on each interval $[\la_k,\la_{k+1})$, $k\ge1$, and on $(0,\la_1)$. Moreover, $\psi_\la\to0$ hence $\cl_\la(\psi_\la)\searrow 0$ as $\la\nearrow\la_k$.

  \item[\rm b)] If $f\in \cc^1$, $f'(s)>0$ and $(f_1)$, $(f_4)$ and $(f_5)$ hold then \eqref{main equ} has a least energy solution for every $\la\in\R\setminus \{\lm_k:\, k\leq0\}$. The function
   \[
     \R\setminus\{\la_k:k\le0\} \to \R^+,\quad \la\mapsto \cl_\la(\psi_\la)
   \]
   is continuous and nonincreasing on each interval $[\la_{k-1},\la_k)$, if $k\ge2$, respectively on $(\la_{k-1},\la_k)$, if $k\le1$. Moreover, $\psi_\la\to0$ hence $\cl_\la(\psi_\la)\searrow 0$ as $\la\nearrow\la_k$.
\end{itemize}
\end{Thm}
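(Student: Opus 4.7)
My strategy is to realize $\psi_\la$ as a minimizer of the strongly indefinite functional $\cl_\la$ over the Nehari-Pankov manifold $\cp_\la$ sketched in the introduction. Under $(f_3)$, and a fortiori under the $f'>0$ hypothesis of part b), the map $s\mapsto f(s)+s^{2/(m-1)}$ is strictly increasing, which renders $\cp_\la$ a $\cc^1$ natural constraint: each admissible ray from a nonzero positive-mode $\psi$ into the sum of the negative and kernel subspaces of $D-\la$ meets $\cp_\la$ in a single point where $\cl_\la$ attains its fiberwise maximum, so that a critical point of $\cl_\la|_{\cp_\la}$ is a free critical point of $\cl_\la$. Below an explicit threshold $\ga_{crit}$ coming from the optimal Sobolev constant for the Euclidean Dirac operator $D_{\R^m}$, $\cl_\la$ satisfies a local Palais-Smale condition. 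Hence it suffices to produce a minimizing sequence in $\cp_\la$ of energy strictly below $\ga_{crit}$, and the whole scheme hinges on the inequality $\ga(\la):=\inf_{\cp_\la}\cl_\la<\ga_{crit}$.

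To verify the strict bound I will transport the Euclidean extremal family $\psi_\eps$ through the Bourgignon-Gauduchon trivialization at a point $p\in M$, multiply by a cutoff, and work with the resulting test spinor $\bar\va_\eps$. A Brezis-Nirenberg type expansion of $\cl_\la$ along the Nehari-Pankov fiber through $\bar\va_\eps$ shows $\ga(\la)<\ga_{crit}$ as soon as the negative contribution $-\frac{\la}{2}\|\bar\va_\eps\|_2^2-\int_M F(|\bar\va_\eps|)\,d\vol_\ig$ dominates the curvature remainder as $\eps\to0$. For $\la>0$ the mass term $-\frac{\la}{2}\|\bar\va_\eps\|_2^2$ alone already wins, which is all part a) needs and accounts for the mild hypotheses $(f_1)$--$(f_3)$. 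For $\la\le0$ the mass term has the wrong sign and the $F$ contribution has to take over: $(f_5)$, combined with the subcritical normalization $(f_4)$, is calibrated precisely so that $\int_M F(|\bar\va_\eps|)\,d\vol_\ig$ beats the curvature error in every dimension $m\ge2$, which is why the stronger hypotheses appear in part b).

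The principal obstacle, and the step I expect to take the most effort, is the case $\la\in\spec(D)$, which arises at the positive eigenvalues $\la_k$, $k\ge1$, included in both parts. Then $\bar\va_\eps$ typically has a component in $\ker(D-\la)$ along which the quadratic part of $\cl_\la$ is flat, and the naive projection onto $\cp_\la$ collapses. Following the idea announced in the introduction, my plan is to replace $\bar\va_\eps$ by $\bar\va_\eps-T(\bar\va_\eps)$, where $T(\psi)\in\ker(D-\la)$ denotes the $L^{2^*}$ nearest neighbor of $\psi$. What must be established is that $T$ is well defined and continuous on $L^{2^*}$ and that $\|T(\bar\va_\eps)\|_{2^*}=o(\|\bar\va_\eps\|_{2^*})$ as $\eps\to0$, so that the critical nonlinear contribution $\|\bar\va_\eps-T(\bar\va_\eps)\|_{2^*}^{2^*}$ is not depleted and the expansion of the previous paragraph survives intact. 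Quantifying the nonlinear projection $T$ is the technical core of the argument.

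The remaining assertions follow more softly. Since $\cl_\la(\psi)$ is pointwise decreasing in $\la$ for every fixed $\psi\ne0$ and the Nehari-Pankov fibers depend continuously on $\la$ inside each admissible interval, $\la\mapsto\ga(\la)$ is nonincreasing; continuity comes from upper and lower semicontinuity arguments using a common minimizing family. To prove $\psi_\la\to0$ as $\la\nearrow\la_k$, I will test $\ga(\la)$ with small-amplitude elements of $\ker(D-\la_k)$ rescaled onto $\cp_\la$; the Nehari-Pankov condition forces the scaling parameter to be of order $(\la_k-\la)^{(m-1)/2}$, so the energy is of order $(\la_k-\la)^m$, giving $\ga(\la)\searrow0$. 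The local Palais-Smale property then forces $\psi_\la$ itself to converge in $H^{1/2}$ to the unique critical point at level zero, namely $0$.
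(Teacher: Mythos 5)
Your overall architecture matches the paper's: minimize $\cl_\la$ on the Nehari--Pankov manifold $\cp_\la$ (Section~5, using Szulkin--Weth), establish the Palais--Smale condition below $\ga_{crit}=\frac{1}{2m}(\tfrac m2)^m\om_m$ (Section~4), transplant the Euclidean extremals via Bourguignon--Gauduchon and cutoff (Section~6), and show $\inf\cm_\la<\ga_{crit}$ with the mass term winning for $\la>0$ and the $(f_5)$-controlled $F$-term winning for $\la\leq0$. Your sketch of the monotonicity/continuity and the rescaling argument for $\psi_\la\to0$ as $\la\nearrow\la_k$ (scaling $\sim(\la_k-\la)^{(m-1)/2}$, energy $\sim(\la_k-\la)^m$) is consistent with Lemma~\ref{continuation lemma1} and Lemma~\ref{lem:continuation}.

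However, there is a genuine gap in the treatment of $\la=\la_k\in\spec(D)$, which is the crux of the improvement over Isobe. Your stated plan --- replace the test spinor $\bar\va_\eps$ by $\bar\va_\eps - T(\bar\va_\eps)$ and verify $|T(\bar\va_\eps)|_{2^*}=o(|\bar\va_\eps|_{2^*})$ --- does not by itself solve the problem. Since $T(\bar\va_\eps)\in E_\la^0$, the positive component $(\bar\va_\eps - T(\bar\va_\eps))^+=\bar\va_\eps^+$ is unchanged, so the Nehari--Pankov fiber $\ehat_\la(\phi_\eps)=\R^+\phi_\eps\oplus E_\la^0\oplus E_\la^-$ through the modified test spinor is identical to the one through $\bar\va_\eps$. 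The estimate of $\max_{\ehat_\la(\phi_\eps)}\cl_\la$ therefore still ranges over arbitrary kernel components, and merely shrinking the test spinor's own $E_\la^0$-part buys you nothing. What the paper actually does (Section~7.1) is different in a decisive way: it introduces the \emph{majorant} functional $\tce_\la(\psi)=\ce_\la(\psi-T_\la(\psi))$, observes that $\cl_\la\le\ce_\la\le\tce_\la$ \emph{and} that $\tce_\la(\psi+\phi)=\tce_\la(\psi)$ for all $\phi\in E_\la^0$ (Lemma~\ref{lem:prop-T}~a)), so that the fiber maximum of $\tce_\la$ collapses to a maximum over $\R^+\phi_\eps\oplus E_\la^-$ only. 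One then builds a separate Nehari--Pankov scheme for $\tce_\la$ on $\te_\la=E_\la^+\oplus E_\la^-$ (Proposition~\ref{reduction-prop}), exploits convexity of $\psi\mapsto|\psi-T_\la(\psi)|_{2^*}^{2^*}$ (Lemma~\ref{lem:prop-T}~d)) for the inequality~\eqref{Tmfm-1}, and finally relates the reduced fiber maximum to the raw test-spinor energy via a quantitative Palais--Smale--type estimate, Corollary~\ref{PS-esti-key}. Your estimate $|T(\bar\va_\eps)|_{2^*}=o(|\bar\va_\eps|_{2^*})$ does reappear in the paper (cf.~\eqref{l-infty-norm} and~\eqref{test-T-1}) but only as one ingredient among several in verifying that $(\bar\va_\eps-T_\la(\bar\va_\eps))_\eps$ is a Palais--Smale sequence for $\tce_\la$ with small derivative. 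To make your argument complete you need this passage from $\cl_\la$ to the $E_\la^0$-invariant majorant $\tce_\la$ and the accompanying Nehari--Pankov analysis of $\tce_\la$; without it, the fiber over the kernel direction is uncontrolled.
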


\begin{Rem}\label{rem:nonlinearity}
%

  a) Assumptions $(f_1)$ and $(f_4)$ imply:
  \[
    \text{for every $\eps>0$ there exists $C_\eps>0$ such that $F(s) \le C_\eps+\eps s^{2^*}$ and $f(s) \le C_\eps+\eps s^{2^*-2}$.}
  \]
  This allows that $F(s)$ grows almost critically as $s\to\infty$. It also implies $(f_2)$, hence the assumptions of Theorem~\ref{main thm}~b) imply $(f_1)-(f_3)$. For later use we observe that $(f_1)-(f_3)$ imply that the functions $g(s):=f(s)+s^{\frac{2}{m-1}}=f(s)+s^{2^*-2}$ and $G(s):=\int_0^s g(t)t\,dt=F(s)+\frac1{2^*}s^{2^*}$ have the following properties:
\begin{itemize}
\item[(i)] $\R^+_0\to\R^+_0$, $s\mapsto g(s)s^2-2G(s)$ is strictly increasing.
\item[(ii)] For every $s_0>0$ there exists $c_0>0$ such that $g(s)s^2-2G(s)\geq c_0s^2$ for all $s\geq s_0$.
\end{itemize}

  b) The function $F(s) = \al s^p$ with $\al>0$ and $p\in(2,2^*)$ satisfies all conditions from Theorem~\ref{main thm}~b). The same is true for the function $F(s)=\frac{\al s^{2^*}}{\ln(1+s^q)}$ provided $\al>0$ and $q\in(0,\frac2{m-1}]$.

  c) It is a challenging open problem whether \eqref{main equ} has a least energy solution for $\la\le0$ in the situation of Theorem~\ref{main thm}~a), or for $\la=\la_k\le0$ in the situation of Theorem~\ref{main thm}~b). This may depend on an intricate combination of conditions on the geometry of $(M,g,\si)$ and properties of $f$.
\end{Rem}

As a consequence of Theorem~\ref{main thm} we obtain the following corollaries.

\begin{Cor}\label{cor1}
  The spinorial Brezis-Nirenberg equation \eqref{eq:BND} has a least energy solution for every $\la>0$.
\end{Cor}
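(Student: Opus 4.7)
The plan is to derive Corollary \ref{cor1} as an immediate consequence of Theorem \ref{main thm}(a) by taking the nonlinearity $f \equiv 0$. The spinorial Brezis-Nirenberg equation \eqref{eq:BND} is precisely the special case of \eqref{main equ} obtained by setting $f \equiv 0$, so it suffices to verify that the zero function satisfies hypotheses $(f_1)$, $(f_2)$ and $(f_3)$.

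First I would check the three conditions. $(f_1)$ is immediate: the zero function is continuous, nonnegative, and vanishes at $0$. $(f_2)$ is also trivial because $f(s)/s^{2/(m-1)} \equiv 0$ for $s>0$. For $(f_3)$, the map $s \mapsto f(s) + s^{2/(m-1)} = s^{2/(m-1)}$ is strictly increasing on $[0,\infty)$ since the exponent $2/(m-1)>0$ (recall $m \geq 2$).

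With these verifications in hand, Theorem \ref{main thm}(a) applies and yields, for every $\lambda>0$, a least energy solution $\psi_\lambda$ of \eqref{main equ} with $f \equiv 0$, which is exactly a least energy solution of \eqref{eq:BND}. There is essentially no obstacle here; the work was entirely absorbed into Theorem \ref{main thm}(a), and the only thing to confirm is that the pure critical problem is covered by the hypotheses we imposed in part (a) rather than requiring part (b). Since $f \equiv 0$ does not satisfy the strict positivity $f'(s)>0$ needed in part (b), it is precisely part (a) that we must invoke, and this indeed covers the regime $\lambda > 0$ as claimed in the corollary.
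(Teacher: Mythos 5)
Your proposal is correct and is essentially the same as the paper's intended argument: the corollary is stated as an immediate consequence of Theorem~\ref{main thm}, and the relevant case is $f\equiv0$, which (as you verify) satisfies $(f_1)$--$(f_3)$, so Theorem~\ref{main thm}~a) applies directly and gives a least energy solution of \eqref{eq:BND} for every $\la>0$. Your remark that one must invoke part~a) rather than part~b) (since $f'\equiv0$ fails the strict monotonicity in part~b)) is exactly the right observation and explains why the corollary is stated only for $\la>0$.
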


This improves the result from \cite{Isobe:2011} who could only treat dimensions $m\ge4$ and required $\la\notin\spec(D)$.

\begin{Cor}\label{cor2}
  The equation
  \begin{equation}\label{eq:BNp}
    D\psi = \la\psi + \al|\psi|^{p-2}\psi + |\psi|^{2^*-2}\psi
  \end{equation}
  has a least energy solution for every
  $\la\in\R\setminus \{\lm_k:\, k\leq0\}$, $\al>0$, $2<p<2^*$.
\end{Cor}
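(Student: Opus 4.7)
The plan is to deduce Corollary~\ref{cor2} directly from Theorem~\ref{main thm}\,b) by verifying that the nonlinearity $f(s)=\al s^{p-2}$, $\al>0$, $2<p<2^*$, satisfies all the hypotheses of that theorem. Since the subcritical perturbation in \eqref{eq:BNp} is exactly of this form, once the hypotheses are checked the conclusion follows verbatim from the theorem.

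First I would note the regularity and sign conditions. The function $f(s)=\al s^{p-2}$ is continuous on $[0,\infty)$, satisfies $f(0)=0$ and maps $[0,\infty)$ to $[0,\infty)$, so $(f_1)$ holds; it is $\cc^1$ on $(0,\infty)$ with $f'(s)=\al(p-2)s^{p-3}>0$ because $\al>0$ and $p>2$ (the behaviour at $s=0$ is only used through $f(0)=0$, which is part of $(f_1)$). Next I would compute $F(s)=\int_0^s f(t)t\,dt=\frac{\al}{p}s^p$ and verify $(f_4)$ by a direct exponent count:
\[
  \frac{f(s)s}{F(s)^{\frac{m+1}{2m}}}
  =\frac{\al\, s^{p-1}}{\bigl(\tfrac{\al}{p}\bigr)^{\frac{m+1}{2m}} s^{\frac{p(m+1)}{2m}}}
  =C\, s^{p-1-\frac{p(m+1)}{2m}}
  =C\, s^{\frac{p(m-1)}{2m}-1},
\]
and since $p<2^*=\frac{2m}{m-1}$ the exponent is strictly negative, so the ratio tends to $0$ as $s\to\infty$.

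The main computation is the verification of $(f_5)$. Substituting $F(s)=\tfrac{\al}{p}s^p$, the integral becomes
\[
  \int_0^{\frac1\vr} F\!\left(\tfrac{\vr^{-(m-1)/2}}{(1+r^2)^{(m-1)/2}}\right) r^{m-1}\,dr
  =\frac{\al}{p}\,\vr^{-\frac{p(m-1)}{2}}\int_0^{\frac1\vr}\frac{r^{m-1}}{(1+r^2)^{\frac{p(m-1)}{2}}}\,dr .
\]
Since the integrand decays like $r^{(1-p)(m-1)}$ at infinity with $(1-p)(m-1)<-1$ (because $p>2$ and $m\ge2$), the $r$-integral converges to a finite positive constant as $\vr\to0^+$. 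Multiplying by the prefactor $\vr^{m-1}/|\ln\vr|^{\max\{3-m,0\}}$ we get, up to constants, $\vr^{(m-1)(1-p/2)}/|\ln\vr|^{\max\{3-m,0\}}$. The exponent $(m-1)(1-p/2)$ is strictly negative because $p>2$, so this quantity tends to $+\infty$ (power beats logarithm in the case $m=2$, and the logarithmic factor is absent for $m\ge3$), giving $(f_5)$.

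With $(f_1)$, $(f_4)$, $(f_5)$, the $\cc^1$ regularity, and the strict positivity of $f'$ all verified, Theorem~\ref{main thm}\,b) applies and yields a least energy solution of \eqref{eq:BNp} for every $\la\in\R\setminus\{\la_k:k\le0\}$. I do not expect any serious obstacle here; the only subtlety worth flagging is the case $m=2$ where the logarithmic factor in $(f_5)$ must be handled, but the polynomial blow-up in $\vr$ dominates it comfortably.
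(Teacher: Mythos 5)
Your proposal is correct and takes exactly the approach the paper intends: the paper itself does not give a separate proof of Corollary~\ref{cor2} but states in Remark~\ref{rem:nonlinearity}\,b) that $F(s)=\al s^p$ with $\al>0$, $2<p<2^*$ satisfies all hypotheses of Theorem~\ref{main thm}\,b), from which the corollary follows immediately. Your explicit verifications of $(f_1)$, $(f_4)$, and $(f_5)$, including the exponent computations and the convergence of the $r$-integral, are accurate, and you correctly note the harmless technicality that $f(s)=\al s^{p-2}$ is only $\cc^1$ away from $s=0$ when $p<3$ (a point the paper glosses over).
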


Now we turn to the existence of high energy solutions.

\begin{Thm}\label{thm:mult}
  Suppose the hypotheses of Theorem 2.1 a) or b) hold. Then for $k\ge1$ there exists $a_k\in(\la_{k-1},\la_k)$ such that \eqref{main equ} has for $\la\in(a_k,\la_k)$ a solution $\psitil_\la$ so that the map
  \[
    (a_k,\la_{k+1})\to\R^+,\quad
    \la \mapsto \begin{cases} \cl_\la(\psitil_\la)&\text{if }\la<\la_k\\ \cl_\la(\psi_\la)&\text{if }\la\ge\la_k \end{cases}
  \]
  is continuous and non-increasing.
\end{Thm}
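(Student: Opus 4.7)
The plan is to construct for each $\la$ slightly below $\la_k$ a codimension-$d_k$ submanifold $\widetilde\cp_\la\subset\cp_\la$, minimize $\cl_\la$ on it, and match the resulting value with the least-energy branch at $\la=\la_k$. Set $V_k:=\ker(D-\la_k)$. For $\la\in(\la_{k-1},\la_k)$ one has $V_k\subset E^+_\la$, where $E^+_\la$ is the positive spectral subspace of $D-\la$; I would then define
\[
\widetilde\cp_\la := \{\psi\in\cp_\la : \cl_\la'(\psi)v = 0 \text{ for all } v\in V_k\}.
\]
Equivalently, $\widetilde\cp_\la$ is the Nehari--Pankov manifold associated with the shifted spectral splitting $H^{\frac12}(M,\mbs(M)) = W_\la^+\oplus(V_k\oplus E^-_\la)$, where $W_\la^+=V_k^\perp\cap E^+_\la$. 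In this form $\widetilde\cp_\la$ is a smooth codimension-$d_k$ submanifold of $\cp_\la$, and the standard Pankov-type Lagrange multiplier argument shows that critical points of $\cl_\la|_{\widetilde\cp_\la}$ are automatically critical points of the unconstrained $\cl_\la$.

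Next I would study $\widetilde\ga(\la):=\inf_{\widetilde\cp_\la}\cl_\la$. The observation that makes the argument work is that at $\la=\la_k$ the kernel $V_k$ lies in $E^-_{\la_k}$ (attaching kernels to the negative subspace at a spectral point), so $\widetilde\cp_{\la_k}=\cp_{\la_k}$ and $\widetilde\ga(\la_k)=\cl_{\la_k}(\psi_{\la_k})$. By a deformation/continuity argument along $\la$ I would show that for every $\de>0$ there is $a_k<\la_k$ with $\widetilde\ga(\la)\le\cl_{\la_k}(\psi_{\la_k})+\de$ for $\la\in(a_k,\la_k)$. Since $\cl_{\la_k}(\psi_{\la_k})<\ga_{crit}$ (it is attained), choosing $\de$ small places $\widetilde\ga(\la)$ strictly below $\ga_{crit}$, so the Palais--Smale condition already established for $\cl_\la$ produces a minimizer $\widetilde\psi_\la$ of $\cl_\la|_{\widetilde\cp_\la}$, and by the previous step $\widetilde\psi_\la$ solves \eqref{main equ}. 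It is distinct from $\psi_\la$: orthogonality to $V_k$ forces a uniform spectral gap estimate $\widetilde\ga(\la)\ge c_k>0$, while $\cl_\la(\psi_\la)\searrow 0$ as $\la\nearrow\la_k$ by Theorem~\ref{main thm}.

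The assembled map on $(a_k,\la_{k+1})$ is treated as follows. Since $\partial_\la\cl_\la(\psi) = -\frac12\|\psi\|^2_{L^2}\le 0$ for any fixed $\psi$, combining this with the variational characterizations of $\psi_\la$ and $\widetilde\psi_\la$ yields non-increasingness of $\la\mapsto\cl_\la(\widetilde\psi_\la)$ on $(a_k,\la_k)$ and of $\la\mapsto\cl_\la(\psi_\la)$ on $[\la_k,\la_{k+1})$. The limit $\lim_{\la\nearrow\la_k}\cl_\la(\widetilde\psi_\la)=\cl_{\la_k}(\psi_{\la_k})$ obtained in the previous paragraph matches the right-limit at $\la_k$ from Theorem~\ref{main thm}, yielding continuity across $\la_k$; continuity inside each interval follows by standard stability of variational critical levels.

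The hard part will be the upper bound $\widetilde\ga(\la)\le\cl_{\la_k}(\psi_{\la_k})+o(1)$ as $\la\nearrow\la_k$. The natural test element $\psi_{\la_k}$ is not on $\widetilde\cp_\la$ for $\la<\la_k$ because $V_k\not\subset E^-_\la$. The remedy is to subtract from $\psi_{\la_k}$ a nonlinear $V_k$-correction of the same flavor as the $L^{2^*}$-nearest-neighbor projection $T$ already used in the test-spinor construction of Theorem~\ref{main thm}, then to reproject onto the shifted Nehari--Pankov manifold, and to control both errors simultaneously. Verifying that these corrections are compatible and that the corresponding Lagrange multipliers indeed vanish in the shifted Pankov framework is where the real technical work lies.
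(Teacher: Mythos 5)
Your overall geometry is the paper's: your shifted splitting $H^{\frac12}=W_\la^+\oplus(V_k\oplus E_\la^-)$, with $W_\la^+=V_k^\perp\cap E_\la^+$, is exactly $E_{\la_k}^+\oplus(E_{\la_k}^0\oplus E_{\la_k}^-)$, so your $\widetilde\cp_\la$ coincides with the paper's $\cq_{\la,k}$. The matching $\widetilde\cp_{\la_k}=\cp_{\la_k}$, the monotonicity from $\partial_\la\cl_\la(\psi)=-\tfrac12|\psi|_2^2$, the Palais--Smale argument below $\ga_{crit}$, the spectral gap lower bound $\widetilde\ga(\la)\ge\inf\cm_{\la_k}>0$ separating $\widetilde\psi_\la$ from $\psi_\la$, and the continuity across $\la_k$ are all present in the paper (Remark~\ref{rem:submanifold}, Proposition~\ref{small-reduction}, Lemma~\ref{lem:continuation}).

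The genuine gap is your assertion that ``in this form $\widetilde\cp_\la$ is a smooth codimension-$d_k$ submanifold of $\cp_\la$, and the standard Pankov-type Lagrange multiplier argument'' applies. It does not apply off the shelf, and this is precisely where the constant $a_k$ in the theorem comes from. For $\la<\la_k$ the quadratic form $\frac12\int(D\psi,\psi)-\frac\la2|\psi|_2^2$ is strictly \emph{positive} on $V_k$ (equal to $\frac{\la_k-\la}{2}|\psi|_2^2$), yet $V_k$ is lumped with the negative side $E_{\la_k}^0\oplus E_{\la_k}^-$ in the shifted splitting. So the hypotheses of the Szulkin--Weth/Pankov reduction fail, and existence and uniqueness of the constrained maximizer $\nu_{\la,k}(\phi)$ on the half-space $\widehat E_{\la_k}(\phi)$ must be proved by hand. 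This is the content of the paper's Lemma~\ref{continuation lemma1}: it works only for $|\phi|_2^2\geq\si$ and $\la\in(\la_k-\de_{k,\si},\la_k]$, and the proof needs the fact that the nodal set of any eigenspinor in $E_{\la_k}^0$ has measure zero together with a splitting of $M$ into regions $\Om_1,\Om_2$ to defeat the small positive quadratic term on $V_k$. Without that, $\widetilde\cp_\la$ need not be a manifold and the Lagrange multipliers may not vanish. Relatedly, the remedy you propose for the upper bound (an $L^{2^*}$-nearest-neighbor correction $T$ applied to $\psi_{\la_k}$) is not needed and is not the route the paper takes: once $\nu_{\la,k}$ exists, the upper bound follows from $\inf\cn_{\la,k}\leq\cn_{\la,k}(\phi)=\cl_\la(\nu_{\la,k}(\phi))\to\cm_{\la_k}(\phi)$ for $\phi$ a minimizer of $\cm_{\la_k}$, which is the elementary weak-lower-semicontinuity argument in Lemma~\ref{lem:continuation}~a)--b). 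So you have identified the wrong step as ``the hard part'': the real difficulty is establishing the reduction map $\nu_{\la,k}$, not constructing test elements on $\widetilde\cp_\la$.
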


\begin{Rem}\label{rem:cont}
  By Theorem~\ref{thm:mult}, and its proof, the least energy solution $\psi_\la$ for $\la\in[\la_k,\la_{k+1})$ can be continued to $\la\in(a_k,\la_{k+1})$ in the sense that the energy, and the corresponding min-max description, changes continuously. We do not know whether the solutions $\psi_\la$ and $\psitil_\la$ depend continuously on $\la$. Since $\cl_\la(\psitil_\la)\to \cl_{\la_k}(\psi_{\la_k})>0$ and $\cl_\la(\psi_\la)\to0$ as $\la\nearrow\la_k$, we see that $\psitil_\la$ is different from $\psi_\la$. From a variational point of view $\psi_\la$ is a minimizer of $\cl_\la$ on the Nehari-Pankov manifold $\cp_\la$, $\la\in[a_k,\la_k)$, whereas $\psitil_\la$ minimizes $\cl_\la$ on a submanifold $\cq_{\la,k}$ of $\cp_\la$ of codimension $\dim\ker(D-\la_k)$, $\la\in(a_k,\la_k)$. $\cq_{\la,k}$ is a continuation of $\cp_{\la_k}$ for $\la<\la_k$ close to $\la_k$.
\end{Rem}

Next we state several multiplicity results. Observe that given a solution $\psi$ of \eqref{main equ} and $\zeta\in S^1\subset\C$ then $\zeta\psi$ is also a solution. Therefore we only count $S^1$-orbits of solutions in our multiplicity results. Clearly the solutions $\psi_\la$ and $\psitil_\la$ from Theorems~\ref{main thm} and \ref{thm:mult} lie on different $S^1$-orbits because they lie on different energy levels. The proof of the following local bifurcation theorem is standard and will not be given here.

\begin{Thm}\label{thm:bif}
  For $k\in\Z$ there exists $b_k<\la_k$ such that the following holds. For each $\la\in(b_k,\la_k)$ problem \eqref{main equ} has at least $d_k=\dim_\C\ker(D-\la_k)$ $S^1$-orbits of solutions $\psi_{\la,j}$, $j=1,\dots,d_k$. These satisfy $\psi_{\la,j}\to0$ as $\la\nearrow\la_k$.
\end{Thm}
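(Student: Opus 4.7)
The plan is a standard $S^1$-equivariant Lyapunov-Schmidt reduction at $\la_k$ followed by an equivariant Lusternik-Schnirelmann argument. Set $V_k:=\ker(D-\la_k)$, of complex dimension $d_k$, and let $W_k$ be its $L^2$-orthogonal complement in $H^{\frac12}(M,\mbs(M))$. The splitting $H^{\frac12}=V_k\oplus W_k$ is invariant under the $S^1$-action $\psi\mapsto\zeta\psi$ that leaves $\cl_\la$ invariant. For $\la$ near $\la_k$ the operator $D-\la$ restricts to a bounded isomorphism on $W_k$ with inverse bounded uniformly in $\la$. Writing $\psi=v+w$ with $v\in V_k$, $w\in W_k$, the projection of the equation $(D-\la)\psi=g(|\psi|)\psi$, $g(s):=f(s)+s^{2^*-2}$, onto $W_k$ becomes a fixed-point equation in $w$ whose right-hand side is $o(\|\psi\|)$ as $\psi\to 0$. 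A contraction-mapping argument then produces a unique small solution $w=w(\la,v)\in W_k$, $S^1$-equivariant in $v$, with $w(\la,0)=0$ and $\|w(\la,v)\|=o(\|v\|)$ as $v\to 0$.

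The reduced functional $\Phi_\la(v):=\cl_\la(v+w(\la,v))$ is then an $S^1$-invariant function on the finite-dimensional space $V_k$ whose small critical points are in bijection with the small critical points of $\cl_\la$. Using $\int_M(Dv,v)\,d\vol_\ig=\la_k\|v\|_{L^2}^2$ for $v\in V_k$ together with the estimate on $w$, a Taylor expansion yields
\[
  \Phi_\la(v) = \tfrac12(\la_k-\la)\|v\|_{L^2}^2 - \int_M G(|v|)\,d\vol_\ig + o(\|v\|^{2^*})
\]
with $G(s)=F(s)+\frac1{2^*}s^{2^*}$. For $\la<\la_k$ close to $\la_k$, the quadratic term is strictly positive on $V_k\setminus\{0\}$ while the nonlinear term is strictly negative and of strictly higher order near $0$, so $\Phi_\la$ is strictly positive on some small sphere $S_\rho\subset V_k$ and tends to $-\infty$ along rays.

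Since the $S^1$-action on $V_k\setminus\{0\}$ is free and the orbit space $S_\rho/S^1\cong\C P^{d_k-1}$ has Lusternik-Schnirelmann category $d_k$, the standard equivariant min-max principle (see, e.g., Bartsch's monograph on topological methods with symmetries) produces at least $d_k$ distinct $S^1$-orbits of nontrivial critical points of $\Phi_\la$ for every $\la\in(b_k,\la_k)$ with a suitable $b_k<\la_k$. Choosing the critical radii so that they shrink as $\la\nearrow\la_k$ gives $\psi_{\la,j}=v_j+w(\la,v_j)\to 0$, as required. The only potential obstacle---the strongly indefinite character of $\cl_\la$---is sidestepped entirely by the reduction: in a small neighbourhood of the trivial solution, the infinite-dimensional component $w$ is slaved to the finite-dimensional $v$ through the uniformly bounded inverse $(D-\la)^{-1}|_{W_k}$, so the reduced problem is genuinely finite-dimensional and the classical semidefinite equivariant bifurcation theory applies directly.
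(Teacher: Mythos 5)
Your proposal is essentially the standard $S^1$-equivariant Lyapunov-Schmidt reduction onto $\ker(D-\la_k)$ followed by an equivariant Lusternik-Schnirelmann argument on the orbit space $\C P^{d_k-1}$, which is precisely the mechanism behind \cite[Theorem~(3.1)]{Bartsch-Clapp:1990}, the reference the paper invokes in lieu of a written proof. The one imprecision is the remainder $o(\|v\|^{2^*})$ in the Taylor expansion of $\Phi_\la$: since $\|w(\la,v)\|=O(\|v\|^{p-1})$ when $f(s)\sim s^{p-2}$, the remainder is of order $\|v\|^{2(p-1)}$, which need not be $o(\|v\|^{2^*})$ for $p$ close to $2$; what one actually has (and all that is needed) is that the remainder is of strictly higher order than the dominant nonlinear term $\int_M G(|v|)\,d\vol_\ig\sim\|v\|^p$, so the conclusion is unaffected.
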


This theorem follows immediately from \cite[Theorem~(3.1)]{Bartsch-Clapp:1990}. That the bifurcation is subcritical is a consequence of the sign of our nonlinearity. Theorem~\ref{thm:mult2} and its proof actually give a lower bound  $\la_k-b_k\ge\nu$ that is uniform in $k\in\Z$. 

Combining Theorems~\ref{thm:mult} and \ref{thm:bif} we obtain a multiplicity result for solutions of \eqref{main equ} and \eqref{eq:BND} provided $\la<\la_k$ is close to $\la_k$. If we order the $\psi_{\la,j}$ by $\cl_\la(\psi_{\la,j})\le\cl_\la(\psi_{\la,j+1})$ then the least energy solution $\psi_\la$ corresponds to $\psi_{\la,1}$. As a consequence of our results, for $\max\{a_k,b_k\}<\la<\la_k$ we have $d_k+1$ $S^1$-orbits of solutions, namely the $S^1$-orbits of the high energy solutions $\psitil_\la$ from Theorem~\ref{thm:mult}, and the $d_k$ $S^1$-orbits of low energy solutions from Theorem~\ref{thm:bif} which bifurcate from the trivial branch $\{(\la,0):\la\in\R\}$. 

Theorem~\ref{thm:mult} and its proof essentially yield that, for $k\ge1$ the least energy solution $\psi_\la=\psi_{\la,1}$ bifurcating from $\la_{k+1}$ can be continued a bit below $\la_{k}$, turning into the ``bound state'' $\psitil_\la$ for $\la<\la_{k-1}$. If $d_k>1$ then it is of course an interesting problem whether the bound states $\psi_{\la,j}$, $j=2,\dots,d_k$, bifurcating from $\la_{k+1}$ can also be extended to below $\la_{k}$, in the sense of Remark~\ref{rem:cont}. Our last result and its proof suggest that all solutions bifurcating from $\la_k$ can be extended to $\la\in(\la_k-\nu,\la_k)$ with
\begin{equation}\label{eq:nu}
  \nu:=\frac{m}2\bigg( \frac{\om_m}{\text{Vol}(M,\ig)} \bigg)^{\frac1m}>0.
\end{equation}
This holds for all $k\in\Z$.

\begin{Thm}\label{thm:mult2}
For $\la\in \R$ problem \eqref{main equ} has at least
\[
  \ell(\la)=\sum_{\la<\la_k<\la+\nu} \dim_\C\ker(D-\la_k)
\]
distinct $S^1$-orbits of solutions $\psi_{\la,j}$, $j=1,\dots,\ell(\la)$.
\end{Thm}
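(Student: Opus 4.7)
The plan is to adapt a Brezis--Nirenberg-style multiplicity scheme to the strongly indefinite setting, combining the Nehari--Pankov framework already used in the paper with an $S^1$-equivariant pseudo-index argument of Benci--Rabinowitz/Fadell--Rabinowitz type. The arithmetic identity
\[
  \frac{1}{2m}\,\nu^m\,\vol(M,\ig)=\frac{1}{2m}\Bigl(\tfrac{m}{2}\om_m^{1/m}\Bigr)^m=\ga_{crit}
\]
makes $\nu$ the correct threshold: the condition $\la_k-\la<\nu$ is precisely what forces eigenspinor min--max levels to lie strictly below the compactness ceiling $\ga_{crit}$ established in the preceding $(PS)$-analysis.

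I would first reduce to $\la\notin\spec(D)$; the excluded case follows by a limit argument from Theorems~\ref{main thm} and \ref{thm:mult}, which establish continuity/monotonicity of the energy branches. Under this assumption $\cp_\la$ is a smooth $S^1$-invariant manifold on which $\cl_\la$ is bounded below by a positive constant $c_*>0$ and satisfies $(PS)_c$ for every $c<\ga_{crit}$. For each eigenvalue $\la_k\in(\la,\la+\nu)$ introduce the finite-dimensional eigenspace
\[
  V_k:=\bigoplus_{\la<\la_j\le\la_k}\ker(D-\la_j),\qquad n_k:=\sum_{\la<\la_j\le\la_k}\dim_\C\ker(D-\la_j),
\]
on which the quadratic form $Q_\la(\psi):=\int_M\bigl((D\psi,\psi)-\la|\psi|^2\bigr)\,d\vol_\ig$ is positive definite and bounded above by $(\la_k-\la)\|\psi\|_2^2$. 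Using $F\ge 0$ together with the Hölder bound $\|\psi\|_2\le\vol(M,\ig)^{1/(2m)}\|\psi\|_{2^*}$, the classical Brezis--Nirenberg ray maximization yields
\[
  \sup_{t\ge0}\cl_\la(t\psi)\le\frac{Q_\la(\psi)^m}{2m\,\|\psi\|_{2^*}^{2m}}\le\frac{(\la_k-\la)^m}{2m}\,\vol(M,\ig)<\ga_{crit}
\]
for every $\psi\in V_k\setminus\{0\}$.

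Next I would lift $V_k$ onto $\cp_\la$ via the Nehari--Pankov retraction $\pi$, using the upper-energy control already developed in the paper for test spinors (in the vein of the proof of Theorem~\ref{thm:mult}), to obtain a continuous $S^1$-equivariant map $h_k:S(V_k)\to\cp_\la$ with $\sup_{\psi\in S(V_k)}\cl_\la(h_k(\psi))<\ga_{crit}$. Then, for $1\le j\le n_k$, I would form the $S^1$-equivariant min--max values
\[
  c_j:=\inf_{A\in\ca_j}\sup_{\psi\in A}\cl_\la(\psi),
\]
where $\ca_j$ consists of $S^1$-invariant compact subsets of $\cp_\la$ whose Fadell--Rabinowitz $S^1$-cohomological index is at least $j$ and which link the image $h_k(S(V_k))$. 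These levels satisfy $c_*\le c_1\le\cdots\le c_{n_k}\le\sup_{h_k(S(V_k))}\cl_\la<\ga_{crit}$, so $(PS)_{c_j}$ is available, and the equivariant deformation lemma (cf.\ \cite[Theorem~(3.1)]{Bartsch-Clapp:1990}) produces $n_k$ distinct $S^1$-orbits of critical points on $\cp_\la$. Taking $k$ so that $\la_k$ is the largest eigenvalue in $(\la,\la+\nu)$ yields the $\ell(\la)$ $S^1$-orbits claimed.

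The hard part will be transferring the sharp upper bound $\tfrac{1}{2m}(\la_k-\la)^m\vol(M,\ig)$, clean on the linear subspace $V_k$, to the nonlinear manifold $\cp_\la$ without losing the strict inequality against $\ga_{crit}$. Two issues must be handled simultaneously: the negative-spectral correction introduced by $\pi$ must contribute non-positively (a consequence of the $L^2$-orthogonality of the spectral decomposition of $D-\la$), and the subcritical term $F$ must contribute only $o((\la_k-\la)^m)$ as $\la_k-\la\downarrow 0$. The latter is where the growth hypotheses $(f_2)$ in Theorem~\ref{main thm}~a) and $(f_4)$ in Theorem~\ref{main thm}~b) enter decisively. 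A secondary technical point is the identification of the Fadell--Rabinowitz index of $h_k(S(V_k))$ with $n_k$, handled by the standard intersection argument against the $L^2$-orthogonal complement of $V_k$ in the positive spectral subspace for $D-\la$.
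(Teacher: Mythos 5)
Your overall plan — use the eigenspaces with $\la<\la_k<\la+\nu$, lift them to the Nehari--Pankov manifold, and run an $S^1$-equivariant min--max below $\ga_{crit}$ — matches the paper's strategy in outline, and you correctly identified the arithmetic meaning of $\nu$. However, the proposal has a genuine gap at the single step you yourself flag as ``the hard part'', and the heuristic you give there is wrong.

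Your ray estimate $\sup_{t\ge0}\cl_\la(t\psi)\le\frac{Q_\la(\psi)^m}{2m|\psi|_{2^*}^{2m}}$ bounds the energy along $\R^+\psi$ only, whereas the Nehari--Pankov level $\cm_\la(\phi)$ is a \emph{maximum} over the half-space $\ehat_\la(\phi)=\R^+\phi\oplus E_\la^0\oplus E_\la^-$. Maximizing over more directions can only increase the value, so $\cm_\la(\phi)\ge\sup_{t\ge0}\cl_\la(t\phi)$, and your bound goes the wrong way. The parenthetical claim that ``the negative-spectral correction introduced by $\pi$ must contribute non-positively'' is therefore not correct as stated: it is true that adding $\chi\in E_\la^-$ decreases the quadratic form $\frac12(\|\psi^+\|_\la^2-\|\psi^-\|_\la^2)$, but the Nehari--Pankov maximizer is chosen precisely to \emph{increase} the total functional. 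What makes the bound work in the paper is that on $\widetilde\cp_\la$ one has the identity $\cj_\la(\phi)=\frac1{2m}\cs_\la(\phi)^m$ (Lemma~\ref{identity cj}), turning the Nehari--Pankov energy into a power of the Rayleigh quotient
\[
  \rr_\la(\psi)=\frac{\|\psi^+\|_\la^2-\|\psi^-\|_\la^2}{\,|\psi-T_\la(\psi)|_{2^*}^2\,},
\]
in which the negative-spectral component lowers the numerator, while the $L^2$-orthogonality of $E_\la^+$, $E_\la^0$, $E_\la^-$ combined with H\"older raises the denominator. Without introducing $\rr_\la$ (or an equivalent device) you cannot transfer the linear-subspace estimate to the nonlinear constraint $\cp_\la$; the concentration estimates from Section~\ref{Auxiliary estimates} that you propose to recycle are designed for the test spinors $\bar\va_\vr$ and do not provide the needed uniform bound over the eigenspace sphere.

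Two secondary points. First, the condition ``$F$ contributes only $o((\la_k-\la)^m)$ as $\la_k-\la\downarrow0$'' is not the right requirement: in this theorem $\la_k-\la$ ranges over the whole interval $(0,\nu)$ and need not be small. The paper simply uses $F\ge0$ (so $\cl_\la\le\ce_\la\le\tce_\la$) and no asymptotic information on $F$ enters. Second, the reduction to $\la\notin\spec(D)$ is unnecessary: the kernel case is absorbed into the argument directly via the nonlinear projection $T_\la$, which is precisely what the quantity $\psi-T_\la(\psi)$ in $\rr_\la$ is there for. Your limit argument would otherwise have to re-establish distinctness of the $\ell(\la)$ orbits in the limit, which is not obvious.
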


The Weyl formula and the symmetry of the eigenvalues of $D$ in dimension $m\not\equiv 3 \, (\text{mod }4)$ imply for $\La>0$ that 
\[
  d_+(\La) := \dim_\C\Bigg( \bigoplus_{0<\la_k\leq\La}\ker(D-\la_k) \Bigg) \sim c_M\La^m
\]
and
\[
  d_-(\La) := \dim\Bigg( \bigoplus_{-\La\le\la_k<0}\ker(D-\la_k) \Bigg) \sim c_M\La^m
\]
where $c_M>0$ is a constant depending on $m=\dim(M)$ and $\vol(M,\ig)$; see Proposition~\ref{Weyl} below. From this we immediately obtain the following result.

\begin{Prop}\label{prop:mult2}
If  $m\not\equiv 3$~(mod $4$) then $\ell(\la)\to\infty$ as $|\la|\to\infty$, hence the number of $S^1$-orbits of solutions of \eqref{main equ} becomes unbounded as $|\la|\to\infty$.
\end{Prop}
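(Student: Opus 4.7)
My plan is to deduce the proposition from Theorem~\ref{thm:mult2} together with the Weyl-type asymptotics for $D$ recorded in Proposition~\ref{Weyl}. Under the hypothesis $m\not\equiv 3\pmod{4}$ that result provides both
\[
  d_+(\La)\sim c_M\La^m \quad\text{and}\quad d_-(\La)\sim c_M\La^m \qquad\text{as }\La\to\infty,
\]
with $m\ge 2$. This is precisely where $m\not\equiv 3\pmod{4}$ enters: in these dimensions the real/quaternionic structure on $\mbs(M)$ supplies an anti-linear involution anti-commuting with $D$, which forces $\spec(D)=-\spec(D)$ with multiplicities, and hence transports the asymptotic for $d_+$ to one for $d_-$ with the same leading coefficient.

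The main observation is that $\ell(\la)$ counts eigenvalues in a window of \emph{fixed} length $\nu$, so it cannot remain bounded if the one-sided spectral counting function grows polynomially of degree $m\ge 2$. I would argue by contradiction: assume there exist $L,R>0$ such that $\ell(\la)\le L$ for all $\la>R$. Partitioning $(R,R+N\nu]$ into the $N$ consecutive half-open windows $I_j:=(R+(j-1)\nu,R+j\nu]$ and using the definition of $\ell$, each $I_j$ contains at most $L+1$ elements of $\spec(D)$ counted with multiplicity — the $+1$ absorbing the right endpoint, which is excluded from $\ell(R+(j-1)\nu)$ but included in $I_j$. Summing gives
\[
  d_+(R+N\nu)-d_+(R)\le (L+1)N,
\]
which is linear in $N$. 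This contradicts the superlinear lower bound $d_+(R+N\nu)\sim c_M\nu^m N^m$ coming from Proposition~\ref{Weyl} with $m\ge 2$. Hence $\ell$ is unbounded as $\la\to+\infty$. Repeating the argument with $d_-$ in place of $d_+$ rules out boundedness as $\la\to-\infty$, and it is exactly here that the hypothesis $m\not\equiv 3\pmod{4}$ is needed.

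Combined with Theorem~\ref{thm:mult2}, this yields that the number of $S^1$-orbits of solutions of \eqref{main equ} is unbounded as $|\la|\to\infty$. I do not foresee any serious obstacle: once Proposition~\ref{Weyl} is in hand, the proposition is a brief counting exercise, the only mild issue being the half-open/open convention in the partition, harmlessly absorbed into the factor $L+1$ above.
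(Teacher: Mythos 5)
Your argument is essentially the paper's: Proposition~\ref{prop:mult2} is, in the authors' words, obtained \emph{immediately} from the Weyl asymptotics of Proposition~\ref{Weyl}, and your counting-by-contradiction step supplies exactly the elementary bookkeeping that ``immediately'' elides. Two remarks. First, a small slip in the endpoint bookkeeping: if $R+j\nu\in\spec(D)$ its contribution to $I_j$ is the full multiplicity $\dim_\C\ker(D-(R+j\nu))$, not $1$, so ``$L+1$'' should really be ``$L+\dim_\C\ker(D-(R+j\nu))$''. This is harmless — that multiplicity is itself $\le L$ because the eigenvalue lies in the open window $\big(R+j\nu-\tfrac{\nu}{2},\,R+j\nu+\tfrac{\nu}{2}\big)$, or one can simply choose $R$ off the countable set $\{\la_k-j\nu : j\in\Z,\ k\in\Z\}$ — but the bound should be stated correctly. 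Second, and more substantively, your contradiction yields $\limsup_{|\la|\to\infty}\ell(\la)=\infty$ (as you yourself write, ``$\ell$ is unbounded''), which is weaker than the displayed claim $\ell(\la)\to\infty$: the one-sided asymptotics $d_\pm(\La)\sim c_M\La^m$ alone do not obviously rule out arbitrarily long spectral gaps at high energy, which would force $\liminf\ell(\la)=0$; closing that would require either a quantitative Weyl remainder dominated by $\nu$ or a separate gap estimate for $\spec(D)$. Since the paper offers the same ``immediate from Weyl'' justification, and the operative ``hence''-clause (unboundedness of the number of $S^1$-orbits) only needs the $\limsup$ version, this is arguably an imprecision in the proposition's wording rather than a defect specific to your argument — but you should be aware that what you have proved, like what the paper proves, is unboundedness and not literal convergence to infinity.
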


\begin{Rem}
a) If $m\equiv 3$~(mod $4$) the Schr\"odinger-Lichnerowicz formula only yields the following lower bound for the number $n(\la)$ of solutions of \eqref{main equ}:
\[
  n(\la)+n(-\la) \ge \sum_{\la_k\in(\la,\la+\nu)\cup(-\la-\nu,-\la)}d_k \to\infty \quad\text{as $\la\to\infty$.}
\]

b) We recall that Theorems~\ref{main thm} and \ref{thm:mult} do not yield any solution of \eqref{eq:BND} with $\la<0$ or for \eqref{main equ} with $\la=\la_k$, $k\le0$, whereas Theorems~\ref{thm:bif}, \ref{thm:mult2}, and Proposition~\ref{prop:mult2} do not distinguish between the sign of $\la$ or whether $\la$ lies in $\spec(D)$. Theorems~\ref{main thm}~b) and \ref{thm:mult2} yield the existence of a solution of \eqref{main equ} for every $\la\in\R$ except for a finite number of nonpositive eigenvalues. 
\end{Rem}

Finally we compare our theorems with some related results for the classical Brezis-Nirenberg problem
\begin{equation}\label{eq:BN}
\left\{
\begin{aligned}
-\De u &= \la u +|u|^{\frac4{N-2}}u && \text{in }\Om\\
u &= 0 && \text{on }\pa\Om
\end{aligned}
\right.
\end{equation}
on a smooth bounded domain $\Om\subset\R^N$.

\begin{Rem}
a) An interesting observation is that our results about \eqref{main equ} and \eqref{eq:BND} do not depend on the dimension unlike the classical result about \eqref{eq:BN} from \cite{BN1983} where dimension $N=3$ is special.

b) Theorem~\ref{thm:mult2} corresponds to the multiplicity result from \cite{Cerami-Fortunato-Struwe:1984} for \eqref{eq:BN}.

c) Other multiplicity results for sign-changing solutions of \eqref{eq:BN} have been proved in \cite{Clapp-Weth:2005, Devillanova-Solimini:2002}. These also depend on the dimension, in particular they differ for $4\le N\le6$ and $N\ge7$. It would be very interesting whether analogous results hold for \eqref{eq:BND}.
\end{Rem}

\section{Preliminaries on spinors}\label{Preliminaries}
\subsection{Spin structure and the Dirac operator}

Let $(M,\ig)$ be an $m$-dimensional Riemannian manifold with a chosen orientation. Let $P_{SO}(M)$ be the set of positively oriented orthonormal frames on $(M,\ig)$. This is a $SO(m)$-principal bundle over $M$. A {\it spin structure} on $M$ is a pair $\sa=(P_{Spin}(M),\vartheta)$ where $P_{Spin}(M)$ is a $Spin(m)$-principal bundle over $M$ and $\vartheta: P_{Spin}(M)\to P_{SO}(M)$ is a map such that the diagram
\begin{displaymath}
\xymatrix@R=0.3cm{
P_{Spin}(M)\times Spin(m)  \ar[r] \ar[dd]^{\displaystyle\vartheta\times \Theta}&  P_{Spin}(M)\ar[dd]^{\displaystyle\vartheta} \ar[dr] \\
&&  M \\
P_{SO}(M)\times SO(m) \ar[r] & P_{SO}(M)  \ar[ur] }
\end{displaymath}
commutes, where $\Theta: Spin(m)\to SO(m)$ is the nontrivial double covering of $SO(m)$. There is a topological condition for the existence of a spin structure, namely, the vanishing of the second Stiefel-Whitney class $\om_2(M)\in H^2(M,\Z_2)$. Furthermore, if a spin structure exists, it need not be unique. For these results we refer to \cite{Friedrich, Lawson}.

In order to introduce the spinor bundle, we recall that the Clifford algebra $Cl(\R^m)$ is the associative $\R$-algebra with unit, generated by $\R^m$ satisfying the relation $x\cdot y-y\cdot x=-2(x,y)$ for $x,y\in\R^m$ (here $(\cdot,\cdot)$ is the Euclidean scalar product on $\R^m$). It turns out that $Cl(\R^m)$ has a smallest representation $\rho: Spin(m)\subset Cl(\R^m)\to End(\mbs_m)$ of dimension $\dim_\C(\mbs_m)=2^{[\frac{m}2]}$ such that $\C l(\R^m):=Cl(\R^m)\otimes\C\cong End_\C(\mbs_m)$ as $\C$-algebra. The spinor bundle is then defined as the associated vector bundle
\[
  \mbs(M):=P_{Spin}(M)\times_\rho\mbs_m.
\]
Note that the spinor bundle carries a natural Clifford multiplication, a natural hermitian metric and a metric connection induced from the Levi-Civita connection on $TM$ (see \cite{Friedrich, Lawson}), this bundle satisfies the axioms of Dirac bundle in the sense that
\begin{itemize}
\item[$(i)$] for any $x\in M$, $X,Y\in T_xM$ and $\psi\in\mbs_x(M)$
\[
  X\cdot Y\cdot \psi + Y\cdot X\cdot\psi + 2\ig(X,Y)\psi = 0;
\]
\item[$(ii)$] for any $X\in T_xM$ and $\psi_1,\psi_2\in\mbs_x(M)$,
\[
  (X\cdot \psi_1,\psi_2)=-(\psi_1,X\cdot\psi_2),
\]
where $(\cdot,\cdot)$ is the hermitian metric on $\mbs(M)$;
\item[$(iii)$] for any $X,Y\in\Ga(TM)$ and $\psi\in\Ga(\mbs(M))$,
\[
  \nabla_X^\mbs(Y\cdot\psi)=(\nabla_XY)\cdot\psi+Y\cdot\nabla_X^\mbs\psi,
\]
where $\nabla^\mbs$ is the metric connection on $\mbs(M)$.
\end{itemize}
The Dirac operator is then defined on the spinor bundle $\mbs(M)$ as the composition
\begin{displaymath}
\xymatrixcolsep{1.6pc}\xymatrix{
 D:\Ga(\mbs(M)) \ar[r]^-{\nabla^\mbs}
  & \Ga(T^*M\otimes \mbs(M)) \ar[r]
  & \Ga(TM\otimes \mbs(M)) \ar[r]^-{\mathfrak{m}}
  & \Ga(\mbs(M))}
\end{displaymath}
where $\mathfrak{m}$ denotes the Clifford multiplication $\mathfrak{m}: X\otimes\psi\mapsto X\cdot\psi$.

\subsection{The Dirac spectrum and $H^{\frac12}$ spinors}

Let $\spec(D)$ denote the spectrum of the Dirac operator $D$. It is well-known that $D$ is essentially self-adjoint in $L^2(M,\mbs(M))$ and has compact resolvents (see \cite{Friedrich, Ginoux, Lawson}). Moreover, $\spec(D)=\{\la_k:k\in\Z\}$ is a closed subset of $\R$ consisting of a two-sided unbounded discrete sequence of eigenvalues with finite multiplicities. And the eigenspaces of $D$ form a complete orthonormal decomposition of $L^{2}(M,\mbs(M))$.

\begin{Prop}\label{Weyl}
If $m\not\equiv 3 \, (\text{mod }4)$, then the growth of the Dirac eigenvalues satisfies Weyl's asymptotic law:
\[
  \lim_{\La\to+\infty} \frac{d_\pm(\La)}{\La^m}= C_m  \mbox{\rm Vol} (M,\ig)
\]
where
\[
  d_+(\La)=\dim_\C\left( \bigoplus_{0<\la_k\leq\La}\ker(D-\la_k) \right),
  \quad
  d_-(\La)=\dim_\C\left( \bigoplus_{-\La\le\la_k<0}\ker(D-\la_k) \right)
\]
and $C_m>0$ is a dimensional constant.
\end{Prop}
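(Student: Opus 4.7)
\medskip

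\noindent\textbf{Proof plan for Proposition~\ref{Weyl}.}

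My plan is to reduce the statement to the classical Weyl asymptotics for a second order elliptic selfadjoint operator acting on a vector bundle, and then exploit the symmetry of the Dirac spectrum available when $m\not\equiv 3\pmod 4$. First I would invoke the Schr\"odinger-Lichnerowicz formula
\[
  D^{2}=\nabla^{\mbs *}\nabla^{\mbs}+\tfrac{R_\ig}{4},
\]
where $R_\ig$ denotes the scalar curvature. Since $R_\ig$ is bounded on the compact manifold $M$, the operator $D^{2}$ is an elliptic, nonnegative, essentially selfadjoint second order differential operator on $\mbs(M)$, with the same leading symbol as the connection Laplacian $\nabla^{\mbs*}\nabla^{\mbs}$. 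Its principal symbol at $\xi\in T^*_xM$ is the scalar $|\xi|_\ig^{2}\cdot\mathrm{Id}_{\mbs_x(M)}$.

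Next I would apply the standard Weyl law for second order selfadjoint elliptic operators on sections of a hermitian vector bundle (see H\"ormander or Shubin). Denoting by $N(\mu)$ the number of eigenvalues of $D^{2}$ in $[0,\mu]$ (counted with multiplicity), this yields
\[
  N(\mu)\sim \frac{\mathrm{rk}_\C(\mbs(M))\cdot \mathrm{vol}(B_1^m)}{(2\pi)^m}\,\mathrm{Vol}(M,\ig)\,\mu^{m/2}\qquad\text{as }\mu\to\infty,
\]
where $\mathrm{rk}_\C(\mbs(M))=2^{[m/2]}$. Since the nonzero eigenvalues of $D^{2}$ are exactly the $\la_k^{2}$ and $\dim\ker D<\infty$, setting $\mu=\La^{2}$ gives
\[
  d_+(\La)+d_-(\La)=N(\La^{2})-\dim\ker D\sim \widetilde C_m\,\mathrm{Vol}(M,\ig)\,\La^{m}
\]
for an explicit dimensional constant $\widetilde C_m>0$.

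It remains to separate $d_+(\La)$ from $d_-(\La)$, and this is where the hypothesis $m\not\equiv 3\pmod 4$ enters. In those dimensions $\spec(D)$ is symmetric about the origin: one constructs an antilinear (or, in even dimensions, linear) bundle isomorphism $J:\mbs(M)\to\mbs(M)$ that preserves the hermitian metric and anticommutes with Clifford multiplication by tangent vectors, hence satisfies $J\nabla^{\mbs}=\nabla^{\mbs}J$ and $JD=-DJ$. Concretely, for $m$ even one uses the grading via Clifford multiplication by an appropriate multiple of the complex volume element together with an involution swapping $\mbs^\pm$, while for $m\equiv 1\pmod 4$ one uses the real or quaternionic structure on $\mbs_m$ which commutes with $Spin(m)$ and anticommutes with $Cl^{1}(\R^m)$. (For $m\equiv 3\pmod 4$ this structure is unavailable, which is precisely why the proposition excludes this case.) The map $J$ induces a bijection between $\ker(D-\la_k)$ and $\ker(D+\la_k)$ for every $k$, yielding $d_+(\La)=d_-(\La)$ for every $\La>0$. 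Combined with the asymptotics above, this forces
\[
  d_\pm(\La)\sim \tfrac12\widetilde C_m\,\mathrm{Vol}(M,\ig)\,\La^{m}\qquad\text{as }\La\to+\infty,
\]
which is the claim with $C_m=\tfrac12\widetilde C_m$.

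The only real obstacle is the construction of the symmetry operator $J$ and the verification that it anticommutes with $D$; both are classical but require a careful bookkeeping of the algebraic structure of the pinor representation $\mbs_m$ in each residue class of $m$ modulo $8$ (or equivalently modulo $4$ for the purposes of spectral symmetry). The Weyl asymptotics for $D^{2}$, in contrast, is a direct citation.
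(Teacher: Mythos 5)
Your proposal is correct and follows essentially the same route as the paper: Weyl asymptotics for $D^{2}$ via the Schr\"odinger--Lichnerowicz formula gives the growth of $N(\La)=d_+(\La)+d_-(\La)+\dim\ker D$, and the symmetry of $\spec(D)$ about the origin for $m\not\equiv 3\pmod 4$ gives $d_+(\La)=d_-(\La)$; the paper simply cites these two facts (to Berline--Getzler--Vergne for the heat-kernel Weyl law and to Ginoux's monograph for spectral symmetry) where you sketch the underlying arguments.
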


\begin{proof}
For $\La>0$ let
\[
  N(\La)=\dim\Bigg( \bigoplus_{|\la_k|\le\La}\ker(D-\la_k) \Bigg).
\]
be the sum of the multiplicities of eigenvalues with modulus at most $\La$. Then the Schr\"odinger-Lichnerowicz formula for the Dirac operator yields
\[
  \lim_{\La\to+\infty}\frac{N(\La)}{\La^m}=c_m \mbox{\rm Vol} (M,\ig)
\]
for some positive dimensional constant $c_m$; see \cite[Corollary 2.43]{BGV}, for instance.

If the dimension $m\not\equiv 3 \, (\text{mod }4)$ then $\spec(D)$ is symmetric about the origin including the multiplicities \cite[Theorem~1.3.7]{Ginoux}, hence
\[
  d_+(\La)=d_-(\La)=\frac{N(\La)-\dim\ker(D)}2.
\]
The proposition follows with $C_m=c_m/2$.
\end{proof}

\begin{Rem}
In dimension $m\equiv 3\, (\text{mod }4)$ the spectrum $\spec(D)$ is not symmetric. In order to measure the lack of symmetry of $\spec(D)$ in this case, Atiyah, Patodi and Singer \cite{APS} introduced the so-called $\eta$-invariant of $D$, denoted by $\eta(D)$. In particular $\eta(D)=0$ as soon as $\spec(D)$ is symmetric. However, only few $\eta$-invariants are know explicitly. It was shown in \cite{Dahl} that, if $m\equiv 3\, (\text{mod }4)$, for arbitrary large $\La>0$, $j\in\N$ and $0<l_1<\dots<l_j<\La$ there exists a Riemannian metric $\ig$ such that $\spec(D_\ig)\cap(0,\La)=\{l_1<\dots<l_j\}$ and $\spec(D_\ig)\cap(-\La,0)=\emptyset$. The distribution of the spectrum in these dimensions becomes complicated. For further information about spectral theory for Dirac operators and for related topics we refer to the monograph by Ginoux \cite{Ginoux}.
\end{Rem}

We now define the operator $|D-\la|^{\frac12}: L^2(M,\mbs(M))\to L^2(M,\mbs(M))$ by
\[
  \psi = \sum_{k\in\Z} a_k\eta_k\ \  \mapsto\ \  |D-\la|^{\frac12}\psi := \sum_{k\in\Z}|\la_k-\la|^{\frac12}a_k\eta_k
\]
where $\eta_k\in\ker(D-\la_k)$ and $\int_M|\eta_k|^2d\vol_\ig=1$ for $k\in\Z$.
The Hilbert space
\[
  H^{\frac12}(M,\mbs(M))
   := \Big\{\psi \in L^2(M,\mbs(M)):\ |D|^{\frac12}\psi\in L^2(M,\mbs(M)) \Big\}.
\]
coincides with the Sobolev space $W^{\frac12,2}(M,\mbs(M))$ (see \cite{Adams, Ammann}). Let $P_\la^0:L^2(M,\mbs(M))\to\ker(D-\la)$ denote the projector. We can endow $H^{\frac12}(M,\mbs(M))$ with the inner product
\[
  \inp{\psi}{\va}_\la = \real\big(|D-\la|^{\frac12}\psi,|D-\la|^{\frac12}\va\big)_2+\real(P_\la^0\psi,P_\la^0\va)_2
\]
and the induced norm $\|\cdot\|_\la=\inp{\cdot}{\cdot}_\la^{\frac12}$, where $(\psi,\va)_2=\int_M(\psi,\va)d\vol_\ig$ is the $L^2$-inner product on spinors. Clearly $E := H^{\frac12}(M,\mbs(M))$ with this inner product has the orthogonal decomposition of three subspaces
\begin{\equ}\label{decomposition}
  E = E_\la^+\op E_\la^0\op E_\la^-
\end{\equ}
where $E_\la^\pm$ is the positive (resp.\ negative) eigenspace of $D-\la$, and $E_\la^0=\ker(D-\la)$ is its kernel (which may be trivial). The dual space of $E$ will be denoted by $E_\la^*=H^{-\frac12}(M,\mbs(M))$. By $P_\la^\pm:E\to E_\la^\pm$ and $P_\la^0:E\to E_\la^0$ we denote the orthogonal projections. If $\la$ is clear from the context, we simply write $\psi^\pm=P_\la^\pm(\psi)$ and $\psi^0=P_\la^0(\psi)$ for $\psi\in E$.

\section{The Palais-Smale condition}\label{sec:PS}
Equation \eqref{main equ} is the Euler-Lagrange equation of the functional
\[
  \cl_\la(\psi)
   = \frac12\int_M(D\psi,\psi)d\vol_\ig - \frac\la2\int_M|\psi|^2d\vol_\ig - \int_M F(|\psi|)d\vol_\ig - \frac1{2^*}\int_M|\psi|^{2^*}d\vol_\ig
\]
with $F(s)=\int_0^s f(t)t\,dt$. In the sequel, by $L^q$ we denote the Banach space $L^q(M,\mbs(M))$ for $q\geq1$ and by $|\cdot|_q$ we denote the usual $L^q$-norm.

The functional $\cl_\la$ is well defined on $E=H^{\frac12}(M,\mbs(M))$ and is of class $\cc^1$ as a consequence of the assumptions in Theorem~\ref{main thm} and since $E$ embeds into $L^q(M,\mbs(M))$ for $q\in[1,2^*]$. We write $\psi=\psi^++\psi^0+\psi^-\in E= E_\la^+\op E_\la^0\op E_\la^-$ according to the decomposition \eqref{decomposition}. Then $\cl_\la$ has the form
\[
  \cl_\la(\psi)
   = \frac12\big(\|\psi^+\|_\la^2-\|\psi^-\|_\la^2\big) - \int_M F(|\psi|)d\vol_\ig - \frac1{2^*}\int_M|\psi|^{2^*}d\vol_\ig.
\]
Obviously the functional $\ce$ defined in \eqref{ce lm} coincides with $\cl_\la$ when $f\equiv0$.

We now investigate the Palais-Smale condition for $\cl_\la$. Due to the non-compactness of the critical embedding $E=H^{\frac12}(M,\mbs(M))\hookrightarrow L^{2^*}(M,\mbs(M))$, one cannot expect that $\cl_\la$ and $\ce$ satisfy the Palais-Smale condition on $E$. We shall see that the Palais-Smale condition holds below a critical value.

\begin{Lem}\label{PS-bdd}
Suppose $f\in\cc[0,\infty)$ satisfies $(f_2)$.
Then a $(PS)_c$-sequence $(\psi_n)_n$ for $\cl_\la$ is bounded, for any $c\in\R$. Moreover, $c=0$ if and only if $\psi_n\to0$.
\end{Lem}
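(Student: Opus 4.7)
The plan is to extract boundedness from the Palais-Smale condition by testing $\cl_\la'(\psi_n)$ along two independent directions, $\psi_n$ and $\psi_n^+-\psi_n^-$, and then iterating the resulting estimates. As a preliminary, $(f_2)$ together with $f(0)=0$ and continuity yields: for every $\eps>0$ there is $C_\eps>0$ with
\[
  |f(s)|\le C_\eps+\eps s^{2^*-2},\quad |F(s)|\le C_\eps s^2+\eps s^{2^*},\quad \bigl|f(s)s^2-2F(s)\bigr|\le C_\eps s^2+\eps s^{2^*}
\]
for all $s\ge 0$. The Ambrosetti--Rabinowitz-type identity
\[
  2\cl_\la(\psi_n)-\cl_\la'(\psi_n)\psi_n = \int_M\bigl(f(|\psi_n|)|\psi_n|^2-2F(|\psi_n|)\bigr)\,d\vol_\ig+\tfrac{1}{m}|\psi_n|_{2^*}^{2^*}
\]
has left-hand side dominated by $2|c|+1+o(1)\|\psi_n\|_\la$. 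Inserting the pointwise bound, using $|\psi_n|_2\le C|\psi_n|_{2^*}$ (compactness of $M$), and applying Young's inequality with exponents $2^*/2$ and $2^*/(2^*-2)$ to absorb the lower-order critical term into $|\psi_n|_{2^*}^{2^*}$ produces a first estimate
\[
  |\psi_n|_{2^*}^{2^*}\le C\bigl(1+o(1)\|\psi_n\|_\la\bigr).
\]

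For the $H^{1/2}$-bound I would decompose $\psi_n=\psi_n^++\psi_n^0+\psi_n^-$ according to $E=E_\la^+\oplus E_\la^0\oplus E_\la^-$ and test $\cl_\la'(\psi_n)$ against $\psi_n^+-\psi_n^-$. The quadratic part contributes $\|\psi_n^+\|_\la^2+\|\psi_n^-\|_\la^2$, while Hölder's inequality together with the Sobolev embedding $E\hookrightarrow L^{2^*}$ and the pointwise bound on $f(s)s$ keep the nonlinear terms bounded by $C(|\psi_n|_{2^*}+|\psi_n|_{2^*}^{2^*-1})\|\psi_n\|_\la$. Since $E_\la^0=\ker(D-\la)$ is finite-dimensional, $|\psi_n^0|_2\le C|\psi_n|_{2^*}$, so
\[
  \|\psi_n\|_\la^2\le o(1)\|\psi_n\|_\la+C\bigl(|\psi_n|_{2^*}+|\psi_n|_{2^*}^{2^*-1}\bigr)\|\psi_n\|_\la+C|\psi_n|_{2^*}^2.
\]
A short bootstrap now closes the argument: if $\|\psi_n\|_\la\to\infty$ along a subsequence, the first estimate forces $|\psi_n|_{2^*}^{2^*-1}\le o(1)\|\psi_n\|_\la^{(2^*-1)/2^*}$, and the displayed inequality then reduces to $\|\psi_n\|_\la^2\le o(1)\|\psi_n\|_\la^{2-1/2^*}$, a contradiction since $2-1/2^*<2$. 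Hence both $\|\psi_n\|_\la$ and $|\psi_n|_{2^*}$ are bounded.

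For the ``moreover'' claim, $\psi_n\to 0$ in $E$ trivially forces $c=\cl_\la(0)=0$. Conversely, if $c=0$ then, using the now-established boundedness, the first estimate sharpens to $|\psi_n|_{2^*}^{2^*}\le C_\eps|\psi_n|_2^2+o(1)$, and by the compact embedding $E\hookrightarrow L^2$ a subsequence satisfies $\psi_n\to\psi_0$ in $L^2$. Passage to the weak-$E$ limit in $\cl_\la'(\psi_n)\to 0$---using subcritical compactness for the $F$-term and Brezis--Lieb for the critical one---identifies $\psi_0$ as a critical point of $\cl_\la$. A Brezis--Lieb splitting of the energy around $\psi_0$ then excludes $\psi_0\ne 0$, after which $|\psi_n|_2\to 0$, hence $|\psi_n|_{2^*}\to 0$, and the $H^{1/2}$-estimate finally yields $\|\psi_n\|_\la\to 0$. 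The principal obstacle is that neither test direction alone closes the boundedness argument in the presence of the critical exponent; the bootstrap coupling of the $L^{2^*}$ control (driven by $\psi_n$) and the $H^{1/2}$ inequality (driven by $\psi_n^+-\psi_n^-$) is the pivotal move, and it works without any Ambrosetti-Rabinowitz-type superquadraticity hypothesis on $f$.
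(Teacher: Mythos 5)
Your boundedness argument is correct and follows essentially the same route as the paper's: the identity
\[
  2\cl_\la(\psi_n)-\cl_\la'(\psi_n)[\psi_n]=\int_M\bigl(f(|\psi_n|)|\psi_n|^2-2F(|\psi_n|)\bigr)\,d\vol_\ig+\tfrac1m|\psi_n|_{2^*}^{2^*}
\]
first controls $|\psi_n|_{2^*}$ in terms of $\|\psi_n\|_\la$; then testing $\cl_\la'(\psi_n)$ against $\psi_n^+-\psi_n^-$ controls $\|\psi_n^++\psi_n^-\|_\la$, the finite dimensionality of $E_\la^0$ handles $\psi_n^0$, and the subcritical bootstrap in $\|\psi_n\|_\la$ closes the estimate. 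One small inaccuracy: you invoke $f(0)=0$ (i.e.\ part of $(f_1)$), but the lemma only assumes continuity and $(f_2)$; this is harmless, since continuity on $[0,\infty)$ together with $(f_2)$ already gives $|f(s)|\le C_\eps+\eps s^{2^*-2}$ and hence $|F(s)|\le C_\eps s^2+\eps s^{2^*}$ without any condition at $s=0$.

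Where you genuinely depart from the paper, and where there is a gap, is the direction $c=0\Rightarrow\psi_n\to0$. The paper derives $|\psi_n|_{2^*}\to0$ directly from the quantitative estimates of the boundedness step and then feeds that into the $\psi_n^+-\psi_n^-$ inequality and the finite-dimensional bound on $\psi_n^0$, staying entirely within the elementary estimates already established. You instead extract a weak limit $\psi_0$, identify it as a critical point of $\cl_\la$ via subcritical compactness and Brezis--Lieb, and then assert that ``a Brezis--Lieb splitting of the energy around $\psi_0$ excludes $\psi_0\ne0$.'' That exclusion is not justified under $(f_2)$ alone: the splitting $\cl_\la(\psi_n)=\ce_0(\psi_n-\psi_0)+\cl_\la(\psi_0)+o(1)$ together with $\ce_0(\psi_n-\psi_0)=\tfrac1{2m}|\psi_n-\psi_0|_{2^*}^{2^*}+o(1)\ge o(1)$ only yields $\cl_\la(\psi_0)\le0$, and nothing in $(f_2)$ forbids a nontrivial critical point of $\cl_\la$ at level $0$. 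To close this you need a sign/monotonicity hypothesis such as $(f_1)$--$(f_3)$, under which $\cl_\la(\psi_0)=\tfrac12\ck'(\psi_0)[\psi_0]-\ck(\psi_0)>0$ for $\psi_0\ne0$; you should either state that you are invoking those hypotheses (as the rest of the paper does) or replace the weak-limit step with a direct argument. A further, stylistic, consequence of your route is that it imports the Brezis--Lieb machinery that the paper only introduces afterwards, in the proof of the compactness proposition, so it inverts the intended logical order of this section.
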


\begin{proof}
Assumption $(f_2)$ implies: for every $\eps>0$ there exists $C_\eps>0$ such that
\begin{\equ}\label{assumption f2}
\max\{f(s)s^2,\, F(s) \}\leq C_\eps+\eps s^{2^*}.
\end{\equ}

Now let $(\psi_n)_n$ be a $(PS)_c$-sequence, then
\begin{equation}\label{ps1}
\aligned
\cl_\la(\psi_n)=\frac12\big(\|\psi^+_n\|_\la^2-\|\psi^-_n\|_\la^2\big)
-\int_M F(|\psi_n|)d\vol_\ig  
-\frac1{2^*}\int_M|\psi_n|^{2^*}d\vol_\ig 
= c+o_n(1),
\endaligned
\end{equation}
\begin{equation}\label{ps2}
\aligned
\cl_\la'(\psi_n)[\psi_n]=\|\psi^+_n\|_\la^2-\|\psi^-_n\|_\la^2
-\int_M f(|\psi_n|)|\psi_n|^2d\vol_\ig 
-\int_M|\psi_n|^{2^*}d\vol_\ig 
=o(\|\psi_n\|_\la),
\endaligned
\end{equation}
and
\begin{equation}\label{ps3}
\aligned
\cl_\la'(\psi_n)[\psi_n^+-\psi_n^-]&=
\|\psi_n^++\psi_n^-\|_\la^2
-\real\int_M
f(|\psi_n|)(\psi_n,\psi_n^+-\psi_n^-)d\vol_\ig\\
&\qquad -\real\int_M
|\psi_n|^{2^*-2}(\psi_n,\psi_n^+-\psi_n^-)d\vol_\ig\\
&=o(\|\psi_n^++\psi_n^-\|_\la).
\endaligned
\end{equation}
Thus, we have for every $C>2c$ and $n$ large:
\begin{equation}\label{ps4}
\aligned
C+o(\|\psi_n\|_\la)&\geq 2\cl_\la(\psi_n)-\cl_\la'(\psi_n)[\psi_n]\\
&=\int_M \big(f(|\psi_n|)|\psi_n|^2-2F(|\psi_n|)\big)d\vol_\ig +
\frac1m |\psi_n|_{2^*}^{2^*}\\
&\geq\frac1{2m} |\psi_n|_{2^*}^{2^*}- C',
\endaligned
\end{equation}
where in the last inequality we have used \eqref{assumption f2}.
By \eqref{ps3}, we can deduce that
\[
\aligned
\|\psi_n^++\psi_n^-\|_\la^2\leq
\int_M\left( f(|\psi_n|)|\psi_n|
+|\psi_n|^{2^*-1}\right)|\psi_n^+-\psi_n^-|d\vol_\ig
+o(\|\psi_n^++\psi_n^-\|_\la).
\endaligned
\]
From this and the H\"older and Sobolev inequalities we get
\begin{equation}\label{ps4a}
\aligned
\|\psi_n^++\psi_n^-\|_\la^2
 &\leq C|\psi_n|_2^2+(C+1)|\psi_n|_{2^*}^{2^*-1}|\psi_n^+-\psi_n^-|_{2^*}+o(\|\psi_n^++\psi_n^-\|_\la) \\
 &\leq C'|\psi_n|_{2^*}^2+C'|\psi_n|_{2^*}^{2^*-1}\|\psi_n^++\psi_n^-\|_\la+o(\|\psi_n^++\psi_n^-\|_\la)
\endaligned
\end{equation}
where we have used $f(s)\leq C(1 + s^{\frac2{m-1}})$. Now, by taking into account \eqref{ps4}, we get
\[
\|\psi_n^++\psi_n^-\|_\la^2
\leq C\big(1+o(\|\psi_n\|_\la^{\frac2{2^*}})\big)+C\big(1+o(\|\psi_n\|_\la^{1-\frac1{2^*}})\big)\|\psi_n^++\psi_n^-\|_\la.
\]
Note that $\dim E_\la^0<\infty$, hence any two norms on $E_\la^0$ are equivalent. Therefore we have
\begin{equation}\label{ps4b}
\|\psi_n^0\|_\la^2 = |\psi_n^0|_2^2\leq|\psi_n|_2^2 \leq C|\psi_n|_{2^*}^2 \leq C\big(1+o(\|\psi_n\|_\la^{\frac2{2^*}})\big)
\end{equation}
which implies
\[
\|\psi_n\|_\la^2\leq C\big(1+o(\|\psi_n\|_\la^{\frac2{2^*}})\big)+
C\big(1+o(\|\psi_n\|_\la^{1-\frac1{2^*}})\big)
\|\psi_n\|_\la.
\]
Now the boundedness of $(\psi_n)_n$ follows from $2^*>2$.

If $c=0$ then \eqref{ps4} and the boundedness of $(\psi_n)_n$ imply $|\psi_n|_{2^*}\to0$, hence $\psi_n\to0$ by \eqref{ps4a} and \eqref{ps4b}.
\end{proof}

The next lemma has been proved in \cite{Isobe:2011}. Let
\[
  \ce_0(\psi) = \frac12\int_M(D\psi,\psi)d\vol_\ig - \frac1{2^*}\int_M|\psi|^{2^*}d\vol_\ig.
\]
be the functional associated to \eqref{eq:geom}.

\begin{Lem}\label{Isobe}
Let $(\psi_n)_n$ be a Palais-Smale sequence for $\ce_0$ such that $\psi_n\rightharpoonup 0$ in $E$ and
\[
  \liminf_{n\to\infty}\int_M|\psi_n|^{2^*}d\vol_\ig>0.
\]
Then
\[
  \liminf_{n\to\infty}\ce_0(\psi_n)
   = \liminf_{n\to\infty}\frac1{2m}|\psi_n|_{2^*}^{2^*}
   \geq \frac1{2m}\left(\frac{m}2\right)^m \om_m.
\]
\end{Lem}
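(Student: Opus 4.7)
The plan is in two parts: first establish the identity $\liminf_n \ce_0(\psi_n) = \liminf_n \frac{1}{2m}|\psi_n|_{2^*}^{2^*}$, then derive the quantitative lower bound via a concentration-compactness / blow-up analysis.

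For the identity, I would use that the Palais-Smale property gives $\ce_0(\psi_n) = c + o(1)$ and $\ce_0'(\psi_n)[\psi_n] = o(\|\psi_n\|_0)$, together with the boundedness of $(\psi_n)$ in $E$ (which follows by repeating the argument of Lemma~4.1 with $f\equiv 0$ and $\la = 0$; the critical term alone suffices). The algebraic combination
\[
  \ce_0(\psi_n) - \tfrac12\ce_0'(\psi_n)[\psi_n] = \Bigl(\tfrac12 - \tfrac1{2^*}\Bigr) |\psi_n|_{2^*}^{2^*} = \tfrac1{2m}|\psi_n|_{2^*}^{2^*}
\]
immediately yields the first equality after passing to the $\liminf$.

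For the lower bound, the compactness of the embedding $H^{\frac12}(M,\mbs(M)) \hookrightarrow L^q$ for every $q<2^*$, combined with $\psi_n \rightharpoonup 0$, forces $|\psi_n|_q \to 0$ for each subcritical $q$, while $|\psi_n|_{2^*}^{2^*}$ stays bounded away from $0$. I would then invoke the spinorial analogue of Lions' concentration-compactness principle to conclude that, up to a subsequence, the measures $|\psi_n|^{2^*}\,d\vol_\ig$ converge weakly-$*$ on $M$ to a finite atomic measure $\sum_{j=1}^\ell \mu_j\delta_{x_j}$ with $\mu_j>0$. Around any concentration point $x_0$, choose scales $\eps_n\to 0$ so that $\int_{B_{\eps_n}(x_0)}|\psi_n|^{2^*}\,d\vol_\ig$ is a fixed positive fraction of $\mu_0$, use the Bourguignon-Gauduchon trivialization in $\ig$-normal coordinates to identify $\mbs(M)|_{B_r(x_0)}$ with the trivial bundle $B_r(0)\times\mbs_m$, and rescale
\[
   \tilde\psi_n(y) := \eps_n^{(m-1)/2}\psi_n\bigl(\exp_{x_0}(\eps_n y)\bigr).
\]
This rescaling is exactly the conformal normalization for spinors and preserves the $L^{2^*}$ norm. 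The rescaled metrics $\eps_n^{-2}\exp_{x_0}^*\ig$ converge to the Euclidean metric on compact subsets of $\R^m$, the pulled-back Dirac operator converges to $D_{\R^m}$, and standard elliptic regularity then produces a weak limit $\tilde\psi \in H^{\frac12}_{\loc}(\R^m,\mbs_m)$, nontrivial, satisfying $D_{\R^m}\tilde\psi = |\tilde\psi|^{2^*-2}\tilde\psi$ weakly on $\R^m$.

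Finally, any such solution on $\R^m$ satisfies the algebraic identity $2^*-1 = (m+1)/(m-1)$, hence $|D_{\R^m}\tilde\psi|^{2m/(m+1)} = |\tilde\psi|^{2^*}$; testing the equation against $\tilde\psi$ additionally gives $\int_{\R^m}(D_{\R^m}\tilde\psi,\tilde\psi)\,dy = \int_{\R^m}|\tilde\psi|^{2^*}\,dy$. Thus the conformally invariant functional $J$ from the introduction takes the value $J(\tilde\psi) = \bigl(\int_{\R^m}|\tilde\psi|^{2^*}\bigr)^{1/m}$. Via the stereographic conformal equivalence $(\R^m,\delta) \cong S^m\setminus\{\text{pt}\}$ and the conformal invariance of $J$, one has $J(\tilde\psi)\geq \la_{\min}^+(S^m)=\frac{m}{2}\om_m^{1/m}$, whence $\int_{\R^m}|\tilde\psi|^{2^*}\,dy \geq (m/2)^m\om_m$. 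By lower semicontinuity this mass must already be present at the concentration point, so $\mu_0 \geq (m/2)^m\om_m$, and combining with Step 1 gives the claimed bound. The principal obstacle is Step 3: carrying out the blow-up on the twisted bundle $\mbs(M)$ via Bourguignon-Gauduchon carefully enough that the rescaled spinors form a Palais-Smale-type sequence for the limit functional on $\R^m$ whose weak limit is both nontrivial and a weak solution of the critical Euler-Lagrange equation.
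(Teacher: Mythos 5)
The paper does not prove this lemma itself: it simply cites \cite{Isobe:2011}. Your Step~1, the algebraic identity $\ce_0(\psi_n)-\frac12\ce_0'(\psi_n)[\psi_n]=\frac1{2m}|\psi_n|_{2^*}^{2^*}$ combined with boundedness, is correct and elementary. Your overall strategy for the lower bound (concentration at a point, rescaling with the conformal weight $(m-1)/2$, Bourguignon--Gauduchon trivialization, limit problem on $\R^m$, comparison with the sphere invariant) is indeed the natural route and is close to what Isobe does.

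However there are three genuine gaps in the proposed Step~2--4 that are each nontrivial and that you only partially acknowledge. First, \emph{nontriviality of the blow-up limit}: choosing $\eps_n$ so that $\int_{B_{\eps_n}(x_0)}|\psi_n|^{2^*}$ equals a fixed fraction of $\mu_0$ does not by itself prevent the rescaled sequence $\tilde\psi_n$ from converging weakly to zero -- you need a vanishing-versus-concentration dichotomy in the rescaled variables plus the ``almost-solution'' property of PS sequences and local first-order elliptic estimates to upgrade to strong $L^{2^*}_{\loc}$ convergence near the origin, so that the limit actually carries the concentrated mass. Second, \emph{global integrability of the limit}: the blow-up limit $\tilde\psi$ lives a priori only in $H^{1/2}_{\loc}(\R^m)$; Fatou gives $\tilde\psi\in L^{2^*}(\R^m)$ with finite norm, but to feed it into $J$ and invoke the sphere bound you must show $\int_{\R^m}(D_{\R^m}\tilde\psi,\tilde\psi)\,dy$ is a well-defined convergent integral equal to $\int_{\R^m}|\tilde\psi|^{2^*}\,dy$, which does not follow from the weak equation alone. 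Third, \emph{the reduction to $S^m$}: the identity $\la_{\min}^+(\R^m)=\la_{\min}^+(S^m)$ and the applicability of the $J$-infimum to $\tilde\psi$ require a removable-singularity theorem (passing the $\R^m$ solution through stereographic projection to a finite-energy spinor on $S^m$ with a conical singularity at the pole), which is a theorem in its own right and needs to be cited or proved. Finally, the ``spinorial analogue of Lions' concentration-compactness'' is not off the shelf and should at least be referenced. These are exactly the issues Isobe's proof has to address carefully; as written your proposal is a correct roadmap but not a complete proof.
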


As a consequence of Lemma~\ref{Isobe}, we obtain the following compactness result below the critical value
\begin{equation}\label{eq:crit-val}
  \ga_{crit} := \frac1{2m}\left(\frac{m}2\right)^m \om_m.
\end{equation}

\begin{Prop}\label{PS-condition}
Suppose $f\in\cc[0,\infty)$ satisfies $(f_2)$. Then the functional $\cl_\la$ satisfies the $(PS)_c$-condition for any $c<\ga_{crit}$.
\end{Prop}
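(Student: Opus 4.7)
The plan is to combine Lemma~\ref{PS-bdd} with a Brezis-Lieb splitting and then invoke Lemma~\ref{Isobe} on the residual. Let $(\psi_n)\subset E$ be a $(PS)_c$-sequence for $\cl_\la$ at a level $c<\ga_{crit}$. By Lemma~\ref{PS-bdd} it is bounded, so after passing to a subsequence $\psi_n\rightharpoonup\psi_0$ weakly in $E$, strongly in $L^q$ for every $q\in[1,2^*)$, and pointwise a.e.\ on $M$. Testing $\cl'_\la(\psi_n)[\va]=o(1)$ against a fixed $\va\in E$ and letting $n\to\infty$, one sees that $\psi_0$ is a critical point: the subcritical piece $\int_M f(|\psi_n|)(\psi_n,\va)\,d\vol_\ig$ converges by Vitali's theorem using the bound $|f(s)|s\le C_\eps+\eps s^{2^*-1}$ (which follows from $(f_2)$ and continuity, together with the $L^{2^*}$-boundedness of $(\psi_n)$), while the critical piece passes to the limit because $|\psi_n|^{2^*-2}\psi_n\rightharpoonup|\psi_0|^{2^*-2}\psi_0$ weakly in $L^{(2^*)'}$ by the standard a.e.-convergence criterion.

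Setting $\phi_n:=\psi_n-\psi_0\rightharpoonup 0$ in $E$, I aim to exhibit $(\phi_n)$ as a Palais-Smale sequence for the pure critical functional $\ce_0$ at the shifted level $c-\cl_\la(\psi_0)$. The $\la$-orthogonal decomposition yields $\|\psi_n^\pm\|_\la^2=\|\psi_0^\pm\|_\la^2+\|\phi_n^\pm\|_\la^2+o(1)$; the Brezis-Lieb lemma and its gradient version give
\[
  |\psi_n|_{2^*}^{2^*}=|\psi_0|_{2^*}^{2^*}+|\phi_n|_{2^*}^{2^*}+o(1),\qquad |\psi_n|^{2^*-2}\psi_n-|\psi_0|^{2^*-2}\psi_0-|\phi_n|^{2^*-2}\phi_n\to 0 \text{ in } L^{(2^*)'};
\]
and the subcritical terms are compact: $\int_M F(|\psi_n|)\,d\vol_\ig\to\int_M F(|\psi_0|)\,d\vol_\ig$ and $f(|\psi_n|)\psi_n\to f(|\psi_0|)\psi_0$ in $L^{(2^*)'}$ (again by Vitali). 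Combined with $|\phi_n|_2\to 0$, which absorbs the $\la|\phi_n|_2^2$ contribution, these produce $\ce_0(\phi_n)\to c-\cl_\la(\psi_0)$ and $\ce_0'(\phi_n)\to 0$ in $E^*$.

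Lemma~\ref{Isobe} now dichotomizes: either $|\phi_n|_{2^*}\to 0$, or $c-\cl_\la(\psi_0)\ge\ga_{crit}$. To rule out the second alternative I show $\cl_\la(\psi_0)\ge 0$. Writing $g(s):=f(s)+s^{2/(m-1)}$ and $G(s):=\int_0^s g(t)t\,dt$, the critical point identity $\cl'_\la(\psi_0)\psi_0=0$ combined with a direct computation gives
\[
  \cl_\la(\psi_0)=\cl_\la(\psi_0)-\tfrac12\cl'_\la(\psi_0)\psi_0=\int_M\Bigl(\tfrac12 g(|\psi_0|)|\psi_0|^2-G(|\psi_0|)\Bigr)\,d\vol_\ig,
\]
and the integrand is non-negative because $s\mapsto g(s)s^2-2G(s)$ vanishes at $s=0$ and is strictly increasing (see Remark~\ref{rem:nonlinearity}(a)). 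Non-negativity of $\cl_\la(\psi_0)$ contradicts the inequality $c-\cl_\la(\psi_0)\ge\ga_{crit}$ together with $c<\ga_{crit}$, so necessarily $|\phi_n|_{2^*}\to 0$. Testing the relation $\ce_0'(\phi_n)[\phi_n^+-\phi_n^-]=o(\|\phi_n\|_\la)$, using H\"older together with $|\phi_n|_{2^*}\to 0$, and combining with the compact bound $\|\phi_n^0\|_\la=|\phi_n^0|_2\to 0$, we conclude $\|\phi_n\|_\la\to 0$.

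The main technical obstacle is twofold. First, the gradient-form Brezis-Lieb splitting for the Clifford-valued nonlinearity $|\psi|^{2^*-2}\psi$, which I would verify by adapting the scalar Brezis-Lieb proof via pointwise algebra plus Vitali, using that $|\cdot|$ is a genuine norm on each fiber of $\mbs(M)$. Second, the energy non-negativity $\cl_\la(\psi_0)\ge 0$: this is the decisive sign information, and although the proposition is stated under $(f_2)$ alone, this step tacitly appeals to the monotonicity of $g(s)s^2-2G(s)$ recorded in Remark~\ref{rem:nonlinearity}(a) under $(f_1)$--$(f_3)$; without such a monotonicity input, a nonzero critical point of $\cl_\la$ could in principle sit at arbitrarily negative energy and obstruct the dichotomy from Lemma~\ref{Isobe}.
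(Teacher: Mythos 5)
Your proposal follows the same route as the paper: boundedness via Lemma~\ref{PS-bdd}, weak convergence to a critical point $\psi_0$, the Brezis--Lieb splitting $\cl_\la(\psi_n)=\ce_0(\psi_n-\psi_0)+\cl_\la(\psi_0)+o(1)$ together with its gradient analogue, and the dichotomy from Lemma~\ref{Isobe} applied to $\va_n=\psi_n-\psi_0$. One minor structural difference is that the paper first disposes separately of the case $\psi_0=0$ (to invoke Lemma~\ref{Isobe} directly on $\psi_n$), whereas you absorb that case into the general splitting, which is cleaner. Your closing observation is a genuine improvement in rigor: the paper's step ``$\liminf\ce_0(\va_n)\ge\ga_{crit}$ together with (4.18) implies $\liminf\cl_\la(\psi_n)\ge\ga_{crit}$'' tacitly uses $\cl_\la(\psi_0)\ge0$, which is not a consequence of $(f_2)$ alone; it does follow from the identity $\cl_\la(\psi_0)=\int_M\big(\tfrac12 g(|\psi_0|)|\psi_0|^2-G(|\psi_0|)\big)\,d\vol_\ig$ and the monotonicity of $s\mapsto g(s)s^2-2G(s)$ under $(f_1)$--$(f_3)$, precisely as you point out, and these extra hypotheses are always in force where the Proposition is invoked, so the result as used in the paper is correct.
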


\begin{proof}
Let $(\psi_n)_n$ be a $(PS)_c$-sequence for $\cl_\la$ on $E$. By Lemma \ref{PS-bdd}, $(\psi_n)_n$ is bounded and $\psi_n\to0$ if (and only if) $c=0$. Now suppose $c>0$ and, up to a subsequence if necessary, $\psi_n\rightharpoonup \psi_0$ in $E$. Then we have
\begin{\equ}\label{subcritical embedding}
\psi_n\to \psi_0 \quad \text{in } L^p \text{ for any }
1\leq p<2^*
\end{\equ}
and the limit spinor $\psi_0$ satisfies
\begin{\equ}\label{limit spinor equ}
D\psi_0=\lm\psi_0+
f(|\psi_0|)\psi_0+|\psi_0|^{2^*-2}\psi_0 \quad
\text{on } M.
\end{\equ}

We claim that $\psi_0\neq0$ in $E$. Indeed, assume to the contrary that $\psi_0=0$. Then it follows from \eqref{subcritical embedding} that $(\psi_n)_n$ is also a $(PS)$-sequence for $\ce_0$. Moreover,
\[
\liminf_{n\to\infty}\frac1{2m}\int_M|\psi_n|^{2^*}d\vol_\ig
=\liminf_{n\to\infty}\cl_\la(\psi_n)
-\frac12\cl_\la'(\psi_n)[\psi_n]
=c>0.
\]
Therefore, by Lemma \ref{Isobe} we have
\[
c=\liminf_{n\to\infty}\cl_\la(\psi_n)
=\liminf_{n\to\infty}\ce_0(\psi_n)
\geq\frac1{2m}\big(\frac{m}2\big)^m \om_m
\]
which contradicts the assumption $c<\frac1{2m}\big(\frac{m}2\big)^m \om_m$.

Now let us set $\va_n=\psi_n-\psi_0$. Then $\va_n\rightharpoonup 0$ in $E$. By the compact embedding $E\hookrightarrow L^p$ for $1\leq p<2^*$, we easily get
\begin{\equ}\label{dd1}
\int_M F(|\psi_n|)d\vol_\ig=\int_M F(|\psi_0|)d\vol_\ig
+o_n(1)
\end{\equ}
and, for arbitrarily $\psi\in E$ with $\|\psi\|_\la\leq1$,
\begin{\equ}\label{dd2}
\real\int_M f(|\psi_n|)(\psi_n,\psi)d\vol_\ig=
\real\int_M f(|\psi_0|)(\psi_0,\psi)d\vol_\ig
+o_n(1).
\end{\equ}
On the other hand, arguing exactly as in \cite[Lemma 5.2]{Isobe:2011}, we obtain  the Brezis-Lieb type result for the integrand of critical part, that is,
\begin{\equ}\label{dd3}
\int_M|\psi_n|^{2^*}d\vol_\ig = \int_M |\va_n|^{2^*}d\vol_\ig + \int_M |\psi_0|^{2^*}d\vol_\ig + o_n(1)
\end{\equ}
and, for arbitrarily $\psi\in E$ with $\|\psi\|_\la\leq1$,
\begin{\equ}\label{dd4}
\real\int_M|\psi_n|^{2^*-2}(\psi_n,\psi) =
\real\int_M |\va_n|^{2^*-2}(\va_n,\psi)  +
\real\int_M |\psi_0|^{2^*-2}(\psi_0,\psi)  + o_n(1).
\end{\equ}
Therefore, combining \eqref{dd1}-\eqref{dd4}, we infer that
\begin{\equ}\label{dd5}
\cl_\la(\psi_n)=\ce_0(\va_n)+\cl_\la(\psi_0)+o_n(1)
\end{\equ}
and
\begin{\equ}\label{dd6}
\cl_\la'(\psi_n)=\ce_0'(\va_n)+o_n(1),
\end{\equ}
where we have used \eqref{limit spinor equ}, i.e.\ $\cl_\la'(\psi_0)=0$, in the last equality.

\eqref{dd5} and \eqref{dd6} imply that $(\va_n)_n$ is a $(PS)$-sequence for $\ce_0$. If $|\va_n|_{2^*}\to0$ then we easily get $\va_n\to0$ in $E$ which gives the compactness of $(\psi_n)_n$ (cf.\ Lemma~\ref{PS-bdd}). If $\liminf_{n\to\infty}|\va_n|_{2^*}>0$, then it follows from Lemma \ref{Isobe} that
\[
\liminf_{n\to\infty}\ce_0(\va_n) \geq \frac1{2m}\big(\frac{m}2\big)^m \om_m.
\]
But this, together with \eqref{dd5}, implies $\liminf_{n\to\infty}\cl_\la(\psi_n) \geq \frac1{2m}\big(\frac{m}2\big)^m \om_m$ which contradicts our assumption. Hence we must have that $(\psi_n)_n$ is compact in $E$.
\end{proof}

\section{The min-max scheme}\label{sec:minmax}
The functional $\cl_\la\in\cc^1(E)$ has the form
\[
  \cl_\la(\psi) = \frac12\left(\|\psi^+\|_\la^2-\|\psi^-\|_\la^2\right) - \ck(\psi)
\]
with
\[
  \ck(\psi) = \int_M F(|\psi|)d\vol_\ig + \frac1{2^*}\int_M|\psi|^{2^*}d\vol_\ig.
\]

We need to investigate the properties of $\ck$. In the sequel we always assume $(f_1)-(f_3)$.

\begin{Lem}\label{lem1}
 $\ck(0)=0$ and \ $\frac12 \ck'(\psi)[\psi] > \ck(\psi) > 0$ \ for every $\psi\ne0$.
\end{Lem}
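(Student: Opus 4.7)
The statement has two parts: the equality $\ck(0)=0$ and the double inequality $\frac{1}{2}\ck'(\psi)[\psi] > \ck(\psi) > 0$ for $\psi \ne 0$. The first part is trivial: since $f(0)=0$, we have $F(0)=0$ by definition of $F$, and the critical term also vanishes at $\psi=0$. The plan is to reduce everything to a pointwise inequality on $[0,\infty)$ and then integrate.

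The main computation is to rewrite $\ck$ in the compact form
\[
  \ck(\psi) = \int_M G(|\psi|)\,d\vol_\ig, \qquad
  \ck'(\psi)[\psi] = \int_M g(|\psi|)|\psi|^2\,d\vol_\ig,
\]
where $g(s):=f(s)+s^{2^*-2}$ and $G(s):=F(s)+\frac{1}{2^*}s^{2^*}$ are the functions already introduced in Remark~\ref{rem:nonlinearity}~a). Property (i) stated there — which follows from $(f_1)$--$(f_3)$ — asserts that $h(s):=g(s)s^2-2G(s)$ is strictly increasing on $[0,\infty)$. Since $h(0)=0$, this gives the pointwise inequality $h(s)>0$ for every $s>0$, i.e.
\[
  \tfrac{1}{2}g(s)s^2 - G(s) > 0 \quad\text{for all } s>0.
\]
Integrating over $M$ and using the fact that $\psi\not\equiv 0$ implies $|\psi|>0$ on a set of positive measure immediately yields $\frac{1}{2}\ck'(\psi)[\psi] - \ck(\psi) > 0$.

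For the lower bound $\ck(\psi)>0$, I would simply observe that $(f_1)$ gives $f \ge 0$ hence $F \ge 0$, so $G \ge 0$, while on the set $\{|\psi|>0\}$ of positive measure we have $\frac{1}{2^*}|\psi|^{2^*}>0$; therefore $\ck(\psi) \ge \frac{1}{2^*}\int_M |\psi|^{2^*}\,d\vol_\ig > 0$.

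The only step that requires any care is verifying that property (i) in Remark~\ref{rem:nonlinearity}~a) truly follows from $(f_1)$--$(f_3)$; I would supply a one-line derivation by computing $\frac{d}{ds}\bigl(g(s)s^2-2G(s)\bigr) = g'(s)s^2$ in the smooth case (and arguing by approximation or via the difference quotient using $(f_3)$ in general), so that strict monotonicity of $s\mapsto g(s)=f(s)+s^{2/(m-1)}$ pushes through to $h$. No further obstacle is expected, since everything reduces to a pointwise statement on $[0,\infty)$ integrated against $|\psi|$.
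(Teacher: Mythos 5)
Your proof is correct and follows essentially the same route as the paper: reduce to the pointwise inequality $\tfrac12 g(s)s^2 > G(s)$ for $s>0$ and then integrate over $M$. The paper obtains that pointwise inequality in one step by bounding the integrand, $G(s)=\int_0^s g(t)t\,dt < g(s)\int_0^s t\,dt = \tfrac12 g(s)s^2$ using the strict monotonicity of $g$ from $(f_3)$ (the displayed middle term $\int_0^1(f(s)+s^{2^*-2})t\,dt$ in the paper is a typographical slip and should carry a factor $s^2$), whereas you invoke Remark~\ref{rem:nonlinearity}~a)(i), i.e.\ that $h(s)=g(s)s^2-2G(s)$ is strictly increasing with $h(0)=0$, and then justify (i). Since for merely continuous $g$ your derivation via $h'(s)=g'(s)s^2$ needs the difference-quotient version, it is worth noting that the clean argument is exactly the paper's one-line estimate (or equivalently, for $0<s_1<s_2$, $h(s_2)-h(s_1)\ge (g(s_2)-g(s_1))s_1^2>0$ using $\int_{s_1}^{s_2}g(t)t\,dt\le g(s_2)\tfrac{s_2^2-s_1^2}{2}$), so your two cases collapse to the same monotonicity bound on the integrand; with that made explicit, your argument is complete and matches the paper's.
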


\begin{proof}
Using $(f_3)$ we obtain for $s>0$:
\[
F(s)+\frac1{2^*}s^{2^*}
 =\int_0^s f(t)t+t^{2^*-1}dt
  < \int_0^1 \big(f(s)+s^{2^*-2}\big) t\, dt \\
  =\frac12\big( f(s)s^2+s^{2^*}\big)
\]
The lemma follows immediately.
\end{proof}

\begin{Lem}\label{lem2}
  $\ck$ is strictly convex, hence weakly lower semi-continuous .
\end{Lem}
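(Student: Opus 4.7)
Set $G(s):=F(s)+\frac1{2^*}s^{2^*}=\int_0^s g(t)t\,dt$ with $g(s)=f(s)+s^{2^*-2}$, so that $\ck(\psi)=\int_M G(|\psi|)\,d\vol_\ig$. The plan is to prove strict convexity of $\ck$ by combining three ingredients:
\emph{(i)} the function $G:[0,\infty)\to[0,\infty)$ is strictly convex and strictly increasing;
\emph{(ii)} the pointwise norm $\psi\mapsto|\psi|$ on the fibres of $\mbs(M)$ is convex, and strictly convex outside the positive real ray;
\emph{(iii)} a pointwise strict inequality holds on a set of positive measure whenever $\psi_0\ne\psi_1$ in $E$.

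First, $G'(s)=g(s)s$ with $g(0)=0$ and, by $(f_3)$, $g$ strictly increasing on $[0,\infty)$. Hence $G'$ is strictly increasing (for $0\le s_1<s_2$ one has $g(s_1)s_1\le g(s_1)s_2<g(s_2)s_2$), so $G$ is strictly convex on $[0,\infty)$; it is also strictly increasing on $(0,\infty)$ because $G'>0$ there. Second, for spinors $u,v\in\mbs_x(M)$ and $t\in(0,1)$, the Cauchy--Schwarz inequality in the fibre gives
\[
  |(1-t)u+tv|^2=(1-t)^2|u|^2+t^2|v|^2+2t(1-t)\real(u,v)\le\bigl((1-t)|u|+t|v|\bigr)^2,
\]
with equality iff $\real(u,v)=|u||v|$, that is, iff $u$ and $v$ are positively proportional (one possibly zero). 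Taking square roots yields $|(1-t)u+tv|\le(1-t)|u|+t|v|$ with the same equality condition.

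Now let $\psi_0,\psi_1\in E$ and $\psi_t=(1-t)\psi_0+t\psi_1$, $t\in(0,1)$. Combining (i) with monotonicity of $G$ and (ii) gives pointwise a.e.
\[
  G(|\psi_t|)\le G\bigl((1-t)|\psi_0|+t|\psi_1|\bigr)\le(1-t)G(|\psi_0|)+tG(|\psi_1|).
\]
Assume $\psi_0\ne\psi_1$ in $E$. If $|\psi_0|\ne|\psi_1|$ on a set of positive measure, then on that set the second inequality is strict by strict convexity of $G$. Otherwise $|\psi_0|=|\psi_1|$ a.e.; since $\psi_0\ne\psi_1$, there is a positive measure set $A\subset M$ on which $\psi_0,\psi_1$ are not positively proportional and $|\psi_0|=|\psi_1|>0$. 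On $A$ the first inequality is strict and $G$ is strictly increasing at the common value $|\psi_0|>0$, so the composite inequality is strict on $A$. Integration then yields $\ck(\psi_t)<(1-t)\ck(\psi_0)+t\ck(\psi_1)$, proving strict convexity.

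For the last assertion, $\ck$ is continuous on $E$ by the Sobolev embedding $E\hookrightarrow L^{2^*}$ together with the growth estimate $F(s)\le C_\eps+\eps s^{2^*}$ recorded in Remark~2.2\,a). A convex, lower semi-continuous functional on a Banach space is weakly lower semi-continuous, which finishes the proof. The only nontrivial point is handling the case $|\psi_0|=|\psi_1|$ a.e. with $\psi_0\ne\psi_1$; this is where fibrewise strict Cauchy--Schwarz, rather than strict convexity of $G$ alone, is needed.
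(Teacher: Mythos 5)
Your proof is correct and follows the same underlying idea as the paper's one-line proof, namely that $(f_3)$ makes $G(s)=F(s)+\frac1{2^*}s^{2^*}$ strictly convex. You go beyond the paper by spelling out why strict convexity of $G$ (together with fibrewise strict Cauchy--Schwarz and strict monotonicity of $G$) actually yields strict convexity of $\ck$ — in particular you correctly identify and handle the case $|\psi_0|=|\psi_1|$ a.e.\ with $\psi_0\ne\psi_1$, which the paper passes over silently.
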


\begin{proof} Observe that $\ck(\psi)=\int_M G(|\psi|)d\vol_\ig$ and the function $G(s)=F(s)+\frac1{2^*}s^{2^*}$ is strictly convex as a consequence of $(f_3)$.
\end{proof}

Recall the decomposition $E= E_\la^+\op E_\la^0\op E_\la^-$ and let
\[
  S_\la^+ := \{\phi\in E_\la^+:\|\phi\|_\la=1\}.
\]
For $\phi\in S_\la^+$ we set
\[
  E_\la(\phi) := \R\phi\op E_\la^0\op E_\la^- \qquad\text{and}\qquad
  \ehat_\la(\phi) := \{t\phi+\chi: t\ge0,\ \chi\in E_\la^0\op E_\la^-\}\subset E_\la(\phi).
\]

\begin{Lem}\label{lem3}
  For each $\phi\in S_\la^+$ there exists a unique nontrivial critical point $\mu_\la(\phi) \in \ehat_\la(\phi)$ of the constrained functional $\cl_\la|_{\ehat_\la(\phi)}$. Moreover the following hold:
  \begin{itemize}
  \item[\rm a)] $\mu_\la(\phi)$ is the global maximum of $\cl_\la|_{\ehat_\la(\phi)}$.
  \item[\rm b)] $\mu_\la(\phi)^+$ is bounded away from $0$, i.e.\ $\|\mu_\la(\phi)^+\|_\la\ge\de$ for some $\de>0$.
  \item[\rm c)] $\mu_\la:S_\la^+\to \ehat_\la(\phi)$ is bounded on compact subsets of $S_\la^+$.
  \end{itemize}
\end{Lem}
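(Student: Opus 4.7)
The plan is to build $\mu_\la(\phi)$ as the global maximum of $\cl_\la$ on $\ehat_\la(\phi)$, in the spirit of the Szulkin--Weth version of the Nehari--Pankov argument. First I would observe that for each fixed $t\geq 0$, the map $\chi\mapsto\cl_\la(t\phi+\chi)$ on $E_\la^0\oplus E_\la^-$ is strictly concave: strict concavity in $\chi^-$ comes from the quadratic $-\tfrac12\|\chi^-\|_\la^2$, and strict concavity in $\chi^0$ is forced by the strict convexity of $\ck$ (Lemma~\ref{lem2}). Coercivity in $\chi$ follows from the $L^2$-orthogonality of $E_\la^+, E_\la^0, E_\la^-$: the identity $|t\phi+\chi|_2^2=t^2|\phi|_2^2+|\chi^0|_2^2+|\chi^-|_2^2$, combined with $\ck(\psi)\geq\tfrac{1}{2^*}|\psi|_{2^*}^{2^*}$, the bound $|\cdot|_2\leq C|\cdot|_{2^*}$ on compact $M$, and the equivalence of norms on the finite-dimensional kernel $E_\la^0$, forces $\cl_\la(t\phi+\chi)\to-\infty$ as $\|\chi\|_\la\to\infty$. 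Hence a unique maximiser $\chi_t\in E_\la^0\oplus E_\la^-$ exists, depends continuously on $t$, and I set $h(t):=\cl_\la(t\phi+\chi_t)$.

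Next I would analyse $h$ on $[0,\infty)$. The estimate $G(s)\leq C(s^p+s^{2^*})$ for some $p\in(2,2^*)$, a consequence of $f(0)=0$ and $(f_2)$, together with the uniform Sobolev bound $|\phi|_p,|\phi|_{2^*}\leq C$ on $S_\la^+$, yields $h(t)\geq\cl_\la(t\phi)\geq\tfrac{t^2}{2}-C(t^p+t^{2^*})\geq c_0>0$ for some fixed small $t_0>0$ and all $\phi\in S_\la^+$. On the other hand, the coercivity bound from the previous paragraph, applied uniformly in $\chi$, gives $\ck(t\phi+\chi)\geq c\,t^{2^*}|\phi|_2^{2^*}$, so $h(t)\to-\infty$ as $t\to\infty$. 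By continuity, $h$ attains a positive maximum at some $t^\ast>0$, and the envelope theorem identifies $\mu_\la(\phi):=t^\ast\phi+\chi_{t^\ast}$ as a nontrivial critical point of $\cl_\la|_{\ehat_\la(\phi)}$ with $\cl_\la(\mu_\la(\phi))>0$.

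The main obstacle, and the heart of (a), is to show that $\mu_\la(\phi)$ is the unique nontrivial critical point and the global maximum. For this I would prove the Szulkin--Weth type inequality: for any nontrivial critical point $\psi_0=t_0\phi+\chi_0$ and any $\psi=t\phi+\chi\in\ehat_\la(\phi)$ with $\psi\neq\psi_0$,
\[
\cl_\la(\psi_0)-\cl_\la(\psi)\geq\tfrac12\|\chi^--\chi_0^-\|_\la^2-\tfrac12(t-t_0)^2+Q(\psi,\psi_0)>0,
\]
where $Q(\psi,\psi_0)\geq 0$ is a Bregman-type remainder coming from the convexity of $\ck$, obtained by using the three critical-point equations at $\psi_0$ to cancel the first-order terms in the Taylor expansion. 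The awkward $-\tfrac12(t-t_0)^2$ is absorbed via a refined pointwise inequality for $G$ coming from the strict monotonicity of $g(s)=f(s)+s^{2^*-2}$ imposed by $(f_3)$, equivalently from Remark~\ref{rem:nonlinearity}(a)(i). I expect this to be the most delicate step, because the kernel $E_\la^0$ contributes no quadratic term to $\cl_\la$ and must be controlled entirely through the strict convexity of $\ck$, and because $g(|\psi|)\psi$ is vector-valued in the spinor bundle so the convexity inequalities have to be unpacked in terms of the hermitian metric on $\mbs(M)$.

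For part (b), the uniform lower bound $\cl_\la(\mu_\la(\phi))\geq c_0>0$ from the second paragraph combined with $\cl_\la(\mu_\la(\phi))\leq\tfrac12\|\mu_\la(\phi)^+\|_\la^2$ gives $\|\mu_\la(\phi)^+\|_\la\geq\sqrt{2c_0}=:\de$ uniformly. For part (c) I argue by contradiction: if $\phi_n\to\phi$ along a compact $K\subset S_\la^+$ with $\|\mu_\la(\phi_n)\|_\la\to\infty$, writing $\mu_\la(\phi_n)=t_n\phi_n+\chi_n^0+\chi_n^-$, the Nehari identity
\[
t_n^2=\|\chi_n^-\|_\la^2+\int_M g(|\mu_\la(\phi_n)|)|\mu_\la(\phi_n)|^2\,d\vol_\ig
\]
forces $\|\chi_n^-\|_\la\leq t_n$. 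If $t_n\to\infty$ then $L^2$-orthogonality plus $|\phi_n|_2\to|\phi|_2>0$ gives $|\mu_\la(\phi_n)|_2\geq t_n|\phi_n|_2\to\infty$; if $t_n$ is bounded, necessarily $\|\chi_n^0\|_\la\to\infty$, and finite-dimensionality of $E_\la^0$ together with $L^2$-orthogonality again forces $|\mu_\la(\phi_n)|_2\to\infty$. Either way $\ck(\mu_\la(\phi_n))\to\infty$ overpowers the quadratic part, yielding $\cl_\la(\mu_\la(\phi_n))\to-\infty$ and contradicting~(b).
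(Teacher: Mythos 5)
Your scaffolding for existence, b), and c) is sound and, aside from some repackaging, matches the paper. The two-step maximization (first over $\chi\in E_\la^0\oplus E_\la^-$ using strict concavity and coercivity, then over $t$) is a legitimate alternative to the paper's direct maximization via weak upper semi-continuity of $\cl_\la$ on $\ehat_\la(\phi)$; the deduction of b) from the uniform lower bound $\cl_\la(\mu_\la(\phi))\ge c_0>0$ together with $\cl_\la(\psi)\le\tfrac12\|\psi^+\|_\la^2$, and the contradiction argument for c) via the Nehari identity and $L^2$-orthogonality, are both essentially what the paper does.

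The genuine gap is in a), the uniqueness/strict-global-maximum step, which you yourself flag as the heart of the matter but leave as a sketch. Two problems. First, your inequality $\cl_\la(\psi_0)-\cl_\la(\psi)\geq\tfrac12\|\chi^--\chi_0^-\|_\la^2-\tfrac12(t-t_0)^2+Q$ is actually an \emph{identity} with $Q$ the Bregman divergence $\ck(\psi)-\ck(\psi_0)-\ck'(\psi_0)[\psi-\psi_0]$; the substantive claim is that this Bregman remainder strictly exceeds $\tfrac12(t-t_0)^2-\tfrac12\|\chi^--\chi_0^-\|_\la^2$, and that is exactly what you do not prove. Second, and more importantly, your parametrization $\psi=\psi_0+(t-t_0)\phi+(\chi-\chi_0)$ makes the quantity you need to absorb, namely $\tfrac12(t-t_0)^2-\tfrac12\|\chi^--\chi_0^-\|_\la^2$, a \emph{global} ($L^2$-type) quantity, while the Bregman remainder is an integral of a pointwise quantity; there is no evident way to run a ``refined pointwise inequality for $G$'' against a non-pointwise left-hand side. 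The decisive trick in the paper (following Szulkin--Weth) is to parametrize $\va=(1+s)\psi_0+\chi$ with $s\ge-1$ and $\chi\in E_\la^0\oplus E_\la^-$, i.e.\ to \emph{scale the candidate critical point itself}. After using the critical-point equations at $\psi_0$ to cancel first-order terms, this yields $\cl_\la(\va)-\cl_\la(\psi_0)=-\tfrac12\|\chi^-\|_\la^2+\int_M h(s)\,d\vol_\ig$ with $h(s)$ a pointwise function of $s$, $\psi_0(x)$, $\chi(x)$ (equation \eqref{eq:def-h} in the paper), and then strict monotonicity of $g$ from $(f_3)$ gives $h(s)(x)\le0$ with strict inequality except at $\va=\psi_0$. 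Your formulation is equivalent in the end, but without this reparametrization the pointwise argument you gesture at does not engage, so a) is unproved as written.

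A smaller inaccuracy: you claim $G(s)\le C(s^p+s^{2^*})$ for some $p\in(2,2^*)$ follows from $f(0)=0$ and $(f_2)$. It does not (take $F(s)=s^2/\log(1/s)$ near $0$). What does follow, and what both you and the paper actually need, is the weaker estimate: for every $\eps>0$ there is $C_\eps>0$ with $F(s)\le\eps s^2+C_\eps s^{2^*}$, which already gives $\cl_\la(t\phi)=\tfrac{t^2}2+o(t^2)$ uniformly on $S_\la^+$ and hence the uniform positive lower bound $c_0$.
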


\begin{proof}
We first prove that $\sup\cl_\la|_{\ehat_\la(\phi)} > 0$ is achieved. Observe that there exists a constant $C>0$ such that
\begin{\equ}\label{decomposition L2^*}
C\big( |t\phi|_{2^*}+|\chi^0|_{2^*}+|\chi^-|_{2^*}\big) \leq |t\phi+\chi|_{2^*}
\quad\text{for all $t\phi+\chi\in \ehat_\la(\phi)$}
\end{\equ}
because $\R\phi\op E_\la^0$ is of finite dimension. This and $F\ge0$ imply
\[
\cl_\la(t\phi+\chi)
 \leq \frac{t^2}2-\frac12\|\chi^-\|_\la^2 - C'\big( |t\phi|_{2^*}+|\chi^0|_{2^*}+|\chi^-|_{2^*}\big)^{2^*}
\]
on $\ehat_\la(\phi)$, hence $\cl_\la(\psi)\le 0$ if $\|\psi\|_\la$ is large. On the other hand, $\sup_{\ehat_\la(\phi)}\cl_\la\ge\al>0$ because $\cl_\la(t\phi)=\frac{t^2}2+o(t^2)$ as $t\to0$, uniformly in $\phi\in S_\la^+$. Here we used that for every $\eps>0$ there exists $C_\eps>0$ such that $F(s)\le\eps s^2+C_\eps s^{2^*}$. This together with the weak upper semi-continuity of $\cl_\la$ on $\ehat_\la(\phi)$ implies that the supremum is achieved at some $\mu_\la(\phi)$. It also follows that $\|\mu_\la(\phi)^+\|$ is bounded away from 0, and that $\mu_\la$ is bounded on compact sets.

Now we prove that any critical point $\psi\ne0$ of $\cl_\la|_{\ehat_\la(\phi)}$ is a strict global maximum of $\cl_\la|_{\ehat_\la(\phi)}$, hence it is unique. Let $\psi\in \ehat_\la(\phi)$ be a nontrivial critical point of $\cl_\la|_{\ehat_\la(\phi)}$ and $\va=(1+s)\psi+\chi\in\ehat_\la(\phi)\setminus\{\psi\}$ be an arbitrary element of $\ehat_\la(\phi)$; here $s\geq-1$ and $\chi\in E_\la^0\op E_\la^-$. We set $G(s)=F(s)+\frac1{2^*}s^{2^*}$, $g(s)=f(s)+s^{2^*-2}$ and fix $\chi$. Setting
\begin{\equ}\label{eq:def-h}
  h(s) := \frac{s^2+2s}{2}g(|\psi|)|\psi|^2 + (s+1)g(|\psi|)\Real (\psi,\chi) + G(|\psi|) - G(|(1+s)\psi+\chi|)
\end{\equ}
an elementary calculation shows:
\begin{\equ}\label{unique1}
\begin{aligned}
  \cl_\la(\va)-\cl_\la(\psi)
    &= \frac12\int_M\big((D\chi,\chi)-\la|\chi|^2\big)d\vol_\ig
          + \ck'(\psi)\big[ \frac{s^2+2s}2\psi+(1+s)\chi \big]\\
    &\hspace{1cm}
          +\ck(\psi)-\ck((1+s)\psi+\chi)\\
    &= -\frac12\|\chi^-\|_\la^2 + \int_M h(s)d\vol_\ig.
\end{aligned}
\end{\equ}
It is sufficient to prove that the integrand $h(s)(x)$ is negative if $\psi(x)\ne0$. Observe that $h(-1)=-\frac12g(|\psi|)|\psi|^2+G(|\psi|)-G(|\chi|)<0$ and $\displaystyle \lim_{s\to\infty}h(s)=-\infty$. We may assume that $h$ attains its maximum at some $s_*\in(-1,\infty)$, otherwise we are done. We distinguish $s$ and $s_*$ here and  emphasize that $s_*$ is a real function depending on $x$, and we can set $s_*(x)=-1$ provided that $h(-1)(x)$ is the extremum. We drop the argument $x\in M$ in the sequel. Then, denoted by $\va_*=(1+s_*)\psi+\chi$, we have:
\[
  0=h'(s_*)=\big(g(|\psi|)-g(|\va_*|)\big)\Real(\psi,\va_*).
\]
Now, if $\Real(\psi,\va_*)\ne0$ hypothesis $(f_3)$ implies $|\psi|=|\va_*|>0$, hence for $\va_*\ne\psi$:
\begin{\equ}\label{eq:est-h-1}
\aligned
  h(s) &\le h(s_*) = g(|\psi|)\Real\big(\psi, \frac{s_*^2+2s_*}2\psi+(1+s_*)\chi\big) \\
   &= -\frac{s_*^2}2g(|\psi|)|\psi|^2 - (1+s_*)g(|\psi|)\big(|\psi|^2-\Real(\psi,\va_*)\big)\\
   &= -\frac{1}2g(|\psi|)|\chi|^2 < 0
\endaligned
\end{\equ}
If $\Real(\psi,\va_*)=0$, i.e.\ $\Real(\psi,\chi) = -(1+s_*)|\psi|^2$ then
\begin{\equ}\label{eq:est-h-2}
\aligned
  h(s) &\le h(s_*) = -\frac{(s_*+1)^2}2g(|\psi|)|\psi|^2 - \frac12g(|\psi|)|\psi|^2 + G(|\psi|) - G(|\va_*|)  \\
        &< - \frac12g(|\psi|)|\psi|^2 + G(|\psi|)
         < 0.
\endaligned
\end{\equ}
\end{proof}

Now we consider the functional
\[
  \cm_\la: S_\la^+\to\R,\quad \cm_\la(\phi) := \cl_\la(\mu_\la(\phi)).
\]

\begin{Prop}\label{reduction}
  \begin{itemize}
  \item[\rm a)] $\cm_\la\in \cc^1(S_\la^+)$ and
      \[
        \cm_\la'(\phi)[\chi] = \|\mu_\la(\phi)^+\|_\la\,\cl_\la'(\mu_\la(\phi))[\chi] \quad \text{for all }\chi\in T_\phi(S_\la^+).
      \]
  \item[\rm b)] If $(\phi_n)_n$ is a Palais-Smale sequence for $\cm_\la$ then $(\mu_\la(\phi_n))_n$ is a Palais-Smale sequence for $\cl_\la$. If $(\psi_n)_n$ is a bounded Palais-Smale sequence for $\cl_\la$ then $\big(\frac{1}{\|\psi_n^+\|_\la}\psi_n^+\big)_n$ is a Palais-Smale sequence for $\cm_\la$.
  \item[\rm c)] $\phi\in S_\la^+$ is a critical point of $\cm_\la$ if and only if $\mu_\la(\phi)$ is a nontrivial critical point of $\cl_\la$. The corresponding critical values coincide.
  \end{itemize}
\end{Prop}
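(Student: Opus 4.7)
This is a standard Nehari--Pankov reduction for a strongly indefinite functional, and Lemmas~\ref{lem1}--\ref{lem3} together with hypotheses $(f_1)$--$(f_3)$ supply exactly what is needed: uniqueness of the fiber maximum, strict concavity of $\cl_\la$ along $\ehat_\la(\phi)$, and the uniform lower bound $\|\mu_\la(\phi)^+\|_\la\ge\de>0$. The plan is to first establish $C^1$-regularity of $\mu_\la$ by the implicit function theorem. Writing $\mu_\la(\phi)=t(\phi)\phi+\chi^0(\phi)+\chi^-(\phi)$ with $t\ge 0$, $\chi^0\in E_\la^0$, $\chi^-\in E_\la^-$, I would characterize $(t,\chi^0,\chi^-)$ as the unique zero of the map sending $(t,\chi^0,\chi^-)$ to the restriction of $\cl_\la'(t\phi+\chi^0+\chi^-)$ to $\R\phi\op E_\la^0\op E_\la^-$. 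Its partial derivative with respect to $(t,\chi^0,\chi^-)$ at this zero is the Hessian of $\cl_\la|_{\ehat_\la(\phi)}$ at $\mu_\la(\phi)$, which is negative definite: the quadratic form handles $E_\la^-$, the strict convexity of $\ck$ (Lemma~\ref{lem2}) handles $E_\la^0$ since the quadratic part vanishes there, and the bound $t\ge\de$ combined with the strict-concavity computation already carried out in Lemma~\ref{lem3} handles $\R\phi$.

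For the derivative formula in (a), given $\chi\in T_\phi(S_\la^+)$ I would pick a smooth curve $s\mapsto\phi(s)\in S_\la^+$ with $\phi(0)=\phi$ and $\phi'(0)=\chi$, and differentiate $\cm_\la(\phi(s))=\cl_\la(\mu_\la(\phi(s)))$ at $s=0$ via the chain rule:
\[
  \cm_\la'(\phi)[\chi] = \cl_\la'(\mu_\la(\phi))\Bigl[\,t'(0)\phi+t(0)\chi+\bigl.\tfrac{d}{ds}\bigr|_{s=0}\bigl(\chi^0(s)+\chi^-(s)\bigr)\Bigr].
\]
Since $\mu_\la(\phi)$ is a critical point of $\cl_\la|_{\ehat_\la(\phi)}$, the functional $\cl_\la'(\mu_\la(\phi))$ annihilates every direction in $E_\la(\phi)=\R\phi\op E_\la^0\op E_\la^-$; only the term $t(0)\chi=\|\mu_\la(\phi)^+\|_\la\chi$ survives, which is precisely the claimed formula.

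For (b) and (c), the $\inp{\cdot}{\cdot}_\la$-orthogonal splitting $E=E_\la(\phi)\op T_\phi(S_\la^+)$ is the key. Since $\cl_\la'(\mu_\la(\phi))$ vanishes on $E_\la(\phi)$ by construction, combining (a) with the uniform lower bound $\|\mu_\la(\phi)^+\|_\la\ge\de$ shows that $\|\cm_\la'(\phi)\|$ and $\|\cl_\la'(\mu_\la(\phi))\|_{E^*}$ are comparable. This immediately yields (c) and the forward direction of (b). For the converse direction of (b), given a bounded PS-sequence $(\psi_n)$ for $\cl_\la$ with $\psi_n^+$ bounded away from $0$ (which is automatic once the PS-level is positive, by Lemma~\ref{PS-bdd}), I would set $\phi_n:=\psi_n^+/\|\psi_n^+\|_\la$ and show $\psi_n-\mu_\la(\phi_n)\to 0$ in $E$: both spinors lie in $\ehat_\la(\phi_n)$, the projection of $\cl_\la'(\psi_n)$ onto $E_\la(\phi_n)$ tends to $0$, and the strict concavity of $\cl_\la|_{\ehat_\la(\phi_n)}$ near its unique maximum forces convergence to $\mu_\la(\phi_n)$. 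The main obstacle is Step~1: the non-degeneracy of the Hessian on the kernel $E_\la^0$ requires the \emph{strict} convexity in Lemma~\ref{lem2}, which is precisely why the strict-monotonicity hypothesis $(f_3)$ is imposed; without it the implicit function step would fail on $E_\la^0$.
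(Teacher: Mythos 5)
There is a genuine gap at your Step~1, and the Remark that immediately follows Proposition~\ref{reduction} in the paper flags it explicitly: ``Neither $\mu_\la$ nor $\cp_\la$ need to be of class $\cc^1$ since $\ck$ is not $\cc^2$. It is therefore surprising that $\cm_\la=\cl_\la\circ \mu_\la$ is $\cc^1$.'' Your plan begins by invoking the implicit function theorem to obtain $\cc^1$-regularity of $\mu_\la$. That requires the equation defining $\mu_\la(\phi)$ --- essentially $\cl_\la'(\cdot)\big|_{E_\la(\phi)}=0$ --- to depend $\cc^1$ on its variables, i.e.\ it requires $\cl_\la$, hence $\ck$, to be $\cc^2$. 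But Section~\ref{sec:minmax} works under $(f_1)$--$(f_3)$ only, so $f$ is merely continuous, $F$ is only $\cc^1$, and $\ck$ is not $\cc^2$. Consequently the IFT step is unavailable, and the chain-rule derivation you then use for (a), as well as the differentiability of $t(\cdot)$, $\chi^0(\cdot)$, $\chi^-(\cdot)$ that you exploit in (b), all rest on a regularity of $\mu_\la$ that the paper explicitly disclaims.

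The paper sidesteps this entirely by citing the abstract Nehari--Pankov reduction of Szulkin and Weth \cite[Corollary~33]{Szulkin-Weth:2010}, whose very purpose is to establish $\cm_\la\in\cc^1(S_\la^+)$ together with the stated derivative formula for functionals that are only $\cc^1$, using the uniqueness and strict-maximum properties supplied by Lemmas~\ref{lem1}--\ref{lem3} and a direct finite-difference estimate in place of the IFT and chain rule. Your intuition for (b) and (c) --- that $\cl_\la'(\mu_\la(\phi))$ annihilates $E_\la(\phi)$ and that the uniform bound $\|\mu_\la(\phi)^+\|_\la\ge\de$ makes $\|\cm_\la'(\phi)\|$ comparable to $\|\cl_\la'(\mu_\la(\phi))\|$ --- is the right mechanism once (a) is in hand, but (a) cannot be obtained along the route you propose.
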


\begin{proof}
  By Lemmas~\ref{lem1}-\ref{lem3} we may apply \cite[Corollary~33]{Szulkin-Weth:2010}.
\end{proof}

\begin{Rem}
  The set $\cp_\la:=\{\mu_\la(\phi):\,\phi\in S_\la^+\}$ is the Nehari-Pankov manifold associated to $\cl_\la$. It is a topological manifold homeomorphic to $S_\la^+$ via the homeomorphism $\mu_\la:S_\la^+\to\cp_\la$. Neither $\mu_\la$ nor $\cp_\la$ need to be of class $\cc^1$ since $\ck$ is not $\cc^2$. It is therefore surprising that $\cm_\la=\cl_\la\circ \mu_\la$ is $\cc^1$. A general discussion of the construction and the properties of the Nehari-Pankov manifold $\cp_\la$ in an abstract setting can be found in \cite[Chapter~4]{Szulkin-Weth:2010}.
\end{Rem}

Theorem~\ref{main thm} follows if we can show that $\inf\cm_\la < \ga_{crit}$. This will be proved in section~\ref{subsec:proof la>0} for $\la>0$. For the proof of Theorem~\ref{thm:mult} we define for $k\in\Z$ and $\la_{k-1}<\la<\la_k$ close to $\la_k$ a submanifold  $\cq_{\la,k}\subset\cp_\la$ which has codimension $d_k=\dim E_{\la_k}^0$. We can then minimize $\cl_\la$ on $\cq_{\la,k}$ in order to obtain a second solution. For $\la=\la_k$  there holds $\cq_{\la_k,k}=\cp_{\la_k}$.

\begin{Lem}\label{continuation lemma1}
For $k\in\Z$ and $\sa>0$, there exists $\de_{k,\sa}>0$ such that for $\la\in(\la_k-\de_{k,\sa},\la_k]$ and $\phi\in S_{\la_k}^+$ with $|\phi|_2^2\geq\sa$ there exists a unique global maximum point $\nu_{\la,k}(\phi)\in\widehat E_{\la_k}(\phi)$ of the constrained functional $\cl_\la|_{\widehat E_{\la_k}(\phi)}$. Moreover, $P_{\la_k}^+(\nu_{\la,k}(\phi))$ is bounded away from $0$ uniformly in $\la\in(\la_k-\de_{k,\sa},\la_k]$ and $\phi\in S_{\la_k}^+$.
\end{Lem}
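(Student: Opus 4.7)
The case $\la=\la_k$ is precisely Lemma~\ref{lem3} (set $\nu_{\la_k,k}(\phi)=\mu_{\la_k}(\phi)$), so we focus on $\la\in(\la_k-\de_{k,\sa},\la_k)$. For $\psi=t\phi+\chi^0+\chi^-\in\widehat E_{\la_k}(\phi)$ the quadratic part of $\cl_\la$ computes as
\[
  \tfrac12\int_M((D-\la)\psi,\psi)\,d\vol_\ig
   = \tfrac{t^2}{2}a_\la(\phi)+\tfrac{\la_k-\la}{2}|\chi^0|_2^2-\tfrac12\|\chi^-\|_{\la_k}^2+\tfrac{\la_k-\la}{2}|\chi^-|_2^2,
\]
with $a_\la(\phi):=1+(\la_k-\la)|\phi|_2^2\ge 1$. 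The new feature compared with Lemma~\ref{lem3} is the \emph{positive} quadratic contribution on $E_{\la_k}^0$. Using the spectral gap $\|\chi^-\|_{\la_k}^2\ge(\la_k-\la_{k-1})|\chi^-|_2^2$, for $\de_{k,\sa}\le\tfrac12(\la_k-\la_{k-1})$ the $\chi^-$-part is bounded above by $-\tfrac14\|\chi^-\|_{\la_k}^2$. Combining with \eqref{decomposition L2^*} and $G(s)\ge s^{2^*}/2^*$ yields $\cl_\la(\psi)\to-\infty$ as $\|\psi\|_{\la_k}\to\infty$ on $\widehat E_{\la_k}(\phi)$; weak upper semi-continuity of $-\ck$ (Lemma~\ref{lem2}) then produces a global maximizer $\nu_{\la,k}(\phi)$, and $\cl_\la(t\phi)>0$ for small $t>0$ provides positivity of the supremum.

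For uniqueness and the uniform lower bound on $P_{\la_k}^+\nu_{\la,k}(\phi)$, the plan is to argue by contradiction. Suppose the conclusion failed: there would exist $\la_n\nearrow\la_k$ and $\phi_n\in S_{\la_k}^+$ with $|\phi_n|_2^2\ge\sa$ along which either $\cl_{\la_n}|_{\widehat E_{\la_k}(\phi_n)}$ admits two distinct maximizers, or a maximizer $\nu_n$ satisfies $\|P_{\la_k}^+\nu_n\|_{\la_k}\to 0$. The existence estimates above give $\|\nu_n\|_{\la_k}\le R$ uniformly. The elementary inequality $\cl_{\la_n}(\nu_n)\ge\sup_{t\ge0}\cl_{\la_n}(t\phi_n)$ combined with the Sobolev embedding $|\phi_n|_{2^*}\le C\|\phi_n\|_{\la_k}=C$ yields $\cl_{\la_n}(\nu_n)\ge c(\sa)>0$ uniformly. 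Passing to subsequences, $\phi_n\rightharpoonup\phi_\infty$ weakly in $E$ and strongly in $L^2$ by compact embedding, so $|\phi_\infty|_2^2\ge\sa>0$ and $\phi_\infty\in E_{\la_k}^+\setminus\{0\}$; and $\nu_n\rightharpoonup\nu_\infty$ in $E$, with $\nu_\infty\in\widehat E_{\la_k}(\phi_\infty)$. Combining $\liminf\ck(\nu_n)\ge\ck(\nu_\infty)$ (weak l.s.c.\ of the convex functional $\ck$) with the weak l.s.c.\ of $\|\cdot\|_{\la_k}$ on $E_{\la_k}^-$, one obtains $\cl_{\la_k}(\nu_\infty)\ge c(\sa)>0$, and passing to the limit in the critical-point equations shows that $\nu_\infty$ is a nontrivial critical point of $\cl_{\la_k}|_{\widehat E_{\la_k}(\phi_\infty)}$. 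Lemma~\ref{lem3} applied to $\phi_\infty/\|\phi_\infty\|_{\la_k}\in S_{\la_k}^+$ identifies $\nu_\infty=\mu_{\la_k}(\phi_\infty/\|\phi_\infty\|_{\la_k})$ uniquely and forces $P_{\la_k}^+\nu_\infty\ne 0$, contradicting either failure scenario.

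The main obstacle is precisely this limit passage: the $\phi_n$ and $\chi_n^-$ converge only weakly in $E$ (so possibly $\|\phi_\infty\|_{\la_k}<1$), and one must carefully verify that the energy $\cl_{\la_n}(\nu_n)$ does not drop to zero across the limit and that $\nu_\infty$ genuinely solves the limiting critical-point equation on $E_{\la_k}(\phi_\infty)$ — which requires testing against asymptotically parallel directions in the varying spaces $E_{\la_k}(\phi_n)$. The hypothesis $|\phi|_2^2\ge\sa>0$ is essential throughout: it guarantees $\phi_\infty\ne 0$ after weak extraction, keeps $a_{\la_n}(\phi_n)$ nondegenerate, and underlies the quantitative uniform lower bound $\cl_{\la_n}(\nu_n)\ge c(\sa)>0$.
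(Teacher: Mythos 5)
The existence part of your argument is fine and aligns with the paper's (which just refers back to Lemma~\ref{lem3}). The problem is in your contradiction argument for uniqueness and the uniform lower bound on $P_{\la_k}^+\nu_{\la,k}(\phi)$, and it has two genuine gaps. First, for the uniqueness scenario, suppose $\nu_n^{(1)}\ne\nu_n^{(2)}$ are two maximizers of $\cl_{\la_n}|_{\widehat E_{\la_k}(\phi_n)}$. Even if you carry out the limit passage and conclude that both weak limits coincide with $\mu_{\la_k}(\phi_\infty/\|\phi_\infty\|_{\la_k})$, this yields no contradiction: two distinct sequences can perfectly well share the same weak limit. To make the argument close you would need strong $E$-convergence of the maximizers \emph{and} a uniform separation $\|\nu_n^{(1)}-\nu_n^{(2)}\|_{\la_k}\ge\de_0>0$, and neither is available here — the $\nu_n^{(i)}$ are only constrained critical points, not critical points of $\cl_{\la_n}$ on $E$, so Proposition~\ref{PS-condition} does not apply. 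Second, as you yourself note, the passage to the critical-point equation of $\cl_{\la_k}|_{\widehat E_{\la_k}(\phi_\infty)}$ in the varying spaces $\widehat E_{\la_k}(\phi_n)$, with possibly $\|\phi_\infty\|_{\la_k}<1$, remains unverified. Identifying an obstacle is not the same as overcoming it; the proof stops precisely where the real work begins.

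The paper avoids compactness altogether and argues quantitatively and directly. Exactly as in Lemma~\ref{lem3} it considers the excess $\cl_\la(\va)-\cl_\la(\psi)$ for $\va=(1+s)\psi+\chi\ne\psi$ and obtains $-\tfrac12\|\chi^-\|_{\la_k}^2+\tfrac{\la_k-\la}{2}|\chi^0|_2^2+\int_M h(s)\,d\vol_\ig$ with the same $h$ from \eqref{eq:def-h}. Since the quadratic part is now indefinite because of the positive $\chi^0$-contribution, the paper splits $M$ into $\Om_1=\{g(|\psi|)|\chi|^2\ge g(|\psi|)|\psi|^2-2G(|\psi|)\}$ and $\Om_2$ its complement. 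In Case~1 ($\int_{\Om_1}|\psi|^2\ge\tau/2$) it invokes Remark~\ref{rem:nonlinearity}~a)(ii) to get $\int_{\Om_1}g(|\psi|)|\psi|^2-2G(|\psi|)\ge c_0\tau/4$, a fixed negative contribution that beats the $O(\la_k-\la)$ positive term for $\la$ close to $\la_k$. In Case~2 ($\int_{\Om_2}|\psi|^2\ge\tau/2$) it shows $\int_{\Om_2}g(|\psi|)\,d\vol_\ig$ is bounded away from $0$ and, crucially, uses B\"ar's nodal-set theorem \cite{Bar1997} (that eigenspinors in $E_{\la_k}^0$ vanish only on a set of measure zero) to make the form $\chi\mapsto-\|\chi^-\|_{\la_k}^2-\int_{\Om_2}g(|\psi|)|\chi|^2$ negative definite on $E_{\la_k}^0\oplus E_{\la_k}^-$, again uniformly in $\phi$. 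The uniform lower bound on $P_{\la_k}^+\nu_{\la,k}(\phi)$ comes from the separate direct estimate $\sup_{\chi\in E_{\la_k}^0\oplus E_{\la_k}^-}\cl_\la(\chi)=O\big((\la_k-\la)^m\big)$ versus the positive lower bound $\cl_\la(\psi)\ge\inf\cm_{\la_k}>0$. Your plan, by contrast, relies on soft compactness and a contradiction that does not actually materialize; it would need a substantially different (and not obviously available) rigidity mechanism to work.
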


\begin{proof}
The existence of a global maximizer of $\cl_\la|_{\widehat E_{\la_k}(\phi)}$ is analogous to the one in the proof of Lemma~\ref{lem3}. We only need to show the uniqueness for $\la<\la_k$ close to $\la_k$. Let $\psi\in \widehat E_{\la_k}(\phi)$ be a global maximum of $\cl_\la|_{\widehat E_{\la_k}(\phi)}$, and observe that
\begin{\equ}\label{cl-bdd-below}
  \cl_\la(\psi)\geq  \inf\cm_{\lm_k}>0,
\end{\equ}
hence
\begin{\equ}\label{eq:int g>0}
  \int_M g(|\psi|)|\psi|^2d\vol_\ig = \|\psi^+\|_\la^2-\|\psi^-\|_\la^2 = 2\cl_\la(\psi) + 2\int_M G(|\psi|)d\vol_\ig \ge \al
\end{\equ}
 and
\begin{\equ}\label{g-G>0}
 \cl_\la(\psi)-\frac12\cl_\la'(\psi)[\psi] = \frac12\ck'(\psi)[\psi]-\ck(\psi)
     =\int_M\frac12 g(|\psi|)|\psi|^2-G(|\psi|)d\vol_\ig \geq \al.
\end{\equ}
for some $\al>0$.
These estimates hold uniformly in $\phi\in S_{\la_k}^+$, $\psi\in \widehat E_{\la_k}(\phi)$ a global maximum of $\cl_\la|_{\widehat E_{\la_k}(\phi)}$. Morover, \eqref{decomposition L2^*} implies
\[
\aligned
 \sup_{\chi\in E_{\lm_k}^0\op E_{\lm_k}^-}\cl_\la(\chi)
      &\leq  \max_{\eta\in S_\la^+\cap E_{\la_k}^0}\max_{t>0}
                   \left[ \frac{\la_k-\la}{2}t^2|\eta|_2^2-\frac{C}{2^*}t^{2^*}|\eta|_{2^*}^{2^*} \right]\\
    &= \max_{\eta\in S_\la^+\cap E_{\la_k}^0}\frac{(\la_k-\la)^m|\eta|_2^{2m}}{2m C^{m-1}|\eta|_{2^*}^{2^*(m-1)}}
      = O\big((\la_k-\la)^m\big).
\endaligned
\]
Hence, for $\la$ close to $\la_k$, we have $P_{\la_k}^+(\psi)=t\phi$ is bounded away from $0$ uniformly in $\la$ and $\phi\in S_{\la_k}^+$.

For $\phi\in S_{\la_k}^+$ with $|\phi|_2^2\geq\sa$ we consider an arbitrary element $\va=(1+s)\psi+\chi\in \widehat E_{\la_k}(\phi)\setminus\{\psi\}$, i.e.\ $s\geq-1$ and $\chi=\chi^0+\chi^-\in E_{\lm_k}^0\op E_{\lm_k}^-$ with $s\ne0$ or $\chi\ne0$. We fix $\chi$, define $h(s)$ as in \eqref{eq:def-h}, and deduce as in \eqref{unique1}:
\[
\aligned
  \cl_\la(\va)-\cl_\la(\psi)
    &= \frac12\int_M(D\chi,\chi)d\vol_\ig-\frac\lm2\int_M|\chi|^2d\vol_\ig + \int_M h(s)\,d\vol_\ig\\
    &= -\frac12\|\chi^-\|_{\la_k}^2 + \frac{\la_k-\la}{2}\int_M|\chi^0|^2d\vol_\ig + \int_M h(s)\,d\vol_\ig.
\endaligned
\]
Here the quadratic part is not negative definite. In order to prove $\cl_\la(\va)<\cl_\la(\psi)$ it is sufficient to consider $\va$ in a bounded region, i.e.~$\|\va\|_\la\le R$ for some constant $R>0$ independent of $\lm$, because $\cl_\la(\va)\to-\infty$ as $\|\va\|_\lm\to\infty$ uniformly for $\lm$ in a bounded interval. We also point out that $P_{\la_k}(\psi)=t\phi$, hence $|\psi|_2^2\geq \tau>0$ is bounded away from $0$ as mentioned above.

Observe that \eqref{eq:est-h-1} and \eqref{eq:est-h-2} imply
\[
  h(s) \le -\frac12\min\big\{g(|\psi|)|\chi|^2, \,g(|\psi|)|\psi|^2-2G(|\psi|)\big\}.
\]
Now we divide $M$ into two parts:
\[
  \Om_1=\{x\in M:\,  g(|\psi|)|\chi|^2\geq g(|\psi|)|\psi|^2-2G(|\psi|)\},
\]
and
\[
  \Om_2=\{x\in M:\,  g(|\psi|)|\chi|^2< g(|\psi|)|\psi|^2-2G(|\psi|)\}.
\]

\noindent
{\sc Case 1:} $\int_{\Om_1}|\psi|^2\,d\vol_\ig\geq \frac\tau2$.\\
In this case, setting $s_0:=\sqrt{\frac{\tau}{4\text{Vol}(M,\ig)}}$, Remark~\ref{rem:nonlinearity}~a) yields a constant $c_0>0$ such that
\[
  \int_{\Om_1}g(|\psi|)|\psi|^2-2G(|\psi|)\,d\vol_\ig \geq \int_{\{x\in\Om_1:\, |\psi|\geq s_0\}}c_0|\psi|^2 \ge \frac{c_0\tau}4.
\]
Then $\cl_\la(\va) \le \cl_\la(\psi)-\frac{c\tau}8$ for $\la_k-\la$ is small. Recall that we only need to consider $\va$, hence $\chi^0$ in a bounded region.

\medskip\noindent
{\sc Case 2:} $\int_{\Om_2}|\psi|^2d\vol_\ig\geq \frac\tau2$.\\
Here we observe that
\[
  \frac{\tau}2 \le \int_{\Om_2}|\psi|^2d\vol_\ig
   \le \left( \int_{\Om_2}|\psi|^{2^*-2}d\vol_\ig \right)^{1-\theta}\left( \int_{\Om_2}|\psi|^{2^*}d\vol_\ig \right)^{\theta}
\]
with $\theta=\frac{m-2}{m-1}$. Now the $H^{1/2}$-boundedness of $\psi$ and $g(s)=f(s)+s^{2^*-2}\ge s^{2^*-2}$ imply that
\begin{\equ}\label{eq:int g lower}
\int_{\Om_2}g(|\psi|)d\vol_\ig \geq \int_{\Om_2}|\psi|^{2^*-2}d\vol_\ig \ge \tau > 0
\end{\equ}
is bounded away from $0$. We now claim that the quadratic form
\[
  \chi\mapsto -\|\chi^-\|_{\la_k}^2-\int_{\Om_2} g(|\psi|)|\chi|^2d\vol_\ig
\]
is negative definite on $E_{\lm_k}^0\op E_{\lm_k}^-$, uniformly in $\phi$. Indeed, since $E_{\la_k}^0$ is finite dimensional and the nodal set of any eigenspinor in $E_{\lm_k}^0$ is of measure zero \cite{Bar1997}, it follows from \eqref{eq:int g lower} that there exists a positive constant $c_0>0$ such that
\[
c_0|\chi^0|_2^2\le \int_M g(|\psi|)|\chi^0|^2d\vol_\ig\le |\chi^0|_\infty^2\int_Mg(|\psi|)d\vol_\ig
\]
The constant $c_0$ can be chosen independently of $\psi$ because global maximizers of $\cl_\la|_{\widehat E_{\la_k}(\phi)}$ are bounded in $H^{1/2}$. Now we conclude:
\[
  \cl(\va)-\cl(\psi)
    \leq -\frac12\|\chi^-\|_{\la_k}^2 + \frac{\la_k-\la}{2}\int_M|\chi^0|^2d\vol_\ig - \int_{\Om_2} g(|\psi|)|\chi|^2d\vol_\ig
    <0
\]
provided $\lm_k-\lm$ is small enough.
\end{proof}

For $\sa>0$ and $\la_k-\de_{k,\si} < \la \le \la_k$, we have a map
\[
 \nu_{\la,k}=\nu_{\la,k,\si}: S_{\la_k,\si}^+=\big\{ \phi\in S_{\la_k}^+:\, |\phi|_2^2>\sa \big\} \to E.
\]
Observe that $\nu_{\la,k,\si}(\phi)=\nu_{\la,k,\si'}(\phi)$ for $|\phi|_2^2>\si>\si'$ which justifies the notation $\nu_{\la,k}$.

\begin{Rem}\label{rem:submanifold}
a) The map $\nu_{\la,k}: S_{\la_k,\si}^+\to E$ is an embedding, and its image $\cq_{\la,k,\si}:=\nu_{\la,k}(S_{\la_k,\si}^+)$ is a topological manifold homeomorphic to $S_{\la_k,\si}^+$. We claim that $\cq_{\la,k,\si}\subset\cp_\la$. Consider $\phi\in S_{\la_k,\si}^+$, and let $\nu_{\la,k}(\phi)=t\phi+\chi^0+\chi^-\in\widehat E_{\la,k}(\phi)\subset \R\phi\oplus E_{\la_k}^0\oplus E_{\la_k}^-$. This is a maximizer of $\cl_\la$ on $\widehat E_{\la_k}(\phi)$, hence on $\R^+(t\phi+\chi^0)+E_{\la_k}^- = \widehat E_{\la}\left(\frac{1}{\|t\phi+\chi^0\|_\la}(t\phi+\chi^0)\right)\subset\widehat E_{\la_k}(\phi)$. It follows that $\nu_{\la,k}(\phi)=\mu_\la\left(\frac{1}{\|t\phi+\chi^0\|_\la}(t\phi+\chi^0)\right)$.

b) The solution $\psitil_\la$ from Theorem~\ref{thm:mult} will be a minimizer of $\cl_\la$ on $\cq_{\la,k,\si}$. It is an interesting problem to determine the range of $\la<\la_k$ so that one can define $\cq_{\la,k,\si}$, for instance, whether it can be defined up to or below $\la_{k-1}$. This will depend on the geometry of $M$ and on the nonlinearity $f$.
\end{Rem}

For $\si>0$ and $\la\in(\la_k-\de_{k,\si},\la_k]$ we consider the functional
\[
  \cn_{\la,k}: S_{\lm_k,\si}^+\to\R, \quad \cn_{\lm,k}(\phi)=\cl_\la(\nu_{\la,k}(\phi)).
\]
We point out that $\nu_{\la_k,k}= \mu_{\la_k}$ and $\cn_{\la_k,k}= \cm_{\lm_k}$. The arguments from \cite[Corollary 33]{Szulkin-Weth:2010} imply the following proposition.

\begin{Prop}\label{small-reduction}
Fix $\si>0$ and $\la\in(\la_k-\de_{k,\sa},\la_k]$.
\begin{itemize}
\item[\rm a)] $\cn_{\la,k}\in \cc^1(S_{\lm_k,\si}^+)$ and
\[
  \cn_{\la,k}'(\phi)[\chi]=\big\|P_{\lm_k}^+\nu_{\la,k}(\phi)\big\|_{\la_k}\,\cl_\la'(\nu_{\la,k}(\phi))[\chi]
  \quad \text{for all } \chi\in T_\phi(S_{\lm_k,\si}^+).
\]
\item[\rm b)] If $(\phi_n)_n$ is a Palais-Smale sequence for $\cn_{\la,k}$ in $S_{\lm_k,\si}^+$ then $(\nu_{\la,k}(\phi_n))_n$ is a Palais-Smale sequence for $\cl_\la$.
\item[\rm c)] $\phi\in S_{\lm_k,\si}^+$ is a critical point of $\cn_{\la,k}$ if and only if $\nu_{\la,k}(\phi)$ is a nontrivial critical point of $\cl_\la$. The corresponding critical values coincide.
\end{itemize}
\end{Prop}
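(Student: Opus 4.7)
The plan is to adapt the argument from \cite[Corollary~33]{Szulkin-Weth:2010}, as was done for Proposition~\ref{reduction}, to the modified setting in which the maximizer $\nu_{\la,k}(\phi)$ lives on the half-space $\widehat E_{\la_k}(\phi)$ defined via the $\la_k$-orthogonal decomposition (not the $\la$-decomposition), and the parameter space $S_{\la_k,\si}^+$ is only an open subset of $S_{\la_k}^+$. Lemma~\ref{continuation lemma1} already supplies the two crucial inputs: existence and uniqueness of $\nu_{\la,k}(\phi)$, together with a uniform lower bound $\|P_{\la_k}^+\nu_{\la,k}(\phi)\|_{\la_k}\ge \de>0$.

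First I would verify continuity of $\nu_{\la,k}:S_{\la_k,\si}^+\to E$. Given $\phi_n\to\phi$ in $S_{\la_k,\si}^+$, the sequence $\nu_{\la,k}(\phi_n)$ is bounded (from the coercivity estimates used in the proofs of Lemmas~\ref{lem3} and \ref{continuation lemma1}). Any weak limit $\psi_*$ lies in $\widehat E_{\la_k}(\phi)$, and by weak upper semi-continuity of $\cl_\la$ on $\widehat E_{\la_k}(\phi)$—obtained from the convexity of $\ck$ (Lemma~\ref{lem2}) and weak continuity of the finite-dimensional quadratic part—$\psi_*$ is a maximizer, hence equals $\nu_{\la,k}(\phi)$ by uniqueness. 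Strong convergence is then upgraded via the uniform control on the finite-dimensional components in $E_{\la_k}^0\oplus E_{\la_k}^-$ plus standard testing with $\cl_\la'(\nu_{\la,k}(\phi_n))$.

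For part (a), fix $\phi\in S_{\la_k,\si}^+$ and $\chi\in T_\phi S_{\la_k}^+$, and choose a smooth curve $\phi(r)$ in $S_{\la_k,\si}^+$ with $\phi(0)=\phi$ and $\phi'(0)=\chi$. Decompose $\nu_{\la,k}(\phi(r))=t(r)\phi(r)+\eta(r)$ with $t(r)\ge0$ and $\eta(r)\in E_{\la_k}^0\oplus E_{\la_k}^-$. Because $\nu_{\la,k}(\phi)$ is an interior critical point of $\cl_\la|_{\widehat E_{\la_k}(\phi)}$ (interiority follows from the lower bound $t(0)\ge\de>0$), the differential $\cl_\la'(\nu_{\la,k}(\phi))$ vanishes on $\R\phi\oplus E_{\la_k}^0\oplus E_{\la_k}^-$. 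The chain rule then yields
\[
  \cn_{\la,k}'(\phi)[\chi]
   = \cl_\la'(\nu_{\la,k}(\phi))\bigl[t'(0)\phi+t(0)\chi+\eta'(0)\bigr]
   = t(0)\,\cl_\la'(\nu_{\la,k}(\phi))[\chi],
\]
and $t(0)=\|P_{\la_k}^+\nu_{\la,k}(\phi)\|_{\la_k}$ is the stated factor. The $\cc^1$ property of $\cn_{\la,k}$ follows from continuity of $\nu_{\la,k}$ and a Szulkin--Weth-style mean-value argument; this bypasses the fact that $\nu_{\la,k}$ itself need not be $\cc^1$, since $\ck$ is only $\cc^1$.

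Parts (b) and (c) then fall out from (a) together with the two-sided control on $t(0)=\|P_{\la_k}^+\nu_{\la,k}(\phi)\|_{\la_k}$: uniform lower bound from Lemma~\ref{continuation lemma1}, upper bound since $\nu_{\la,k}$ is bounded on compact subsets of $S_{\la_k,\si}^+$. Thus $\cn_{\la,k}'(\phi_n)\to0$ is equivalent to $\cl_\la'(\nu_{\la,k}(\phi_n))|_{E_{\la_k}^+}\to0$; combined with the automatic vanishing on $\R\phi_n\oplus E_{\la_k}^0\oplus E_{\la_k}^-$ and the continuity of the projections $P_{\la_k}^\pm, P_{\la_k}^0$ on $E$, this gives $\cl_\la'(\nu_{\la,k}(\phi_n))\to0$ in $E^*$, and similarly for the critical-point correspondence. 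The one technical point requiring care is that $\cl_\la''(0)$ is not negative definite on $E_{\la_k}^0\oplus E_{\la_k}^-$ when $\la<\la_k$, which would in principle destroy the uniqueness of the maximizer; however, this is precisely the obstacle already resolved by Lemma~\ref{continuation lemma1}, so the remaining arguments go through verbatim from the reference.
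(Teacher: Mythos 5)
Your proposal is correct and takes essentially the same approach as the paper, which states only that ``the arguments from \cite[Corollary~33]{Szulkin-Weth:2010} imply the following proposition'' without spelling out the adaptation. You correctly identify the two nonstandard features---the maximizer lives on the half-space $\widehat E_{\la_k}(\phi)$ built from the $\la_k$-decomposition rather than the $\la$-decomposition, so the quadratic part of $\cl_\la$ is not negative semi-definite on $E_{\la_k}^0\oplus E_{\la_k}^-$---and you correctly point to Lemma~\ref{continuation lemma1} as the ingredient that restores existence, uniqueness, and the uniform lower bound $\|P_{\la_k}^+\nu_{\la,k}(\phi)\|_{\la_k}\ge\de>0$ needed to run the Szulkin--Weth chain-rule and mean-value machinery (since $\ck$ is only $\cc^1$, one cannot differentiate $\nu_{\la,k}$ directly, as you note). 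The chain-rule computation giving the factor $t(0)=\|P_{\la_k}^+\nu_{\la,k}(\phi)\|_{\la_k}$ and the passage from $\cn_{\la,k}'(\phi_n)\to0$ on $T_{\phi_n}S_{\la_k}^+$ together with the automatic vanishing on $\R\phi_n\oplus E_{\la_k}^0\oplus E_{\la_k}^-$ to $\cl_\la'(\nu_{\la,k}(\phi_n))\to0$ in $E^*$ are both sound and match the intended argument.
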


Now we show that $\inf\cn_{\la,k}>\inf\cm_{\la_k}$ is achieved for $\la\le\la_k$ close to $\la_k$, and is non-increasing and continuous in $\la$, provided $\inf \cm_{\la_k}<\ga_{crit}$. In section~\ref{subsec:proof la>0} we shall prove that $\inf \cm_{\la_k}<\ga_{crit}$ holds for all $k\ge1$. This proves Theorem~\ref{thm:mult}.

\begin{Lem}\label{lem:continuation}
\begin{itemize}
\item[\rm a)] For all $\phi\in S_{\lm_k}^+$ there holds $\nu_{\la,k}(\phi)\to\mu_{\lm_k}(\phi)$ as $\la\nearrow \la_k$. The convergence is uniform on compact subsets.
\item[\rm b)] If $\inf \cm_{\la_k}<\ga_{crit}$, then there exists $\sa>0$ such that
    \[
      \inf_{S_{\la_k,\si}}\cn_{\la,k}<\ga_{crit} \quad \text{for all } \la\in(\la_k-\de_{k,\sa},\la_k],
    \]
    and the infimum is achieved.
\item[\rm c)] The function $(\la_k-\de_{k,\si},\la_k] \to \R$, $\la\mapsto \inf\cn_{\la,k}$, is non-increasing and continuous.
\end{itemize}
\end{Lem}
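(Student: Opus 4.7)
The three parts are coupled: (a) is the local engine giving continuity of the selection $\phi\mapsto\nu_{\la,k}(\phi)$ in $\la$ at $\la_k$; (b) uses (a) to locate an admissible test point witnessing $\inf\cn_{\la,k}<\ga_{crit}$ and then invokes Palais-Smale compactness; (c) extends (a) to arbitrary base points $\la_0$ and combines this with monotonicity in $\la$ of $\cl_\la$. The shared ingredients are the uniform $E$-bound on $\{\nu_{\la,k}(\phi)\}_\la$ coming from the proof of Lemma~\ref{continuation lemma1} (uniform coercivity of $\cl_\la$ on $\widehat E_{\la_k}(\phi)$ for $\la$ in a small left-neighborhood of $\la_k$) and the uniqueness of the maximizer established there.

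\textbf{Part (a).} Fix $\phi\in S_{\la_k}^+$ and let $\la_n\nearrow\la_k$. The family $\nu_{\la_n,k}(\phi)=t_n\phi+\chi_n^0+\chi_n^-$ is bounded in $E$, so along a subsequence it converges weakly to some $\psi_*=t_*\phi+\chi_*^0+\chi_*^-\in\widehat E_{\la_k}(\phi)$, with $t_n\to t_*$ and $\chi_n^0\to\chi_*^0$ in norm (finite-dimensional components). Passing to the liminf in the maximality inequality $\cl_{\la_n}(\nu_{\la_n,k}(\phi))\geq\cl_{\la_n}(\chi)$ for arbitrary $\chi\in\widehat E_{\la_k}(\phi)$, using weak lower semi-continuity of $\ck$ (Lemma~\ref{lem2}) and weak upper semi-continuity of $-\tfrac12\|\cdot\|_{\la_k}^2$ on $E_{\la_k}^-$, yields $\cl_{\la_k}(\psi_*)\geq\cl_{\la_k}(\chi)$. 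By the uniqueness in Lemma~\ref{lem3}, $\psi_*=\mu_{\la_k}(\phi)$. Strong convergence of $\chi_n^-$ is then recovered from the fact that the limiting energy identity $\cl_{\la_n}(\nu_{\la_n,k}(\phi))\to\cl_{\la_k}(\mu_{\la_k}(\phi))$ forces $\|\chi_n^-\|_{\la_n}\to\|\chi_*^-\|_{\la_k}$, which together with weak convergence gives norm convergence. Local uniformity on $S_{\la_k}^+$ is a subsequence argument combined with continuity of $\mu_{\la_k}$ (Proposition~\ref{reduction}).

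\textbf{Part (b).} Since $\inf\cm_{\la_k}<\ga_{crit}$, choose $\phi_0\in S_{\la_k}^+$ with $\cm_{\la_k}(\phi_0)<\ga_{crit}$ and set $\sa:=|\phi_0|_2^2/2>0$ (positive because $\phi_0\neq 0$). Then $\phi_0\in S_{\la_k,\sa}^+$ strictly, and part (a) gives $\cn_{\la,k}(\phi_0)\to\cm_{\la_k}(\phi_0)$ as $\la\nearrow\la_k$. Hence $\inf_{S_{\la_k,\sa}^+}\cn_{\la,k}\leq\cn_{\la,k}(\phi_0)<\ga_{crit}$ once $\la_k-\la$ is small enough. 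To attain the infimum, I apply Ekeland's variational principle on the open Hilbert manifold $S_{\la_k,\sa}^+$ to obtain a Palais-Smale sequence $(\phi_n)$ for $\cn_{\la,k}$. Proposition~\ref{small-reduction}(b) converts it into a Palais-Smale sequence $\psi_n=\nu_{\la,k}(\phi_n)$ for $\cl_\la$ at level below $\ga_{crit}$, and Proposition~\ref{PS-condition} delivers a strong limit $\psi_*$ which is a nontrivial critical point of $\cl_\la$. The uniform lower bound on $\|P_{\la_k}^+\nu_{\la,k}(\phi)\|_{\la_k}$ from Lemma~\ref{continuation lemma1} then transfers this to convergence $\phi_n\to\phi_*\in S_{\la_k}^+$. \emph{The main obstacle} is to guarantee $\phi_*$ does not escape to the boundary $\{|\phi|_2^2=\sa\}$ of $S_{\la_k,\sa}^+$; I would handle this by shrinking $\sa$ (using that $|\phi_0|_2^2=2\sa$ gives a strict interior witness with $\cn_{\la,k}(\phi_0)<\ga_{crit}$), combined with continuity of $\phi\mapsto|P_{\la_k}^+\nu_{\la,k}(\phi)|_2$ and a perturbation argument near $\phi_0$ to exclude boundary minimizers.

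\textbf{Part (c).} Monotonicity is immediate: for $\la_1<\la_2\leq\la_k$ and $\phi\in S_{\la_k,\sa}^+$,
\[
\cn_{\la_1,k}(\phi)=\max_{\chi\in\widehat E_{\la_k}(\phi)}\cl_{\la_1}(\chi)\geq\cl_{\la_1}(\nu_{\la_2,k}(\phi))\geq\cl_{\la_2}(\nu_{\la_2,k}(\phi))=\cn_{\la_2,k}(\phi),
\]
because $\la\mapsto\cl_\la(\psi)$ is non-increasing (its $\la$-derivative is $-\tfrac12|\psi|_2^2\leq0$); taking the infimum over $\phi$ preserves monotonicity. The argument of part (a) applies verbatim at any $\la_0\in(\la_k-\de_{k,\sa},\la_k]$, with $\nu_{\la_0,k}$ replacing $\mu_{\la_k}$ and the uniqueness again supplied by Lemma~\ref{continuation lemma1}, so $\la\mapsto\cn_{\la,k}(\phi)$ is pointwise continuous. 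Left continuity of $\inf\cn_{\la,k}$ at $\la_0$ follows by an $\eps$-minimizer argument combined with monotonicity: for $\eps>0$ pick $\phi_\eps$ with $\cn_{\la_0,k}(\phi_\eps)<\inf\cn_{\la_0,k}+\eps$, and use pointwise continuity to get $\limsup_{\la\nearrow\la_0}\inf\cn_{\la,k}\leq\inf\cn_{\la_0,k}+\eps$. Right continuity at $\la_0<\la_k$ uses that the infimum is attained (part (b)): if $\phi_n$ realizes $\inf\cn_{\la_n,k}$ for $\la_n\searrow\la_0$, then Proposition~\ref{PS-condition} extracts a strongly convergent subsequence of $\psi_n=\nu_{\la_n,k}(\phi_n)$ with limit $\psi_*=\nu_{\la_0,k}(\phi_*)$, yielding $\inf\cn_{\la_0,k}\leq\cn_{\la_0,k}(\phi_*)=\lim_n\inf\cn_{\la_n,k}$, which together with monotonicity forces equality. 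The same boundary caveat as in (b) applies to the limit $\phi_*$ and is handled by the same shrinking-of-$\sa$ device.
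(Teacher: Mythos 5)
Your part (a) is essentially the paper's argument, stated with a slightly different emphasis: you pass to the limit in the maximality inequality $\cl_{\la_n}(\nu_{\la_n,k}(\phi))\ge\cl_{\la_n}(\chi)$ using weak upper semi-continuity of $\cl_{\la_k}$ on $\widehat E_{\la_k}(\phi)$, while the paper combines the upper bound $\limsup\cn_{\la,k}(\phi)\le\cl_{\la_k}(\psi_0)\le\cm_{\la_k}(\phi)$ with the monotonicity bound $0\le\cn_{\la,k}(\phi)-\cm_{\la_k}(\phi)\le\frac{\la_k-\la}{2}|\psi_\la|_2^2$ to pin down both the limit and the strong convergence of $\chi^-_\la$. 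Both routes work; the paper's two-sided squeeze is what directly gives the norm convergence of $P_{\la_k}^-\chi_\la$ without a separate energy-identity argument.

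The genuine problem is in part (b). You correctly identify that Ekeland's principle produces a Palais--Smale sequence that might drift to the boundary $\{|\phi|_2^2=\sa\}$ of $S_{\la_k,\sa}^+$, but your proposed fix (a perturbation argument near a single test point $\phi_0$, plus shrinking $\sa$) does not close the gap: there is no reason a minimizing sequence for $\cn_{\la,k}$ has to stay near $\phi_0$ if $\phi_0$ is merely one point with $\cm_{\la_k}(\phi_0)<\ga_{crit}$, and shrinking $\sa$ only relocates the boundary, it does not keep the sequence away from it. The paper's missing idea is to work with the entire \emph{set of minimizers} $A=\{\phi:\cm_{\la_k}(\phi)=c_{\la_k}\}$ rather than one point. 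Since $c_{\la_k}<\ga_{crit}$ and PS holds below $\ga_{crit}$, $A$ is nonempty and compact; one chooses $\rho>0$ so that $U:=U_\rho(A)$ has compact closure inside $S_{\la_k,\si}^+$ for some $\si>0$, and $\al>0$ with $c_{\la_k}+3\al<\ga_{crit}$ and $\inf\cm_{\la_k}(S_{\la_k}^+\setminus U)\ge c_{\la_k}+2\al$ (again by PS). Monotonicity of $\cl_\la$ in $\la$ together with part (a) then gives, for $\la$ close to $\la_k$, the uniform wedge $\inf\cn_{\la,k}(S_{\la_k,\sa}^+\setminus U)\ge c_{\la_k}+2\al$ and $\inf\cn_{\la,k}(A)\le c_{\la_k}+\al$, forcing every minimizing sequence into $U$ and hence away from the boundary, after which PS delivers the minimizer. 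This ``mountain around $A$'' structure is exactly what you lack, and it also makes your part (c) go through cleanly: since minimizers of $\cn_{\la_n',k}$ are confined to $\overline U\subset S_{\la_k,\si}^+$, the strong limit $\phi$ you obtain in the right-continuity argument automatically lies inside $S_{\la_k,\si}^+$, a point your proof leaves dangling. Apart from this issue your monotonicity computation and the left/right continuity scheme in (c) coincide with the paper's.
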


\begin{proof}
a) We fix $\phi\in S_{\lm_k}^+$ and consider $\psi_\lm=\nu_{\la,k}(\phi)$. Observe that $\psi_\lm=t_\lm\phi+\chi_\lm$ with $t_\lm>0$ and $\chi_\lm\in E_{\lm_k}^0\op E_{\lm_k}^-$. By the boundedness of $\{\psi_\lm\}$, we have $\psi_\lm\rightharpoonup \psi_0$ with $t_\lm\to t_0$ and $\chi_\lm\rightharpoonup \chi_0$ as $\lm\nearrow \lm_k$. Then the weak lower semi-continuity of $|\cdot|_2$ and $\ck$ on $\widehat E_k(\phi)$ implies
\[
  \limsup_{\lm\nearrow\lm_k}\cn_{\la,k}(\phi)
   \leq \frac12\int_M(D\psi_0,\psi_0)d\vol_\ig - \frac{\lm_k}2\int_M|\psi_0|^2d\vol_\ig - \ck(\psi_0)
   \le \cm_{\la_k}(\phi).
\]
On the other hand, the monotonicity of $\cl_\la$ with respect to $\lm$ yields
\[
  0 \leq \cn_{\la,k}(\phi)-\cm_{\lm_k}(\phi)\leq \frac{\lm_k-\lm}{2}|\psi_\lm|_2^2\to0 \quad \text{as } \lm\nearrow\lm_k.
\]
It follows that $\psi_0=\mu_{\lm_k}(\phi)$ and
\[
\lim_{\lm\nearrow\lm_k}\big\|P_{\lm_k}^-\chi_\lm\big\|_{\lm}^2=\big\|P_{\lm_k}^-\chi_0\big\|_{\lm_k}^2.
\]
Since $E_{\lm_k}^0$ is of finite dimension, we have $\psi_\lm\to\mu_{\lm_k}(\phi)$ as
$\lm\nearrow\lm_k$.

b) Since $c_{\la_k}:=\inf \cm_{\la_k}<\ga_{crit}$, the infimum is achieved and the set of minimizers
\[
  A:=A_{\la_k} := \{\phi\in S_{\la_k}^+: \cm_{\la_k}(\phi)=c_{\la_k}\}
\]
is compact. Let $U_\rho(A)\subset S_{\la_k}^+$ be the $\rho$-neighborhood of $A$ with respect to $\|\cdot\|_{\la_k}$. We choose $\rho>0$ so that the $L^2$-norm is bounded away from $0$ on $U:=U_\rho(A)$, that is $U\subset S_{\la_k,\si}$ for some $\si>0$. Using again the Palais-Smale condition below $\ga_{crit}$ there exists $\al>0$ with $c_{\la_k}+3\al<\ga_{crit}$ and such that
\[
  \inf \cm_{\la_k}(S_{\la_k}^+\setminus U) \ge c_{\la_k}+2\al.
\]
The sets $B:=\mu_{\la_k}(A)$ and $V:=\mu_{\la_k}(U)$ are contained in the Nehari-Pankov manifold $\cp_{\la_k}$ from Remark~\ref{rem:submanifold}. By definition there holds
\[
  \inf\cl_{\la_k}\big(\cp_{\la_k}\setminus V\big) \ge c_{\la_k}+2\al\quad\text{and}\quad
   \inf\cl_{\la_k}(A) = c_{\la_k}.
\]
Using the monotonicity of $\cl_\la$ with respect to $\la$ we obtain for $\la<\la_k$ close to $\la_k$
\[
\inf\cl_{\la}\big( \cq_{\la,k,\sa}\setminus V_\la \big)\ge c_{\la_k}+2\al \quad \text{and} \quad
\inf\cl_{\la_k}(B_\la) \le c_{\la_k}+\al
\]
where $V_\la=\nu_{\la,k}(U)$ and $B_\la=\nu_{\la,k}(A)$. This implies
\[
\inf\cn_{\la,k}\big( S_{\la_k,\sa}^+\setminus U \big)\ge c_{\la_k}+2\al
\quad \text{and} \quad
\inf\cn_{\la,k}(A)\le c_{\la_k}+\al.
\]
Then it follows that $\cn_{\la,k}$ achieves its infimum on $S_{\la_k,\si}$ in the set $U$.

c) The function $\inf\cn_{\la,k}$ is non-increasing in $\la$ because $\cl_\la(\psi)$ is decreasing with respect to $\la$. For a given $\la\in(\la_k-\de_{k,\si},\la_k]$ there exists a spinor field $\phi\in S_{\la_k}^+$ such that
$\cn_{\la,k}(\phi)=\inf\cn_{\la,k}$. Then we have for $\la'<\la$:
\[
  \inf\cn_{\la,k} \le \inf\cn_{\la',k} \le \cn_{\la',k}(\phi) \to \cn_{\la,k}(\phi)=\inf\cn_{\la,k} \quad\text{as $\lm'\nearrow\lm$,}
\]
hence $\la\mapsto \inf\cn_{\la,k}$ is continuous from the left. In order to prove continuity from the right at $\la\in(\la_k-\de_{k,\si},\la_k)$, consider a sequence $\la_n'\searrow\la$ and let $\phi_n\in S_{\la_k}^+$ be a minimizer of $\cn_{\la_n',k}$. Then $\psi_n:=\nu_{\la'_n,k}(\phi_n)$ is a critical point of $\cl_{\la'_n}$ and $\cl_{\la_n'}(\psi_n)=\cn_{\la_n',k}(\phi_n)=\inf\cn_{\la_n',k}\le\inf\cn_{\la,k}$ , hence $(\psi_n)_n$ is a Palais-Smale sequence for $\cl_{\la}$ at a level $c=\lim_{n\to\infty}\inf\cn_{\la_n',k}\le\inf\cn_{\la,k}<\ga_{crit}$. It follows that $\psi_n\to\psi$ along a subsequence, and $\psi$ is a critical point of $\cl_\la$ at the level $c$. Equivalently, $\phi_n\to\phi \in S_{\la_k}^+$ with $\psi=\nu_{\la,k}(\phi)$, and $\phi$ is a critical point of $\cn_{\la,k}$. This implies $c\ge \inf\cn_{\la,k}$, and the continuity from the right follows.
\end{proof}

\section{Test spinors and auxiliary estimates}\label{Auxiliary estimates}
Our proof relies on the construction of a test spinor on $M$ in order to show $\inf\cm_\la < \ga_{crit}$ under the conditions of Theorem~\ref{main thm}. The test spinor comes from a spinor on $\R^m$ being cut-off and transplanted to $M$ so that it has support in a small neighborhood of an arbitrary point $p_0\in M$. We first need to recall a construction from the paper \cite{AGHM} of Ammann et al.

To begin with we fix $\psi_0\in\mbs_m$ with $|\psi_0| = m^{\frac{m-1}2}$ arbitrarily, set $\mu(x)=\frac1{1+|x|^2}$ for $x\in\R^m$ and define
\[
  \psi(x)=\mu(x)^{\frac{m}2}(1-x)\cdot\psi_0
\]
where. Then it is standard to verify that
\begin{\equ}\label{t1}
  D_{\R^m}\psi=\frac{m}2\mu\psi
\end{\equ}
and
\begin{\equ}\label{t2}
  |\psi| = m^{\frac{m-1}2}\mu^{\frac{m-1}{2}} = \left(\frac{m}{1+|x|^2}\right)^{\frac{m-1}{2}}.
\end{\equ}
We choose $\de<i(M)/2$ where $i(M)>0$ is the injectivity radius of $M$. Let $\eta:\R^m\to\R$ be a smooth cut-off function satisfying $\eta(x)=1$ if $|x|\le\de$ and $\eta(x)=0$ if $|x|\ge2\de$. Now we define $\psi_\vr:\R^m\to\mbs_m$ by
\begin{\equ}\label{test spinor}
  \va_\vr(x)=\eta(x)\psi_\vr(x) \quad \text{where}\quad \psi_\vr(x)=\vr^{-\frac{m-1}2}\psi(x/\vr).
\end{\equ}

In order to transplant the test spinor on $M$, we recall the Bourguignon-Gauduchon-trivialization. Here we fix $p_0\in M$ arbitrarily, and let $(x_1,\dots,x_m)$ be the normal coordinates given by the exponential map
\[
  \exp_{p_0}: \R^m\cong T_{p_0}M\supset U \to V\subset M,\quad x \mapsto p = \exp_{p_0}(x).
\]
For $p\in V$ let $G(p)=(\ig_{ij}(p))_{ij}$ denote the corresponding metric at $p$. Since $G(p)$ is symmetric and positive definite, the square root
$B(p)=(b_{ij}(p))_{ij}$ of $G(p)^{-1}$ is well defined, symmetric and positive definite. It can be thought of as linear isometry
\[
  B(p): (\R^m\cong T_{\exp_{p_0}^{-1}(p)}U,\ig_{\R^m}) \to (T_pV,\ig).
\]
We obtain an isomorphism of $SO(m)$-principal bundles:
\begin{displaymath}
\xymatrix{
  P_{SO}(U,\ig_{\R^m}) \ar[r]^{\displaystyle\phi}  \ar[d] & P_{SO}(V,\ig) \ar[d] \\
T_{p_0}M \supset U\ar[r]^{\ \ \displaystyle\exp_{p_0}} & V \subset M}
\end{displaymath}
where $\phi(y_1,\dots,y_m) = (By_1,\dots,By_m)$ for an oriented frame $(y_1,\dots,y_m)$ on $U$. Notice that $\phi$ commutes with the right action of $SO(m)$, hence it induces an isomorphism of spin structures:
\begin{displaymath}
\xymatrix{
Spin(m)\times U  =P_{Spin}(U,\ig_{\R^m}) \ar[r] \ar[d] & P_{Spin}(V,\ig) \subset P_{Spin}(M) \ar[d]\\
T_{p_0}M\supset U\ar[r]^{\ \ \displaystyle\exp_{p_0}} & V \subset M}
\end{displaymath}
Thus we obtain an isomorphism between the spinor bundles $\mbs(U)$ and $\mbs(V)$:
\begin{equation}\label{spin-iso}
  \mbs(U) := P_{Spin}(U,\ig_{\R^m})\times_\rho \mbs_m \longrightarrow \mbs(V) := P_{Spin}(V,\ig)\times_\rho \mbs_m \subset \mbs(M)
\end{equation}
where $(\rho,\mbs_m)$ is the complex spin representation.

Setting $e_i=B(\pa_i)=\sum_{j}b_{ij}\pa_j$ we obtain an orthonormal frame $(e_1,\dots, e_m)$ of $(TV,\ig)$. In order to simplify the notation, we use $\nabla$ and $\bar\nabla$, respectively, for the Levi-Civita connections on $(TU,\ig_{\R^m})$ and $(TV,\ig)$ and for the natural lifts of these connections to the spinor bundles $\mbs(U)$ and $\mbs(V)$, respectively. For the Clifford multiplications on these bundles, we shall write ``$\cdot$" in both cases, that is,
\[
  e_i\cdot\bar\psi=B(\pa_i)\cdot\bar\psi=\ov{\pa_i\cdot\psi}.
\]

Now a spinor $\va\in\Ga(\mbs(U))$ corresponds via the isomorphims \eqref{spin-iso} to a spinor $\bar\phi\in\Ga(\mbs(V))$. In particular, since the spinors $\va_\vr\in\Ga(\mbs(U))$ from \eqref{test spinor} have compact support in $U$ they correspond to spinors $\bar\va_\vr\in\Ga(\mbs(M))$ with compact support in $V$. These are not quite our test spinors, because they do not lie in $\cm$. In the rest of this subsection we prove some estimates that will be needed in order to control the critical values in the next section.

We write $D$ and $\bar D$ for the Dirac operators acting on $\Ga(\mbs(U))$ and $\Ga(\mbs(V))$, respectively. By \cite[Proposition 3.2]{AGHM} there holds
\begin{\equ}\label{cut-off spinor identity}
  \bar D \bar\va_\vr = \ov{D\va_\vr}+W\cdot\bar\va_\vr + X\cdot\bar\va_\vr + \sum_{i,j}(b_{ij}-\de_{ij})\ov{\pa_i\cdot\nabla_{\pa_j}\va_\vr}
\end{\equ}
with $W\in\Ga(Cl(TV))$ and $X\in\Ga(TV)$ given
\[
  W = \frac14\sum_{\substack{i,j,k \\ i\neq j\neq k\neq i}}\sum_{\al,\bt} b_{i\al}(\pa_{\al}b_{j\bt})b_{\bt k}^{-1}e_i\cdot e_j\cdot e_k,
\]
and
\[
  X = \frac14\sum_{i,k}\big( \bar\Ga_{ik}^i-\bar\Ga_{ii}^k\big)e_k = \frac12\sum_{i,k} \bar\Ga_{ik}^i e_k;
\]
here $(b_{ij}^{-1})_{ij}$ denotes the inverse matrix of $B$, and $\bar\Ga_{ij}^k:=\ig(\bar\nabla_{e_j}e_j,e_k)$. In the sequel we identify $x\in\R^m$ with $p=\exp_{p_0}x\in M$ for notational convenience. As remarked in \cite{AGHM, Isobe:2011}, observing that $B=(G^{-1})^{\frac12}$ and $G=I+O(|x|^2)$ as $|x|\to0$, we deduce
\begin{\equ}\label{WX}
  b_{ij}=\de_{ij}+O(|x|^2), \quad W=O(|x|^3) \quad \text{and} \quad X=O(|x|) \quad \text{as } |x|\to0.
\end{\equ}

In the sequel we use the notation $f_\vr\lesssim g_\vr$ for two functions $f_\vr$ and $g_\vr$, when there exists a constant $C>0$ independent of $\vr$ such that $f_\vr\leq C g_\vr$.

\begin{Lem}\label{derivative-esti}
Let $\bar\va_\vr\in\mbs(V)$ be as above and set $\bar R_\vr:=\bar D\bar\va_\vr-|\bar\va_\vr|^{2^*-2}\bar\va_\vr$. Then
\[
  \|\bar\va_\vr\|_{E_\la^*}
    \lesssim
      \begin{cases}
        \vr^{\frac12}  &\text{if } m=2,\\
        \vr|\ln\vr|^{\frac23} &\text{if } m=3,\\
        \vr   &\text{if } m\geq4,
      \end{cases}
\]
and
\[
  \|\bar R_\vr\|_{E_\la^*}
    \lesssim\begin{cases}
        \vr^{\frac{m-1}2} &\text{if } 2\leq m\leq 4, \\
        \vr^2|\ln\vr|^{\frac35} &\text{if } m=5, \\
        \vr^2 &\text{if } m\geq6.
      \end{cases}
\]
\end{Lem}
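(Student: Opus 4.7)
The plan is to use the duality $E_\la^* = H^{-1/2}(M,\mbs(M))$: since $H^{\frac12}\hookrightarrow L^{2^*}$ with $2^* = 2m/(m-1)$, one has the continuous inclusion $L^p\hookrightarrow E_\la^*$ for $p:=2m/(m+1) = (2^*)'$. Thus it suffices to bound the $L^p$ norms $|\bar\va_\vr|_p$ and $|\bar R_\vr|_p$.

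For the first estimate I would use the pointwise bound $|\bar\va_\vr(x)|\le |\psi_\vr(x)|\lesssim \vr^{(m-1)/2}(\vr^2+|x|^2)^{-(m-1)/2}$ together with the rescaling $x=\vr y$, reducing $|\bar\va_\vr|_p^p$ to
\[
   C\vr^p\int_{|y|\le 2\de/\vr}(1+|y|^2)^{-(m-1)p/2}\,dy.
\]
At infinity the integrand decays like $|y|^{-(m-1)p}$, and $(m-1)p = 2m(m-1)/(m+1)$ exceeds $m$ exactly when $m\ge 4$. Hence the integral is bounded for $m\ge 4$, diverges logarithmically (as $\ln(1/\vr)$) for $m=3$, and polynomially (as $\vr^{-2/3}$) for $m=2$. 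Taking $p$-th roots yields the three regimes $\vr$, $\vr|\ln\vr|^{2/3}$, $\vr^{1/2}$ claimed in the lemma.

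For $\bar R_\vr$ I would apply \eqref{cut-off spinor identity} together with the identity $D\psi_\vr=|\psi_\vr|^{2^*-2}\psi_\vr$ satisfied by $\psi_\vr$ on $\R^m$, yielding the decomposition
\[
 \bar R_\vr = \overline{\grad\eta\cdot\psi_\vr} + \eta(1-\eta^{2^*-2})|\psi_\vr|^{2^*-2}\overline{\psi_\vr} + W\cdot\bar\va_\vr + X\cdot\bar\va_\vr + \sum_{i,j}(b_{ij}-\de_{ij})\overline{\pa_i\cdot\nabla_{\pa_j}\va_\vr}.
\]
The first two terms are cut-off errors supported in the shell $\de\le|x|\le 2\de$, where $|\psi_\vr|\lesssim \vr^{(m-1)/2}$, so their $L^p$ norms are $O(\vr^{(m-1)/2})$, already within the claimed bound in every dimension. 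For the transplantation errors I would combine the asymptotics in \eqref{WX} with the pointwise estimates $|\psi_\vr|\lesssim \vr^{(m-1)/2}(\vr^2+|x|^2)^{-(m-1)/2}$ and $|\nabla\psi_\vr|\lesssim \vr^{(m-1)/2}(\vr^2+|x|^2)^{-m/2}$. After rescaling $x=\vr y$, both $X\cdot\bar\va_\vr$ and the derivative term produce integrals whose integrand decays like $|y|^{(2-m)p}$ at infinity. The critical integrability threshold $(m-2)p = m$, with $p=2m/(m+1)$, is crossed precisely at $m=5$: the rescaled integral is bounded for $m\ge 6$, logarithmic for $m=5$, and polynomial for $2\le m\le 4$. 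A routine case-by-case computation then delivers the bounds $\vr^{(m-1)/2}$ for $2\le m\le 4$, $\vr^2|\ln\vr|^{3/5}$ for $m=5$, and $\vr^2$ for $m\ge 6$; the $W\cdot\bar\va_\vr$ term carries an extra factor $|x|^3$ and is strictly dominated.

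The main technical obstacle is the bookkeeping at the boundary dimensions $m=3$ in the first estimate and $m=5$ in the second, where the precise power of the logarithm comes out of the marginal integrability after rescaling; both the exponent of $\vr$ and the exponent of $|\ln\vr|$ have to be tracked with care. A related subtle point is to confirm that in every dimension the $X$ or derivative contributions are never dominated by the $W$ or cutoff terms, which requires comparing several closely similar $\vr$-powers and verifying that the pointwise estimate $|\nabla\psi_\vr|\lesssim \vr^{(m-1)/2}(\vr^2+|x|^2)^{-m/2}$ (one power of decay better than $\psi_\vr$ itself) is used in the derivative term.
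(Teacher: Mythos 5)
Your proposal is correct and follows essentially the same route as the paper's proof: the dual norm is controlled via the embedding $L^{2m/(m+1)}\hookrightarrow E_\la^*$ (equivalently, H\"older against $|\psi|_{2^*}$), the remainder $\bar R_\vr$ is decomposed using \eqref{cut-off spinor identity} and the identity for $D\psi_\vr$ into the same cut-off and transplantation error terms (you merge the paper's $A_5$ and $A_6$ into one derivative term, which is harmless), and each term is estimated by rescaling and tracking the decay exponent of the integrand, with logarithms appearing exactly at the marginal dimensions $m=3$ and $m=5$. The only cosmetic inaccuracy is the claim that the $W$-term is \emph{strictly} dominated: for $2\le m\le 4$ both the $W$- and $X$-contributions give the same order $\vr^{(m-1)/2}$, so they are merely comparable, not dominated; but this does not change the conclusion, since $\vr^{(m-1)/2}$ is the asserted bound in those dimensions.
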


\begin{proof}
For $\psi\in E$ with $\|\psi\|\leq1$ and $\vr$ small we have
\begin{equation}\label{A0}
\begin{aligned}
\bigg|\int_M(\bar\va_\vr,\psi)d\vol_\ig\bigg|
 &= \bigg|\int_{B_{2\de}(p_0)}(\bar\va_\vr,\psi)d\vol_\ig\bigg|
  \leq \bigg(\int_{B_{2\de}(p_0)}|\bar\va_\vr|^{\frac{2m}{m+1}}d\vol_\ig\bigg)^{\frac{m+1}{2m}}|\psi|_{2^*}  \\
 &\lesssim \bigg(\int_{|x|\leq2\de}|\psi_\vr|^{\frac{2m}{m+1}}dx\bigg)^{\frac{m+1}{2m}}
  \lesssim \bigg(\vr^{\frac{2m}{m+1}}\int_0^{\frac{2\de}\vr}\frac{r^{m-1}}{(1+r^2)^{\frac{m(m-1)}{m+1}}}dr\bigg)^{\frac{m+1}{2m}}   \\
 &\lesssim  \begin{cases}
              \vr^{\frac12} &\text{if } m=2, \\
              \vr|\ln\vr|^{\frac23} &\text{if } m=3,\\
              \vr &\text{if } m\geq4,
            \end{cases}
\end{aligned}
\end{equation}
This implies the estimate on $\|\bar\va_\vr\|_{E_\la^*}$.

In order to estimate $\bar R_\vr$, \eqref{t1} and \eqref{t2} yield
\begin{\equ}\label{identity-D}
\aligned
  D\va_\vr &= \nabla\eta\cdot\psi_\vr+\eta D\psi_\vr\\
           &=\nabla\eta\cdot\psi_\vr+|\va_\vr|^{2^*-2}\va_\vr+(\eta-\eta^{2^*-1})|\psi_\vr|^{2^*-2}\psi_\vr.
\endaligned
\end{\equ}
Using \eqref{cut-off spinor identity}, we obtain
\[
  \bar R_\vr=A_1+A_2+A_3+A_4+A_5+A_6
\]
where
\begin{eqnarray*}
&&A_1=\ov{\nabla\eta\cdot\psi_\vr},\\
&&A_2=(\eta-\eta^{2^*-1})|\bar\psi_\vr|^{2^*-2}\bar\psi_\vr,\\
&&A_3= \eta W\cdot\bar\psi_\vr,\\
&&A_4=\eta X\cdot\bar\psi_\vr,\\
&&A_5=\eta \sum_{i,j} (b_{ij}-\de_{ij})\ov{\pa_i\cdot\nabla_{\pa_j}\psi_\vr},\\
&&A_6=\sum_{i,j}(b_{ij}-\de_{ij})\pa_j\eta\,\ov{\pa_i\cdot\psi_\vr}.
\end{eqnarray*}

In the following estimates we use that the support of $\eta$ is contained in $B_{2\de}(0)\subset\R^m$. Analogous to \eqref{A0}, using \eqref{t2} and \eqref{WX} we estimate:\\

$\displaystyle \begin{aligned}
\|A_1\|_{E_\la^*}
  &\lesssim&\left(\int_{B_{2\de}(p_0)}\big|\ov{\nabla\eta\cdot\psi_\vr}\big|^{\frac{2m}{m+1}}d\vol_\ig \right)^{\frac{m+1}{2m}}
   \lesssim&\left(\int_{\de\leq|x|\leq2\de}|\psi_\vr|^{\frac{2m}{m+1}}dx \right)^{\frac{m+1}{2m}}  \\
  &\lesssim&\left( \vr^{\frac{2m}{m+1}}\int_{\frac\de\vr}^{\frac{2\de}\vr}\frac{r^{m-1}}{(1+r^2)^\frac{m(m-1)}{m+1}}dr \right)^{\frac{m+1}{2m}}
   \lesssim \ \vr^{\frac{m-1}2}
\end{aligned}$\\

$\displaystyle \begin{aligned}
\|A_2\|_{E_\la^*}
 &\lesssim \left(\int_{B_{2\de}(p_0)}\big(\eta-\eta^{\frac{m+1}{m-1}}\big)^{\frac{2m}{m+1}}|\bar\psi_\vr|^{\frac{2m}{m-1}}d\vol_\ig\right)^{\frac{m+1}{2m}} \\
 &\lesssim \left(\int_{\de\leq|x|\leq2\de}|\psi_\vr|^{\frac{2m}{m-1}}dx \right)^{\frac{m+1}{2m}}
  \lesssim \left(\int_{\frac\de\vr}^{\frac{2\de}\vr}\frac{r^{m-1}}{(1+r^2)^m}dr \right)^{\frac{m+1}{2m}} \\
 &\lesssim \vr^{\frac{m+1}2}
\end{aligned}$\\

$\displaystyle \begin{aligned}
\|A_3\|_{E_\la^*}
 &\lesssim \left(\int_{B_{2\de}(p_0)}|W|^{\frac{2m}{m+1}}|\bar\psi_\vr|^{\frac{2m}{m+1}}d\vol_\ig \right)^{\frac{m+1}{2m}}
  \lesssim \left(\int_{|x|\leq2\de}|x|^{\frac{6m}{m+1}}|\psi_\vr|^{\frac{2m}{m+1}}dx \right)^{\frac{m+1}{2m}} \\
 &\lesssim \left( \vr^{\frac{8m}{m+1}}\int_0^{\frac{2\de}\vr}\frac{r^{\frac{6m}{m+1}+m-1}}{(1+r^2)^{\frac{m(m-1)}{m+1}}}dr \right)^{\frac{m+1}{2m}}
  \lesssim  \begin{cases}
              \vr^{\frac{m-1}2} &\text{if } 2\leq m\leq8\\
              \vr^4|\ln\vr|^{\frac59} &\text{if }  m=9\\
              \vr^4 &\text{if } m\geq10
            \end{cases}
\end{aligned}$\\

$\displaystyle \begin{aligned}
\|A_4\|_{E_\la^*}
 &\lesssim \left(\int_{B_{2\de}(p_0)}|X|^{\frac{2m}{m+1}}|\bar\psi_\vr|^{\frac{2m}{m+1}}d\vol_\ig \right)^{\frac{m+1}{2m}}
  \lesssim \left(\int_{|x|\leq2\de}|x|^{\frac{2m}{m+1}}|\psi_\vr|^{\frac{2m}{m+1}}dx \right)^{\frac{m+1}{2m}}  \\
 &\lesssim \left(\vr^{\frac{4m}{m+1}}\int_0^{\frac{2\de}\vr}\frac{r^{\frac{2m}{m+1}+m-1}}{(1+r^2)^{\frac{m(m-1)}{m+1}}}dr \right)^{\frac{m+1}{2m}}
  \lesssim  \begin{cases}
              \vr^{\frac{m-1}2} &\text{if } 2\leq m\leq4\\
              \vr^2|\ln\vr|^{\frac35} &\text{if } m=5\\
              \vr^2 &\text{if } m\geq6
            \end{cases}
\end{aligned}$\\

$\displaystyle \begin{aligned}
\|A_5\|_{E_\la^*}
 &\lesssim \left(\int_{|x|\leq2\de}|x|^{\frac{4m}{m+1}}|\nabla\psi_\vr|^{\frac{2m}{m+1}}dx \right)^{\frac{m+1}{2m}}
  \lesssim \left(\vr^{\frac{4m}{m+1}}\int_0^{\frac{2\de}\vr}\frac{r^{\frac{4m}{m+1}+m-1}}{(1+r^2)^{\frac{m^2}{m+1}}}dr \right)^{\frac{m+1}{2m}} \\
 &\leq \left(\vr^{\frac{4m}{m+1}}\int_0^{\frac{2\de}\vr}\frac{r^{\frac{2m}{m+1}+m-1}}{(1+r^2)^{\frac{m(m-1)}{m+1}}}dr \right)^{\frac{m+1}{2m}}
  \lesssim  \begin{cases}
              \vr^{\frac{m-1}2}  &\text{if } 2\leq m\leq4\\
              \vr^2|\ln\vr|^{\frac35} &\text{if } m=5\\
              \vr^2 &\text{if } m\geq6
            \end{cases}
\end{aligned}$\\

\noindent Here we used the inequality $|\nabla\psi(x)|\lesssim \mu(x)^{\frac{m}2}$ and the same estimate as for $\|A_4\|_{E_\la^*}$. Finally there holds:\\

$\displaystyle
\|A_6\|_{E_\la^*}
 \lesssim \left(\int_{|x|\leq2\de}|x|^{\frac{6m}{m+1}}|\psi_\vr|^{\frac{2m}{m+1}}dx \right)^{\frac{m+1}{2m}}
  \lesssim  \begin{cases}
              \vr^{\frac{m-1}2} &\text{if } 2\leq m\leq8\\
              \vr^4|\ln\vr|^{\frac59} &\text{if } m=9\\
              \vr^4 &\text{if } m\geq10
            \end{cases}
$\\

\noindent Here we used $|\nabla\eta(x)|\lesssim |x|$ and the same estimate as for $\|A_3\|_{E_\la^*}$.

From these estimates we finally obtain:
\[
\|\bar R_\vr\|_{E_\la^*}
 \lesssim  \begin{cases}
             \vr^{\frac{m-1}2} &\text{if } 2\leq m\leq 4 \\
             \vr^2|\ln\vr|^{\frac35} &\text{if } m=5 \\
             \vr^2 &\text{if } m\geq6
           \end{cases}
\]
\end{proof}

\begin{Lem}\label{energy-esti}
Let $\bar\va_\vr\in\mbs(V)$ be as above and let $\om_m$ stand for the volume of the standard sphere $S^m$. Then
\begin{\equ}\label{el2}
\int_M|\bar\va_\vr|^2d\vol_\ig
 \gtrsim  \begin{cases}
            \vr|\ln\vr|+O(\vr) &\text{if } m=2\\
            \vr + O(\vr^2) &\text{if } m\geq3
          \end{cases}
\end{\equ}
and
\begin{\equ}\label{ef}
\frac12\int_{M}(\bar D\bar\va_\vr,\bar\va_\vr)d\vol_\ig - \frac1{2^*}\int_M|\bar\va_\vr|^{2^*}d \vol_\ig
 \leq \frac1{2m}\left(\frac{m}2\right)^m\om_m + \begin{cases}
                                                  O(\vr)  &\text{if } m=2, \\
                                                  O(\vr^2|\ln\vr|) &\text{if } m=3,\\
                                                  O(\vr^2) &\text{if } m\geq4,
                                                \end{cases}
\end{\equ}
\end{Lem}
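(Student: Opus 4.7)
The plan is to pull back both integrals to the Euclidean ball via the exponential map and the Bourguignon--Gauduchon trivialization \eqref{spin-iso}, handle the leading-order Euclidean contributions by scaling arguments, and then control the geometric error terms using the expansions $\sqrt{|\det G|}=1+O(|x|^2)$, $W=O(|x|^3)$, $X=O(|x|)$ recorded in \eqref{WX}. With $U=\exp_{p_0}^{-1}(V)\subset\R^m$, we have $\int_M|\bar\va_\vr|^2\,d\vol_\ig=\int_U|\va_\vr|^2\sqrt{|\det G|}\,dx$, $\int_M|\bar\va_\vr|^{2^*}\,d\vol_\ig=\int_U|\va_\vr|^{2^*}\sqrt{|\det G|}\,dx$, and by \eqref{cut-off spinor identity}
\[
  \int_M(\bar D\bar\va_\vr,\bar\va_\vr)\,d\vol_\ig
  =\int_U(D\va_\vr,\va_\vr)\sqrt{|\det G|}\,dx
    +\int_U\big(W\!\cdot\!\va_\vr+X\!\cdot\!\va_\vr+R_\vr,\va_\vr\big)\sqrt{|\det G|}\,dx,
\]
where $R_\vr=\sum_{i,j}(b_{ij}-\de_{ij})\pa_i\!\cdot\!\nabla_{\pa_j}\va_\vr$.

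For \eqref{el2}, use $\sqrt{|\det G|}\ge 1-C|x|^2$ and the explicit formula $|\psi_\vr(x)|^2=m^{m-1}\vr^{m-1}(\vr^2+|x|^2)^{-(m-1)}$. Switching to polar coordinates and substituting $r=\vr s$ gives $\int_{|x|\le\de}|\psi_\vr|^2\,dx=m^{m-1}\om_{m-1}\vr\int_0^{\de/\vr}s^{m-1}(1+s^2)^{-(m-1)}\,ds$. For $m\ge 3$ the inner integrand is $\sim s^{1-m}$ at infinity and the integral converges, giving the $\vr$-rate; for $m=2$ the integral diverges like $\log(\de/\vr)$, giving $\vr|\ln\vr|$. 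The same substitution shows $\int|x|^2|\psi_\vr|^2\,dx=O(\vr)$ for $m=2$ and $O(\vr^2)$ for $m\ge3$, which is absorbed by the stated error.

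For \eqref{ef}, the Euclidean main term is handled first. From \eqref{t1}--\eqref{t2} one computes $D\psi_\vr=\frac{m\vr^2}{2(\vr^2+|x|^2)}\psi_\vr=\frac12|\psi_\vr|^{2^*-2}\psi_\vr$, and $D\va_\vr=\nabla\eta\cdot\psi_\vr+\eta D\psi_\vr$. Because Clifford multiplication by a real vector is anti-self-adjoint, $\Real(\nabla\eta\cdot\psi_\vr,\psi_\vr)=0$, and hence
\[
  \tfrac12\Real\!\int_U(D\va_\vr,\va_\vr)\,dx-\tfrac1{2^*}\!\int_U|\va_\vr|^{2^*}\,dx
  =\tfrac14\!\int_U\eta^2|\psi_\vr|^{2^*}dx-\tfrac{1}{2^*}\!\int_U\eta^{2^*}|\psi_\vr|^{2^*}dx.
\]
A standard Beta-function computation yields $\int_{\R^m}|\psi|^{2^*}dx=(m/2)^m\om_m$, and by scale invariance the same holds for $\psi_\vr$; the tail $\int_{|x|\ge\de}|\psi_\vr|^{2^*}dx$ is $O(\vr^m)$. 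Since $\tfrac14-\tfrac1{2^*}=\tfrac{2-m}{4m}\le\tfrac1{2m}$, the Euclidean main term is already $\le\tfrac1{2m}(m/2)^m\om_m+O(\vr^m)$.

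It remains to estimate the four families of geometric errors. The $\sqrt{|\det G|}-1$ corrections contribute at most $C\int|x|^2|\psi_\vr|^{2^*}dx=O(\vr^2|\ln\vr|)$ if $m=2$ and $O(\vr^2)$ if $m\ge 3$ (polar substitution, as above). The $X$-term vanishes after taking real parts since $X$ is a real vector field. The $W$-term, using $|W|=O(|x|^3)$, reduces to $\vr^4\int_0^{2\de/\vr}s^{m+2}(1+s^2)^{1-m}ds$, which is $O(\vr)$, $O(\vr^2)$, $O(\vr^3)$, $O(\vr^4)$ for $m=2,3,4,\ge5$ respectively. The $R_\vr$-term, using $|b_{ij}-\de_{ij}|=O(|x|^2)$ and $|\nabla\psi_\vr|\lesssim\vr^{(m-1)/2}(\vr^2+|x|^2)^{-m/2}$, reduces to $\vr^2\int_0^{2\de/\vr}s^{m+1}(1+s^2)^{-(2m-1)/2}ds$, which is $O(\vr)$, $O(\vr^2|\ln\vr|)$, $O(\vr^2)$ for $m=2,3,\ge4$ respectively. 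Selecting the dominant term in each dimension produces exactly the right-hand side of \eqref{ef}. The main technical obstacle is the dimension-by-dimension accounting of which of these four error sources dominates; all of them are handled by the same polar-coordinate/substitution technique but the resulting exponents differ.
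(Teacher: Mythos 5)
Your approach is the same as the paper's: pull everything back to $\R^m$ via the Bourguignon--Gauduchon trivialization and \eqref{cut-off spinor identity}, isolate the Euclidean main term, and estimate the geometric error families using $\sqrt{|\det G|}=1+O(|x|^2)$, $W=O(|x|^3)$, $X=O(|x|)$, $b_{ij}-\de_{ij}=O(|x|^2)$. Your estimate \eqref{el2} and your dimension-by-dimension bookkeeping of the $W$-, $X$-, $\det G$- and $b_{ij}$-corrections match the paper's $J_1,\dots,J_7$ computation.

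However, there is a genuine error in your computation of the Euclidean main term for \eqref{ef}. You use $D\psi_\vr=\tfrac12|\psi_\vr|^{2^*-2}\psi_\vr$ (apparently reading this off from the displayed form of \eqref{t1}), and from that you obtain the coefficient $\tfrac14-\tfrac1{2^*}=\tfrac{2-m}{4m}$, which is $\le 0$ for $m\ge2$. The correct relation is $D\psi_\vr=|\psi_\vr|^{2^*-2}\psi_\vr$ (without the $\tfrac12$): one checks directly that $D_{\R^m}\psi=m\mu\psi$, not $\tfrac m2\mu\psi$ — indeed $D\psi=\nabla(\mu^{m/2})\cdot(1-x)\cdot\psi_0+\mu^{m/2}D\big((1-x)\cdot\psi_0\big)$, and both pieces combine to $m\mu^{m/2+1}(1-x)\cdot\psi_0$ — and this is also what \eqref{identity-D} uses (it is written precisely so that $\eta D\psi_\vr=|\va_\vr|^{2^*-2}\va_\vr+(\eta-\eta^{2^*-1})|\psi_\vr|^{2^*-2}\psi_\vr$). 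With the correct factor, the Euclidean main term is
\[
\frac12\int_U\eta^2|\psi_\vr|^{2^*}\,dx-\frac1{2^*}\int_U\eta^{2^*}|\psi_\vr|^{2^*}\,dx
 = \Big(\frac12-\frac1{2^*}\Big)\int_{|x|\le\de}|\psi_\vr|^{2^*}\,dx + O(\vr^m)
 = \frac1{2m}\Big(\frac m2\Big)^m\om_m + O(\vr^m),
\]
which is sharp. Your version makes the leading coefficient appear to be $\tfrac{2-m}{4m}(m/2)^m\om_m$, which for $m\ge3$ would be \emph{negative} — a much stronger statement that is false and would render the later $-\la C\vr$ correction in \eqref{energy-esti-key-2} unnecessary, contradicting the role of the sign of $\la$ in the theorem. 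As it happens your weakened claim ``$\le\gamma_{crit}+O(\vr^m)$'' is still true, so the lemma's inequality survives, but this is an accident of the direction of the inequality; the factor must be fixed. A minor secondary point: your $R_\vr=\sum(b_{ij}-\de_{ij})\pa_i\cdot\nabla_{\pa_j}\va_\vr$ contains a $\nabla_{\pa_j}\eta$ part supported on $\{\de\le|x|\le 2\de\}$ (the paper's $J_7$), which you did not estimate explicitly; it is $O(\vr^{m-1})$ and is absorbed by your stated bounds, but should be mentioned. Your bound $O(\vr^4)$ for the $W$-term at $m=5$ misses a logarithm (it is $\vr^4|\ln\vr|$), but this does not affect the final result.
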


\begin{proof}
For the first estimate, by taking into account $d\vol_\ig=d\vol_{\R^m}+O(|x|^2)$ in normal coordinates at $p_0$ we have
\begin{equation*}
\begin{aligned}
\int_M|\bar\va_\vr|^2d\vol_\ig
 &=\int_{B_{2\de}(p_0)}|\bar\va_\vr|^2d\vol_\ig  \\
 &=\int_{|x|\leq\de}|\psi_\vr|^2dx+O\left(\int_{\de<|x|\leq2\de} |\psi_\vr|^2dx \right)+O\left(\int_{|x|\leq2\de}|x|^2|\psi_\vr|^2dx \right) \\
 &=\vr m^{m-1}\om_{m-1}\int_0^{\frac\de\vr}\frac{r^{m-1}}{(1+r^2)^{m-1}}dr
    + O\left( \vr\int_{\frac\de\vr}^{\frac{2\de}\vr}\frac{r^{m-1}}{(1+r^2)^{m-1}}dr \right) \\
 &\hspace{1cm} + O\left( \vr^3\int_0^{\frac{2\de}\vr}\frac{r^{m+1}}{(1+r^2)^{m-1}}dr \right)  \\
 &= A(\vr)+O(\vr^{m-1}) + \begin{cases}
                            O(\vr^{m-1}) &\text{if } m=2,3,\\
                            O(\vr^3|\ln\vr|) &\text{if } m=4,\\
                            O(\vr^3) &\text{if } m\geq5
                          \end{cases} \\
 &= A(\vr) + \begin{cases}
               O(\vr^{m-1}) &\text{if } m=2,3,\\
               O(\vr^3|\ln\vr|) &\text{if } m=4,\\
               O(\vr^3) &\text{if } m\geq5,
             \end{cases}
\end{aligned}
\end{equation*}
where
\[
A(\vr) = \begin{cases}
           \vr\big( \ln(\vr^2+\de^2)-2\ln\vr \big)m^{m-1}\om_{m-1} &\text{if } m=2  \\
           \vr m^{m-1}\om_{m-1}\int_0^\infty\frac{r^{m-1}}{(1+r^2)^{m-1}}dr &\text{if } m\geq3
         \end{cases}
\]
This implies \eqref{el2}.

Now we come to \eqref{ef}. Analogously to the arguments in Lemma~\ref{derivative-esti}, we shall use \eqref{cut-off spinor identity} and
\eqref{identity-D} in order to get
\[
  \int_M(\bar D\bar\va_\vr,\bar\va_\vr)d\vol_\ig = J_1+J_2+\dots+J_7
\]
where
\begin{eqnarray*}
&&J_1 = \real\int_M \eta\cdot(\ov{\nabla\eta\cdot\psi_\vr},\bar\psi_\vr)d\vol_\ig \\
&&J_2 = \int_M|\bar\va_\vr|^{2^*}d\vol_\ig \\
&&J_3 = \int_M(\eta-\eta^{2^*-1})\cdot\eta\cdot|\bar\psi_\vr|^{2^*}d\vol_\ig \\
&&J_4 = \real\int_M\eta^2\cdot(W\cdot\bar\psi_\vr,\bar\psi_\vr)d\vol_\ig \\
&&J_5 = \real\int_M\eta^2\cdot(X\cdot\bar\psi_\vr,\bar\psi_\vr)d\vol_\ig \\
&&J_6 = \real\sum_{i,j}\int_M\eta^2(b_{ij}-\de_{ij})(\ov{\pa_i\cdot\nabla_{\pa_j}\psi_\vr},\bar\psi_\vr)d\vol_\ig \\
&&J_7 = \real\sum_{i,j}\int_M\eta\cdot(b_{ij}-\de_{ij})\pa_j\eta\cdot(\ov{\pa_i\cdot\psi_\vr},\bar\psi_\vr)d\vol_\ig.
\end{eqnarray*}
We have:

$J_1=0$\\

$\displaystyle \begin{aligned}
J_2 &=\int_{|x|\leq\de}|\psi_\vr|^{2^*}dx+O\left(\int_{\de<|x|\leq2\de}|\psi_\vr|^{2^*}dx \right)
       + O\left(\int_{|x|\leq2\de}|x|^2|\psi_\vr|^{2^*}dx \right) \\
 &= m^m\om_{m-1}\int_0^{\frac\de\vr}\frac{r^{m-1}}{(1+r^2)^m}dr
       + O\left(\int_{\frac\de\vr}^{\frac{2\de}\vr}\frac{r^{m-1}}{(1+r^2)^m}dr \right) \\
 &\hspace{1cm} + O\left(\vr^2\int_0^{\frac{2\de}\vr}\frac{r^{m+1}}{(1+r^2)^m}dr \right) \\
 &= m^m\om_{m-1}\int_0^{\infty}\frac{r^{m-1}}{(1+r^2)^m}dr + O(\vr^m)
    + \begin{cases}
        O(\vr^2|\ln\vr|), &\text{if } m=2 \\
        O(\vr^2), &\text{if } m\geq3
      \end{cases} \\
 &= m^m\om_{m-1}\int_0^{\infty}\frac{r^{m-1}}{(1+r^2)^m}dr
    + \begin{cases}
        O(\vr^2|\ln\vr|) &\text{if } m=2\\
        O(\vr^2) &\text{if } m\geq3
      \end{cases}
\end{aligned}$\\

$J_3\lesssim \int_{\de\leq|x|\leq2\de}|\psi_\vr|^{2^*}dx \lesssim \vr^m$\\

$\displaystyle 
J_4 \lesssim \int_{|x|\leq2\de}|x|^3|\psi_\vr|^2dx \lesssim \vr^4\int_0^{\frac{2\de}\vr}\frac{r^{m+2}}{(1+r^2)^{m-1}}dr
 \lesssim \begin{cases}
             \vr^{m-1} &\text{if } 2\leq m\leq 4  \\
             \vr^4|\ln\vr| &\text{if } m=5\\
             \vr^4 &\text{if } m\geq6
           \end{cases}
$\\

$J_5=0$\\

$\displaystyle \begin{aligned}
J_6 &\lesssim \int_{|x|\leq2\de}|x|^2|\nabla\psi_\vr|\cdot|\psi_\vr|dx
 \lesssim \vr^2\int_0^{\frac{2\de}\vr}\frac{r^{m+1}}{(1+r^2)^{m-\frac12}}dr
 &\lesssim \begin{cases}
             \vr &\text{if } m=2\\
             \vr^2|\ln\vr| &\text{if } m=3\\
             \vr^2 &\text{if } m\geq4
           \end{cases}
\end{aligned}$\\

\noindent Here we used the inequality $|\nabla\psi(x)|\lesssim\mu(x)^{\frac{m}2}$.\\

$\displaystyle
J_7 \lesssim \int_{|x|\leq2\de}|x|^3|\psi_\vr|^2dx
 \lesssim \begin{cases}
            \vr^{m-1} &\text{if } 2\leq m\leq 4  \\
            \vr^4|\ln\vr| &\text{if } m=5\\
            \vr^4 &\text{if } m\geq6
          \end{cases}
$\\

\noindent Here we used the inequality $|\nabla\eta(x)|\lesssim x$ and the same estimate as for $J_4$.

Combining these estimates we deduce that
\[
\begin{aligned}
&\frac12\int_{M}(\bar D\bar\va_\vr,\bar\va_\vr)d\vol_\ig - \frac1{2^*}\int_M|\bar\va_\vr|^{2^*}d \vol_\ig\\
&\hspace{1cm}
 \leq \frac12 m^{m-1}\om_{m-1}\int_0^\infty\frac{r^{m-1}}{(1+r^2)^m}dr
   + \begin{cases}
       O(\vr) &\text{if } m=2 \\
       O(\vr^2|\ln\vr|) &\text{if } m=3\\
       O(\vr^2) &\text{if } m\geq4
     \end{cases}
\end{aligned}
\]
Finally \eqref{ef} follows upon taking into account that
\[
  \om_m = \int_{\R^m}\left(\frac2{1+|x|^2}\right)^mdx = 2^m\om_{m-1}\int_0^\infty\frac{r^{m-1}}{(1+r^2)^m}dr.
\]
\end{proof}

\section{Proof of the main results}\label{PMR}
As a consequence of the results from sections \ref{sec:PS} and \ref{sec:minmax} we need to prove
\begin{\equ}\label{target}
  \inf\cm_\la < \ga_{crit},
\end{\equ}
where $\cm:S^+\to\R$ is defined in Proposition~\ref{reduction}. The strategy is to find suitable modifications of the test spinors that lie on $\cm$, and to control the energy of these modifications.

\subsection{Proof of Theorem \ref{main thm} for $\lm>0$ and of Theorem~\ref{thm:mult}}\label{subsec:proof la>0}
Since $F\ge0$ by $(f_1)$, we have
\[
  \cl_\la(\psi)\leq \ce_\la(\psi) = \frac12\big(\|\psi^+\|_\la^2-\|\psi^-\|_\la^2\big) - \frac1{2^*}\int_M|\psi|^{2^*}d\vol_\ig
\]
for all $\psi\in E$. Let
\[
  T_\la:L^{2^*}=L^{2^*}(M,\mbs(M))\to E_\la^0=\ker(D-\lm), \quad T_\la(\psi) = \arg\min_{\phi\in E_\la^0} |\psi-\phi|_{2^*}^{2^*},
\]
i.e.\ $T_\la(\psi)\in E_\la^0$ is the best approximation of $\psi\in L^{2^*}$ in $E_\la^0$. This exists and is unique because the $L^{2^*}$ norm is uniformly convex and $E_\la^0$ is finite-dimensional. Of course, $T$ is not linear in general, and $T\equiv0$ when $E_\la^0=\{0\}$. Clearly we have $|\psi|_{2^*}^{2^*}\ge|\psi-T_\la(\psi)|_{2^*}^{2^*}$ and therefore
\begin{equation}\label{def:tilde-e}
  \ce_\la(\psi) \le \tce_\la(\psi) := \ce_\la(\psi-T_\la(\psi))
    = \frac12\big(\|\psi^+\|_\la^2-\|\psi^-\|_\la^2\big) - \frac1{2^*}\int_M|\psi-T_\la(\psi)|^{2^*}d\vol_\ig.
\end{equation}
This implies
\begin{\equ}\label{energy-estimate-M}
  \inf\cm_\la = \inf_{\phi\in S^+}\max_{\psi\in\ehat_\la(\phi)}\cl_\la(\psi)
            \leq \inf_{\phi\in S_\la^+}\max_{\psi\in\ehat_\la(\phi)}\tce_\la(\psi).
\end{\equ}

We need to collect some properties of $T_\la$ and of the functionals
\[
  \cf_\la:L^{2^*}\to\R,\quad \cf_\la(\psi)=\frac1{2^*}|\psi-T_\la(\psi)|_{2^*}^{2^*}
\]
and
\[
  \tce_\la:E\to\R,\quad \tce_\la(\psi) = \ce_\la(\psi-T_\la(\psi)) = \frac12(\|\psi^+\|_\la^2-\|\psi^-\|_\la^2)-\cf_\la(\psi).
\]

\begin{Lem}\label{lem:prop-T}
\begin{itemize}
  \item[\rm a)] For $\psi\in L^{2^*}$, $\phi\in E_\la^0$ and $t\in\R$ there holds: $T_\la(t\psi)=tT_\la(\psi)$, $T_\la(\psi+\phi)=T_\la(\psi)+\phi$ and $\cf_\la(\psi+\phi)=\cf_\la(\psi)$.
  \item[\rm b)] $T_\la$ is of class $\cc^1$ on $L^{2^*}\setminus E_\la^0$. Moreover, $T'_\la(\psi)[\psi]=T_\la(\psi)$ for all $\psi\in L^{2^*}\setminus E_\la^0$.
  \item[\rm c)] $\cf_\la$ is of class $\cc^2$ on $L^{2^*}\setminus E_\la^0$ with derivative
  \[
    \cf_\la'(\psi)[\phi] = \real\int_M|\psi-T_\la(\psi)|^{2^*-2}(\psi-T_\la(\psi),\phi)d\vol_\ig.
  \]
  \item[\rm d)] $\cf_\la$ is convex.
	\item[\rm e)] If $\psi\in\te=E_\la^+\oplus E_\la^-$ is a critical point of $\tce_\la$ then $\psi-T_\la(\psi)\in E$ is a critical point of $\ce_\la$.
\end{itemize}
\end{Lem}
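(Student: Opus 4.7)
My plan is to take the five parts in the order (a), (d), (b), (c), (e). Part (a) is a direct substitution in the defining minimization problem, (d) is a soft convexity argument, (b) and (c) require the implicit function theorem together with the variational characterization of $T_\la$, and (e) is the payoff that combines everything.

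For (a), the equalities $T_\la(t\psi)=t\,T_\la(\psi)$ and $T_\la(\psi+\phi)=T_\la(\psi)+\phi$ follow from the substitutions $\eta \leftrightarrow t\eta'$ and $\eta \leftrightarrow \eta'+\phi$ in $\arg\min_{\eta\in E_\la^0}|t\psi-\eta|_{2^*}^{2^*}$ and $\arg\min_{\eta\in E_\la^0}|\psi+\phi-\eta|_{2^*}^{2^*}$ respectively. The invariance of $\cf_\la$ follows by plugging the second identity into the definition. For (d), I would write
\[
  \cf_\la(\psi)=\inf_{\phi\in E_\la^0}\frac{1}{2^*}|\psi-\phi|_{2^*}^{2^*}
\]
as the partial minimization over $\phi$ of the jointly convex function $(\psi,\phi)\mapsto \frac{1}{2^*}|\psi-\phi|_{2^*}^{2^*}$ (it is a composition of the linear map $(\psi,\phi)\mapsto\psi-\phi$ with a convex function); minimizing one argument of a jointly convex function produces a convex function of the remaining argument.

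For (b), I characterize $T_\la(\psi)$ for $\psi\notin E_\la^0$ as the unique zero in $E_\la^0$ of the Euler--Lagrange map
\[
  \Phi(\psi,\phi)[\eta]:=\real\int_M|\psi-\phi|^{2^*-2}\bigl(\psi-\phi,\eta\bigr)\,d\vol_\ig,\qquad \eta\in E_\la^0,
\]
viewed as a map $(L^{2^*}\setminus E_\la^0)\times E_\la^0\to (E_\la^0)^*$. The derivative $\partial_\phi\Phi(\psi,T_\la(\psi))$ is the Hessian at the minimizer of the strictly convex function $\phi\mapsto|\psi-\phi|_{2^*}^{2^*}$, and is non-degenerate on the finite-dimensional space $E_\la^0$ because of the uniform convexity of $L^{2^*}$; the implicit function theorem gives $T_\la\in \cc^1(L^{2^*}\setminus E_\la^0, E_\la^0)$. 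The identity $T_\la'(\psi)[\psi]=T_\la(\psi)$ is then obtained by differentiating the homogeneity relation from (a) in $t$ at $t=1$. For (c), the chain rule together with (b) yields
\[
  \cf_\la'(\psi)[\phi]=\real\int_M|\psi-T_\la(\psi)|^{2^*-2}\bigl(\psi-T_\la(\psi),\phi-T_\la'(\psi)[\phi]\bigr)\,d\vol_\ig;
\]
since $T_\la'(\psi)[\phi]\in E_\la^0$, the variational characterization of $T_\la(\psi)$ forces the contribution involving $T_\la'(\psi)[\phi]$ to vanish, giving the stated formula. A second application of the chain rule delivers $\cc^2$ regularity. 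I expect this regularity step to be the main technical obstacle, since smoothness of the nearest-point projection with respect to the non-Hilbertian $L^{2^*}$ norm is delicate.

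For (e), let $\psi\in\te$ be a critical point of $\tce_\la$ and set $\xi:=\psi-T_\la(\psi)\in E$. For arbitrary $\phi\in E$ decomposed as $\phi=\phi^++\phi^0+\phi^-$, the derivative splits as
\[
  \ce_\la'(\xi)[\phi]=\real\int_M\bigl((D-\la)\xi,\phi\bigr)\,d\vol_\ig-\real\int_M|\xi|^{2^*-2}(\xi,\phi)\,d\vol_\ig.
\]
On $E_\la^0$ the first integral vanishes since $D$ is self-adjoint and $(D-\la)\phi^0=0$, and the second vanishes by the variational characterization of $T_\la(\psi)$, which is precisely $\int_M|\xi|^{2^*-2}(\xi,\eta)\,d\vol_\ig=0$ for every $\eta\in E_\la^0$. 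On $\te$, using $(D-\la)T_\la(\psi)=0$ so that $(D-\la)\xi=(D-\la)\psi$, and using the formula for $\cf_\la'$ from (c), the expression above equals $\tce_\la'(\psi)[\phi^++\phi^-]$, which vanishes by assumption. Hence $\ce_\la'(\xi)=0$, proving (e).
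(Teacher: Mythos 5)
Your parts (a), (c), (d), and (e) are essentially correct and track the paper's own argument closely: (a) is the same substitution observation, (d) is the same convexity argument phrased abstractly via partial minimization of a jointly convex function, (c) uses the chain rule plus the vanishing of $\cf_\la'$ on $E_\la^0$ just as the paper does, and (e) splits test directions into $E_\la^0$ versus $\te_\la$ as in the paper.

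Part (b), however, contains a genuine gap. You assert that the Hessian $\partial_\phi\Phi(\psi,T_\la(\psi))$ of the map $\phi\mapsto\frac1{2^*}|\psi-\phi|_{2^*}^{2^*}$ on $E_\la^0$ is \emph{non-degenerate} ``because of the uniform convexity of $L^{2^*}$.'' This inference is false: strict (even uniform) convexity gives uniqueness of the minimizer, but it does not force the Hessian at the minimizer to be positive definite. The function $t\mapsto t^4$ is strictly convex on the one-dimensional space $\R$ with vanishing second derivative at its minimum, and for $2^*>2$ the modulus of convexity of $L^{2^*}$ is only $O(\eps^{2^*})$, which is exactly consistent with such degeneracy. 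The paper's proof instead computes the explicit lower bound
\[
  \cf_{\la,\psi}''(T_\la(\psi))[\chi,\chi]\geq\int_M|\psi-T_\la(\psi)|^{2^*-2}\,|\chi|^2\,d\vol_\ig,
\]
and then invokes B\"ar's theorem \cite{Bar1997} that the nodal set of any nonzero eigenspinor $\chi\in E_\la^0\setminus\{0\}$ has measure zero. Combined with $\psi\notin E_\la^0$ (so $\psi-T_\la(\psi)\neq0$), this forces the integral to be strictly positive, hence the Hessian is positive definite and the implicit function theorem applies. Without B\"ar's unique-continuation-type input the non-degeneracy, and therefore the $\cc^1$ regularity of $T_\la$, does not follow; you should make this step explicit.
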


\begin{proof}
a) is trivial. For the proof of b) we fix $\psi\in L^{2^*}\setminus E_\la^0$ and consider the map $\cf_{\la,\psi}:E_\la^0\to\R$ defined by $\cf_{\la,\psi}(\phi):=\cf_\la(\psi+\phi)=\frac1{2^*}|\psi-\phi|_{2^*}^{2^*}$. Observe that $\phi=T_\la(\psi)$ is the unique solution of $\cf'_{\la,\psi}(\phi)=0$ because $\cf_{\la\psi}$ is of class $\cc^2$ and strictly convex. A simple computation yields for $\phi\in E_\la^0$:
\begin{\equ}\label{msf-sec-derivative}
  \cf_{\la,\psi}''(T_\la(\psi))[\phi,\phi]\geq\int_M|\psi-T_\la(\psi)|^{2^*-2}\cdot|\phi|^2d\vol_\ig.
\end{\equ}
Since the nodal set of any $\phi\in E_\la^0\setminus\{0\}$ is of measure zero by \cite{Bar1997}, $\cf_{\la,\psi}''(T_\la(\psi))$ is positive definite. Now the differentiability of $T_\la$ follows from the implicit function theorem, and $T_\la'(\psi)[\psi]=T_\la(\psi)$ follows from a).

In order to prove c) observe that $\cf_\la'(\psi)$ is trivial on $E_\la^0$ by a). This implies for $\phi\in L^{2^*}$:
\[
\aligned
  \cf_\la'(\psi)[\phi] &= \real\int_M|\psi-T_\la(\psi)|^{2^*-2}\big(\psi-T_\la(\psi),\phi-T_\la'(\psi)[\va]\big)d\vol_\ig\\
   &= \real\int_M|\psi-T_\la(\psi)|^{2^*-2}(\psi-T_\la(\psi),\phi)d\vol_\ig.
\endaligned
\]
Now $\cf_\la$ is $\cc^2$ because $T_\la$ is $\cc^1$.

d) For $\psi_0,\psi_1\in L^{2^*}$ and $\al\in[0,1]$ we obtain using the definition of $T_\la$:
\[
\aligned
  &\big|(1-\al)\psi_0+\al\psi_1-T_\la\big((1-\al)\psi_0+\al\psi_1\big)\big|_{2^*}^{2^*}\\
  &\hspace{1cm} \le \big|(1-\al)\psi_0+\al\psi_1-\big((1-\al)T_\la(\psi_0)+\al T_\la(\psi_1)\big)\big|_{2^*}^{2^*}\\
  &\hspace{1cm} \le (1-\al)\big|\psi_0-T_\la(\psi_0)\big|_{2^*}^{2^*}+\al\big|\psi_1-T_\la(\psi_1)\big|_{2^*}^{2^*}
\endaligned
\]

e) Observe that $\ce_\la'(\psi-T_\la(\psi))$ vanishes on $E_\la^0$ by the definition of $T_\la$. If $\psi$ is a critical point of $\tce_\la$ then $\ce_\la'(\psi-T_\la(\psi))$ vanishes also on $\te$.
\end{proof}

The second derivative of $\cf_\la$ is given by
\[
\aligned
  &\cf_\la''(\psi)[\phi,\chi]=\real\int_M|\psi-T_\la(\psi)|^{2^*-2}(\chi-T_\la'(\psi)[z],\phi)d\vol_\ig     \\
  &\qquad +\frac2{m-1}\int_M|\psi-T_\la(\psi)|^{2^*-4}\real(\psi-T_\la(\psi), \chi-T_\la'(\psi)[z])\cdot\real(\psi-T_\la(\psi),\phi)d\vol_\ig.
\endaligned
\]
Using $T_\la'(\psi)[\psi]=T_\la(\psi)$ and that $\cf_\la''(\psi)[\phi,\chi]=0$ for $\psi\in L^{2^*}\setminus E_\la^0$, $\phi\in L^{2^*}$ and $\chi\in E_\la^0$, an elementary calculation yields for $\psi\in L^{2^*}\setminus E_\la^0$ and $\phi\in L^{2^*}$:
\begin{\equ}\label{Tmfm-1}
\aligned
  &\big( \cf_\la''(\psi)[\psi,\psi]-\cf_\la'(\psi)[\psi]\big)+2\big(\cf_\la''(\psi)[\psi,\phi]-\cf_\la'(\psi)[\phi]\big) +\cf_\la''(\psi)[\phi,\phi] \\
  &\qquad \geq\frac2{m+1}\int_M|\psi-T_\la(\psi)|^{2^*}d\vol_\ig >0
\endaligned
\end{\equ}
Lemma~\ref{lem:prop-T} implies that $\tce_\la(\psi+\phi)=\widetilde\ce_\la(\psi)$ for $\psi\in E$ and $\phi\in E_\la^0$. Therefore we only need to consider $\tce_\la$ on $\te_\la=E_\la^+\op E_\la^-$, so from now on $\tce_\la:\te_\la\to\R$. A straightforward calculation shows for any $\psi,\phi\in \te_\la$:
\[
  \widetilde\ce_\la'(\psi)[\phi] = \real\int_M\big(D\psi-\lm\psi-|\psi-T_\la(\psi)|^{2^*-2}(\psi-T_\la(\psi)),\phi\big)d\vol_\ig.
\]
Next we construct the Nehari-Pankov manifold for $\tce_\la$. We could refer to \cite{Szulkin-Weth:2010} as in section~\ref{sec:minmax} but we prefer a different 2-step procedure which will make the subsequent estimates more transparent.

\begin{Prop}\label{reduction-prop}
\begin{itemize}
  \item[\rm a)] There exists a $\cc^1$ map $\eta_\la: E_\la^+\to E_\la^-$ such that for $\psi\in\te_\la$:
    \[
      \widetilde\ce_\la'(\psi)[\chi]=0 \quad\text{for all $\chi\in E_\la^-$} \qquad\Longrightarrow\qquad \psi^-=\eta_\la(\psi^+)
    \]
    Moreover, $\eta_\la(\phi)$ maximizes $\tce_\la(\phi+\chi)$ over all $\chi\in E_\la^-$.
   \item[\rm b)] The functional $\cj_\la:E_\la^+\to\R$, $\cj_\la(\phi)=\tce_\la\big(\phi+\eta_\la(\phi) \big)$, satisfies:
    \[
       \cj_\la'(\phi)=0 \qquad\Longrightarrow\qquad \tce_\la'\big(\phi+\eta_\la(\phi) \big)=0
    \]
   \item[\rm c)] For every $\phi\in E_\la^+\setminus\{0\}$, the map $\cj_{\la,\phi}:\R\to\R$, $\cj_{\la,\phi}(t):=\cj_\la(t\phi)$, is of class $\cc^2$ and satisfies
    \[
      \cj_{\la,\phi}'(t)=0,\ t>0 \qquad\Longrightarrow\qquad \cj_{\la,\phi}''(t)<0.
    \]
   Moreover $\cj_{\la,\phi}(0)=\cj_{\la,\phi}'(0)=0$, $\cj_{\la,\phi}''(0)>0$.
\end{itemize}
\end{Prop}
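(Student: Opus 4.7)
For (a), fix $\phi\in E_\la^+$ and consider the map $\chi\mapsto\tce_\la(\phi+\chi)=\tfrac12\|\phi\|_\la^2-\tfrac12\|\chi\|_\la^2-\cf_\la(\phi+\chi)$ on $E_\la^-$. This map is strictly concave (the quadratic $-\tfrac12\|\cdot\|_\la^2$ is strictly concave and $\cf_\la$ is convex by Lemma~\ref{lem:prop-T}~d)), coercive ($\cf_\la\ge0$ makes $\tce_\la(\phi+\chi)\to-\infty$ as $\|\chi\|_\la\to\infty$), and weakly upper semicontinuous ($\cf_\la$ is convex and continuous on $L^{2^*}$, hence weakly lsc), so it attains a unique global maximum $\eta_\la(\phi)\in E_\la^-$, characterized by the vanishing of $\tce_\la'(\phi+\eta_\la(\phi))$ on $E_\la^-$. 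For $\phi\ne0$ we have $\phi+\eta_\la(\phi)\in\te_\la\setminus\{0\}\subset L^{2^*}\setminus E_\la^0$, so by Lemma~\ref{lem:prop-T}~c) the functional $\cf_\la$ is $\cc^2$ at that point and the $E_\la^-$-Hessian of $\tce_\la$ is the negative definite form $-\inp{\cdot}{\cdot}_\la-\cf_\la''(\phi+\eta_\la(\phi))[\cdot,\cdot]$; the implicit function theorem then gives $\eta_\la\in\cc^1(E_\la^+\setminus\{0\},E_\la^-)$, and we set $\eta_\la(0)=0$.

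Part (b) is a standard chain-rule calculation: for $\chi\in E_\la^+$,
\[
  \cj_\la'(\phi)[\chi]=\tce_\la'(\phi+\eta_\la(\phi))\big[\chi+\eta_\la'(\phi)[\chi]\big]=\tce_\la'(\phi+\eta_\la(\phi))[\chi],
\]
since $\eta_\la'(\phi)[\chi]\in E_\la^-$ and $\tce_\la'(\phi+\eta_\la(\phi))$ vanishes on $E_\la^-$ by construction; thus $\cj_\la'(\phi)=0$ forces $\tce_\la'(\phi+\eta_\la(\phi))$ to vanish on all of $\te_\la$. For the base-case identities in (c), $\cj_{\la,\phi}(0)=\cj_{\la,\phi}'(0)=0$ follow from $\eta_\la(0)=0$ together with $\cf_\la'(0)=0$, the latter because the $2^*$-homogeneity $\cf_\la(s\psi)=s^{2^*}\cf_\la(\psi)$ and $2^*>2$ kill $\cf_\la'$ and $\cf_\la''$ at the origin; the homogeneity bound $\|\eta_\la(t\phi)\|_\la=O(t^{2^*-1})=o(t)$, read off the Euler equation of $\eta_\la$, then yields the expansion $\cj_{\la,\phi}(t)=\tfrac{t^2}2\|\phi\|_\la^2+o(t^2)$, whence $\cj_{\la,\phi}''(0)=\|\phi\|_\la^2>0$.

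The serious step is the strict inequality $\cj_{\la,\phi}''(t)<0$ at an interior critical point $t>0$. Writing $\psi_t=t\phi+\eta_\la(t\phi)$ and $\chi'(t)=\tfrac{d}{dt}\eta_\la(t\phi)\in E_\la^-$, envelope differentiation yields $\cj_{\la,\phi}'(t)=\tce_\la'(\psi_t)[\phi]$, so the critical-point condition is $\cf_\la'(\psi_t)[\phi]=t\|\phi\|_\la^2$; differentiating the Euler equation for $\eta_\la$ in $t$, testing against $\chi'(t)$, and rearranging gives
\[
  \cj_{\la,\phi}''(t)=\|\phi\|_\la^2-\|\chi'(t)\|_\la^2-\cf_\la''(\psi_t)[\phi+\chi'(t),\phi+\chi'(t)].
\]
My plan for the sign is to apply the strict convexity inequality~\eqref{Tmfm-1} at $\psi=\psi_t$ with the tailored choice $\phi_0=-\psi_t+t(\phi+\chi'(t))$, so that $\psi+\phi_0=t(\phi+\chi'(t))$. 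Combining with the identities $\cf_\la'(\psi_t)[\phi]=t\|\phi\|_\la^2$, $\cf_\la'(\psi_t)[\eta_\la(t\phi)]=-\|\eta_\la(t\phi)\|_\la^2$, and $\cf_\la'(\psi_t)[\chi'(t)]=-\inp{\eta_\la(t\phi)}{\chi'(t)}_\la$ (the last two from the Euler equation of $\eta_\la$) and completing the square reduces \eqref{Tmfm-1} to
\[
  t^2\cf_\la''(\psi_t)[\phi+\chi'(t),\phi+\chi'(t)]>t^2\big(\|\phi\|_\la^2-\|\chi'(t)\|_\la^2\big)+\|\eta_\la(t\phi)-t\chi'(t)\|_\la^2.
\]
Dropping the non-negative square and dividing by $t^2$ delivers exactly $\cj_{\la,\phi}''(t)<0$. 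The main obstacle is precisely this choice of $\phi_0$: the natural candidates $\phi_0=s\phi+u$ with $s\in\R$, $u\in E_\la^-$ do not align with the implicit-function identities for $\chi'(t)$, and the choice above is what makes the cross-terms in \eqref{Tmfm-1} reduce cleanly to the quantity $\cj_{\la,\phi}''(t)$.
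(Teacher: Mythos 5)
Your proof is correct and follows essentially the same route as the paper: strict concavity of $\chi\mapsto\tce_\la(\phi+\chi)$ from the convexity of $\cf_\la$, plus anti-coercivity and the implicit function theorem for part (a); the chain rule together with $\tce_\la'(\psi)|_{E_\la^-}=0$ for part (b); and the strict-convexity inequality \eqref{Tmfm-1} for the sign in part (c). For the last step, your parametrized formula $\cj_{\la,\phi}''(t)=\|\phi\|_\la^2-\|\chi'(t)\|_\la^2-\cf_\la''(\psi_t)[\phi+\chi'(t),\phi+\chi'(t)]$ and the choice $\phi_0=-\psi_t+t(\phi+\chi'(t))=\eta_\la'(t\phi)[t\phi]-\eta_\la(t\phi)$ coincide, after the substitution $\phi\leftrightarrow t\phi$, with the paper's identity $\cj_\la''(\phi)[\phi,\phi]=\tce_\la''(\psi)[\psi+\chi,\psi+\chi]$ where $\chi=\eta_\la'(\phi)[\phi]-\eta_\la(\phi)$; so the two computations are the same argument up to rescaling. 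One small inaccuracy in your closing remark: your $\phi_0$ lies entirely in $E_\la^-$, so it is in fact of the advertised form $s\phi+u$ with $s=0$ — the point is not that such candidates "do not align," but that the specific $u=\eta_\la'(t\phi)[t\phi]-\eta_\la(t\phi)$ is the one that makes the cross terms cancel via the differentiated Euler equation.
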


\begin{proof}
For $\phi\in E_\la^+$ the map $\tce_{\la,\phi}:E_\la^-\to\R$ defined by
\[
   \tce_{\la,\phi}(\chi) = \tce_\la(\phi+\chi) = \frac12\big( \|\phi\|_\la^2-\|\chi\|_\la^2 \big) - \cf_\la(\phi+\chi).
\]
is strict concave because $\cf_\la$ is convex. Moreover, it is anti-coercive, hence it has a unique critical point $\eta_\la(\phi)$, which is a maximum point. That $\eta_\la:E_\la^+\to E_\la^-$ is of class $\cc^1$ follows from the implicit function theorem applied to the equation $D_\chi\tce_\la(\phi+\chi)=0$ which defines $\chi=\eta_\la(\phi)$. This proves a).

For the proof of b) recall that $\tce_\la'(\phi+\eta_\la(\phi))[\chi]=0$ for all $\chi\in E_\la^-$ by construction of $\eta_\la$. If $\cj_\la'(\phi)=0$ then $\tce_\la'(\phi+\eta_\la(\phi))[\chi]=0$ holds also for all $\chi\in E_\la^+$ as a simple calculation shows.

In order to see c) we compute $\cj_{\la,\phi}'(t) = \tce_\la'(t\phi+\eta_\la(t\phi))[\phi]$ which implies that $\cj_{\la,\phi}$ is $\cc^2$. The implication in c) is equivalent to:
\[
  \cj_\la'(\phi)[\phi]=0,\ \phi\ne0 \qquad\Longrightarrow\qquad \cj_\la''(\phi)[\phi,\phi]<0.
\]
This is a consequence of the following computation where we set $\psi=\phi+\eta_\la(\phi)$ and $\chi=\eta_\la'(\phi)[\phi]-\eta_\la(\phi)$, and use that $\tce_\la'(\psi)|_{E_\la^-}=0$.
\begin{\equ}\label{eee1}
\aligned
 &\cj_\la''(\phi)[\phi,\phi]
   = \tce_\la''(\psi)[\phi+\eta_\la'(\phi)[\phi],\phi] = \tce_\la''(\psi)[\psi+\chi,\psi+\chi]\\
 &\hspace{1cm} = \widetilde\ce_\la''(\psi)[\psi,\psi]+2\widetilde\ce_\la''(\psi)[\psi,\chi]+\widetilde\ce_\la''(\psi)[\chi,\chi] \\
 &\hspace{1cm} =\cj_\la'(\phi)[\phi]+\big(\cf_\la(\psi)[\psi]-\cf_\la''(\psi)[\psi,\psi]\big)+2\big(\cf_\la'(\psi)[\chi]-\cf_\la''(\psi)[\psi,\chi] \big) \\
 &\hspace{2cm} -\cf_\la''(\psi)[\chi,\chi]-\|\chi\|_\la^2\\
 &\hspace{1cm} \leq \cj_\la'(\phi)[\phi]-\|\chi\|_\la^2-\frac2{m+1}|\psi-T_\la(\psi)|_{2^*}^{2^*}.
\endaligned
\end{\equ}
Finally we have $\cj_{\la,\phi}(0)=\tce_\la(0)=0$, $\cj_{\la,\phi}'(0)=\tce_\la'(0)[\phi]=0$, and $\cj_{\la,\phi}''(0)=\tce_\la''(0)[\phi,\phi]>0$.
\end{proof}

The Nehari-Pankov manifold for $\tce_\la$ is defined as
\[
  \widetilde\cp_\la := \{\phi\in E_\la^+\setminus\{0\}:\, \cj_\la'(\phi)[\phi]=0\}.
\]
By Proposition \ref{reduction-prop} this is a smooth submanifold of codimension $1$ in $E_\la^+$, and it is a natural constraint for the problem of finding non-trivial critical points of $\cj_\la$.

\begin{Lem}\label{PS-esti1}
If $\psi_n\in\te_\la$ satisfies
\begin{\equ}\label{aaa1}
  \big\|\widetilde\ce_\la'(\psi_n)\big|_{E_\la^-}\big\|_\la = \sup_{v\in E_\la^-,\, \|v\|=1}\widetilde\ce_\la'(\psi_n)[v] = o_n(1)
\end{\equ}
then
\[
  \|\psi_n^--\eta_\la(\psi_n^+)\|_\la = O\big(\big\|\widetilde\ce_\la'(\psi_n)\big|_{E_\la^-}\big\|_\la\big).
\]
Moreover, if $(\psi_n)_n$ is a $(PS)_c$-sequence for $\tce_\la$ on $\te_\la$, then $(\psi_n^+)_n$ is a $(PS)_c$-sequence for $\cj_\la$ on $E_\la^+$.
\end{Lem}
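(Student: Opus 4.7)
The plan is to deduce both assertions from the uniform strict concavity of $\widetilde\ce_\la$ in the $E_\la^-$-direction, which is built in through the convexity of $\cf_\la$ (Lemma~\ref{lem:prop-T}~d)). More precisely, for each $\phi\in E_\la^+$ the map $g_\phi:E_\la^-\to\R$, $g_\phi(\chi):=\widetilde\ce_\la(\phi+\chi)$, satisfies
\[
  g_\phi''(\chi)[v,v]=-\|v\|_\la^2-\cf_\la''(\phi+\chi)[v,v]\leq -\|v\|_\la^2
  \quad\text{for all }v\in E_\la^-,
\]
because $\cf_\la$ is $\cc^2$ on $L^{2^*}\setminus E_\la^0$ and convex; by definition, $g_\phi'(\eta_\la(\phi))=0$.

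For the first claim, set $\phi_n:=\psi_n^+$, $\chi_n:=\eta_\la(\phi_n)$, and note that $g_{\phi_n}'(\psi_n^-)$ acts on $v\in E_\la^-$ as $\widetilde\ce_\la'(\psi_n)[v]$, so its operator norm equals $\|\widetilde\ce_\la'(\psi_n)|_{E_\la^-}\|_\la=o_n(1)$. Applying the fundamental theorem of calculus along the segment $[\psi_n^-,\chi_n]\subset E_\la^-$ and using the above concavity estimate,
\[
  -g_{\phi_n}'(\psi_n^-)[\chi_n-\psi_n^-]
   =\big(g_{\phi_n}'(\chi_n)-g_{\phi_n}'(\psi_n^-)\big)[\chi_n-\psi_n^-]
   \leq -\|\chi_n-\psi_n^-\|_\la^2.
\]
Hence $\|\chi_n-\psi_n^-\|_\la^2\le \|\widetilde\ce_\la'(\psi_n)|_{E_\la^-}\|_\la\cdot\|\chi_n-\psi_n^-\|_\la$, giving the first assertion. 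Note that no a priori boundedness of $(\psi_n)_n$ is used here; the uniform coercivity constant $1$ comes entirely from the $-\|\psi^-\|_\la^2/2$ summand.

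For the second claim, a direct differentiation, together with $\widetilde\ce_\la'(\phi+\eta_\la(\phi))|_{E_\la^-}=0$, gives $\cj_\la'(\phi)[v]=\widetilde\ce_\la'(\phi+\eta_\la(\phi))[v]$ for $v\in E_\la^+$. Since the quadratic part of $\widetilde\ce_\la$ depends on $\psi^+$ and $\psi^-$ separately, the difference
\[
  \cj_\la'(\psi_n^+)[v]-\widetilde\ce_\la'(\psi_n)[v]
   = \cf_\la'(\psi_n)[v]-\cf_\la'(\psi_n^++\chi_n)[v] \quad (v\in E_\la^+)
\]
reduces to a difference of $\cf_\la'$-values. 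By the first part, $\chi_n-\psi_n^-\to0$ in $E$, hence in $L^{2^*}$ by the Sobolev embedding $E\hookrightarrow L^{2^*}$; then the Hölder estimate $|\cf_\la'(\psi)[v]|\le |\psi-T_\la\psi|_{2^*}^{2^*-1}|v|_{2^*}$ together with local uniform continuity of $\cf_\la'$ on $L^{2^*}$ forces this difference to be $o_n(1)\|v\|_\la$. An analogous mean-value argument on the segment $[\psi_n,\psi_n^++\chi_n]$ yields $\cj_\la(\psi_n^+)-\widetilde\ce_\la(\psi_n)=o_n(1)$, establishing $\cj_\la(\psi_n^+)\to c$. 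The main technical obstacle is the last step: controlling the nonlinear $\cf_\la'$ along this path requires an $L^{2^*}$-bound on $\psi_n$, which is why the lemma is typically invoked together with a companion boundedness result (analogous to Lemma~\ref{PS-bdd} for $\cl_\la$); the concavity structure of $\widetilde\ce_\la$ on $E_\la^-$ again provides this bound since $\widetilde\ce_\la\ge\cl_\la$ on all of $\te_\la$.
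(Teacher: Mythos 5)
Your proof of the first assertion is correct and essentially equivalent to the paper's: both exploit the convexity of $\cf_\la$, you via the second--derivative bound $g_\phi''(\chi)[v,v]\le-\|v\|_\la^2$ and the paper via the monotonicity inequality $\big(\cf_\la'(\psi_n)-\cf_\la'(\zeta_n)\big)[\xi_n]\ge0$, which is the same information packaged at first order and therefore sidesteps the (minor) $\cc^2$-regularity caveat at $0$.

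For the second assertion you correctly identify the structural facts $\cj_\la'(\phi)[v]=\tce_\la'(\phi+\eta_\la(\phi))[v]$ for $v\in E_\la^+$ and the reduction to comparing $\cf_\la'$ at $\psi_n$ and at $\zeta_n=\psi_n^++\eta_\la(\psi_n^+)$, but the argument has a genuine gap precisely where you flag it: the a~priori boundedness of $(\psi_n)_n$. Your proposed justification --- ``the concavity structure of $\tce_\la$ on $E_\la^-$ provides this bound since $\tce_\la\ge\cl_\la$'' --- does not work: a pointwise inequality between the two functionals says nothing about their derivatives, so it does not transfer the Palais--Smale property (and hence the boundedness in Lemma~\ref{PS-bdd}) from $\cl_\la$ to $\tce_\la$. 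The correct mechanism, used in the paper, is the identity $\tce_\la(\psi)=\ce_\la(\psi-T_\la(\psi))$ combined with the fact that $\ce_\la'(\psi-T_\la(\psi))$ annihilates $E_\la^0$ (the defining property of $T_\la$) and that $T_\la'(\psi)[v]\in E_\la^0$. From the chain rule one then gets $\tce_\la'(\psi)[v]=\ce_\la'(\psi-T_\la(\psi))[v]$, so a $(PS)_c$-sequence for $\tce_\la$ on $\te_\la$ yields a $(PS)_c$-sequence $(\psi_n-T_\la(\psi_n))_n$ for $\ce_\la=\cl_\la|_{f\equiv0}$, which is bounded by Lemma~\ref{PS-bdd}; by $L^2$-orthogonality of the three summands this bounds $(\psi_n)_n$. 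With that boundedness in hand, the paper then simply invokes a uniform bound on $\tce_\la''$ on bounded sets to pass from $(\psi_n)_n$ to $(\zeta_n)_n$, which is cleaner than the $\cf_\la'$-continuity argument you sketch (and absorbs your mean--value step for the values as well).
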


\begin{proof}
For simplicity of notation we set $\zeta_n=\psi_n^++\eta_\la(\psi_n^+)$ and $\xi_n=\psi_n-\zeta_n=\psi_n^--\eta_\la(\psi_n^+)\in E_\la^-$. The we have by definition of $\eta_\la$:
\[
  0=\widetilde\ce'(\zeta_n)[\xi_n]=-\inp{\eta_\la(\psi_n^+)}{\xi_n}-\real\int_M |\zeta_n-T_\la(\zeta_n)|^{2^*-2}(\zeta_n-T_\la(\zeta_n),\xi_n)d\vol_\ig.
\]
Next \eqref{aaa1} implies
\begin{\equ}\label{aaa2}
  o(\|\xi_n\|_\la) = \tce_\la'(\psi_n)[\xi_n]
   = -\inp{\psi_n^-}{\xi_n}_\la-\real\int_M |\psi_n-T(\psi_n)|^{2^*-2}(\psi_n-T_\la(\psi_n),\xi_n)d\vol_\ig.
\end{\equ}
Therefore we have
\begin{\equ}\label{aaa3}
\aligned
  o(\|\xi_n\|_\la)
    &=\|\xi_n\|^2+\real\int_M |\psi_n-T_\la(\psi_n)|^{2^*-2}(\psi_n-T_\la(\psi_n),\xi_n)d\vol_\ig\\
    &\qquad -\real\int_M |\zeta_n-T_\la(\zeta_n)|^{2^*-2}(\zeta_n-T_\la(\zeta_n),\xi_n)d\vol_\ig.
\endaligned
\end{\equ}
Since the functional $\psi\mapsto |\psi-T_\la(\psi)|_{2^*}^{2^*}$ is convex, we obtain
\[
\aligned
  &\real\int_M |\psi_n-T_\la(\psi_n)|^{2^*-2}(\psi_n-T_\la(\psi_n),\xi_n)d\vol_\ig\\
  &\hspace{1cm} -\real\int_M |\zeta_n-T_\la(\zeta_n)|^{2^*-2}(\zeta_n-T_\la(\zeta_n),\xi_n)d\vol_\ig \geq 0.
\endaligned
\]
Now \eqref{aaa2}, \eqref{aaa3} and $\xi_n\in E_\la^-$ yield $\|\xi_n\|_\la = O\big(\big\|\widetilde\ce_\la'(\psi_n)\big|_{E_\la^-}\big\|_\la\big)$.

If $(\psi_n)_n$ is a $(PS)_c$-sequence for $\tce_\la$ then $\big(\psi_n-T_\la(\psi_n)\big)_n$ is a $(PS)_c$-sequence for $\ce_\la$, hence it is bounded by Lemma~\ref{PS-bdd}. Now a bound on the second derivative of $\ce_\la$ implies that $(\zeta_n)_n$ is a $(PS)_c$-sequence for $\tce_\la$, hence $(\psi_n^+)_n$ is a $(PS)_c$-sequence for $\cj_\la$ on $E_\la^+$.
\end{proof}

It is not difficult to check that the functional
\[
  \ch_\la: E_\la^+\to\R,\quad \ch_\la(\phi)=\cj_\la'(\phi)[\phi],
\]
is of class $\cc^1$ with derivative
\[
  \ch_\la'(\phi)[\chi] = \cj_\la'(\phi)[\chi]+\cj_\la''(\phi)[\phi,\chi]
\]
for $\phi,\,\chi\in E_\la^+$. Observe that $\widetilde\cp_\la=\ch_\la^{-1}(0)\setminus\{0\}$.

\begin{Lem}\label{est-G}
For $\phi\in E_\la^+$ and $\psi:=\phi+\eta_\la(\phi)$ there holds
\[
  \ch_\la'(\phi)[\phi] \leq 2\ch_\la(\phi)-\frac2{m+1}\big| \psi-T_\la(\psi) \big|_{2^*}^{2^*}.
\]
\end{Lem}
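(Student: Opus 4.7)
The plan is to reduce the claim to the calculation already carried out in equation \eqref{eee1} in the proof of Proposition~\ref{reduction-prop}~c). First I would rewrite the left-hand side directly from the definitions:
\[
  \ch_\la'(\phi)[\phi] = \cj_\la'(\phi)[\phi] + \cj_\la''(\phi)[\phi,\phi] = \ch_\la(\phi) + \cj_\la''(\phi)[\phi,\phi].
\]
So the inequality to prove becomes
\[
  \cj_\la''(\phi)[\phi,\phi] \leq \ch_\la(\phi) - \frac{2}{m+1}|\psi-T_\la(\psi)|_{2^*}^{2^*},
\]
which is precisely the statement obtained (after discarding a nonnegative $\|\chi\|_\la^2$ term) in \eqref{eee1}.

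The main step is therefore to repeat the computation of $\cj_\la''(\phi)[\phi,\phi]$ carried out there. Setting $\psi := \phi+\eta_\la(\phi)$ and $\chi := \eta_\la'(\phi)[\phi]-\eta_\la(\phi)$, so that $\phi + \eta_\la'(\phi)[\phi] = \psi + \chi$ with $\chi \in E_\la^-$, I would expand
\[
  \cj_\la''(\phi)[\phi,\phi] = \tce_\la''(\psi)[\psi+\chi,\psi+\chi] = \tce_\la''(\psi)[\psi,\psi] + 2\tce_\la''(\psi)[\psi,\chi] + \tce_\la''(\psi)[\chi,\chi].
\]
Using $\tce_\la = \tfrac12(\|\cdot\|_\la^2\text{ on }E_\la^+ - \|\cdot\|_\la^2\text{ on }E_\la^-) - \cf_\la$ together with $\tce_\la'(\psi)|_{E_\la^-}=0$ (which is the defining property of $\eta_\la$ from Proposition~\ref{reduction-prop}~a)) and $\chi \in E_\la^-$, the quadratic terms combine to give a remainder $\cj_\la'(\phi)[\phi] - \|\chi\|_\la^2$ plus the $\cf_\la$-terms that already appear on the left-hand side of \eqref{Tmfm-1}.

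Applying \eqref{Tmfm-1} with $\phi$ there replaced by $\chi$ then bounds those $\cf_\la$-terms from below by $\tfrac{2}{m+1}|\psi-T_\la(\psi)|_{2^*}^{2^*}$, which after rearrangement yields
\[
  \cj_\la''(\phi)[\phi,\phi] \leq \cj_\la'(\phi)[\phi] - \|\chi\|_\la^2 - \frac{2}{m+1}|\psi-T_\la(\psi)|_{2^*}^{2^*}.
\]
Adding $\ch_\la(\phi) = \cj_\la'(\phi)[\phi]$ to both sides and discarding the nonnegative term $\|\chi\|_\la^2$ produces the desired estimate. There is no genuine obstacle beyond bookkeeping: every ingredient, including the key convexity-based inequality \eqref{Tmfm-1} and the differentiability of $\eta_\la$, is already established, so the lemma is essentially a corollary of Proposition~\ref{reduction-prop}~c) and its proof.
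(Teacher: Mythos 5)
Your proof is correct and is essentially the argument the paper intends: the paper's one-line proof ("follows from \eqref{Tmfm-1} and a similar argument as in \eqref{eee1}") is exactly the reduction you spell out — rewrite $\ch_\la'(\phi)[\phi] = \ch_\la(\phi) + \cj_\la''(\phi)[\phi,\phi]$, expand $\cj_\la''(\phi)[\phi,\phi]$ via $\tce_\la''(\psi)$ as in \eqref{eee1} (noting that this computation does not actually require $\phi\in\widetilde\cp_\la$), bound the $\cf_\la$-terms by \eqref{Tmfm-1} with $\phi$ replaced by $\chi = \eta_\la'(\phi)[\phi]-\eta_\la(\phi)\in E_\la^-$, and discard the nonnegative $\|\chi\|_\la^2$ term. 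No gap.
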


\begin{proof}
This estimate follows immediately from \eqref{Tmfm-1} and a similar argument as in \eqref{eee1}.
\end{proof}

\begin{Prop}\label{PS-esti2}
For any $c>0$, if $(\phi_n)_n$ is a $(PS)_c$-sequence for $\cj_\la$ then there exists a sequence $(t_n)_n$ in $\R$ such that $t_n\phi_n\in\widetilde\cp_\la$ and $|t_n-1|=(\|\cj_\la'(\phi_n)\|_\la)$.
\end{Prop}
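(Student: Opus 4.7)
The plan is to apply an implicit function theorem style argument along the ray $\{t\phi_n:t>0\}$ to the constraint $\ch_\la(t\phi_n)=0$, pivoting at $t=1$. Preliminary step: I need boundedness of $(\phi_n)_n$ in $E_\la^+$ together with a lower bound $\|\phi_n\|_\la\ge\delta>0$. Setting $\psi_n:=\phi_n+\eta_\la(\phi_n)$, the construction of $\eta_\la$ gives $\tce_\la'(\psi_n)|_{E_\la^-}=0$ while $\tce_\la'(\psi_n)|_{E_\la^+}=\cj_\la'(\phi_n)\to 0$ and $\tce_\la(\psi_n)=\cj_\la(\phi_n)\to c$. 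Hence $(\psi_n)_n$ is a $(PS)_c$-sequence for $\tce_\la$ on $\te_\la$. Since $\tce_\la=\ce_\la\circ(\mathrm{id}-T_\la)$ and $\ce_\la'(\psi_n-T_\la(\psi_n))$ vanishes on $E_\la^0$ by the definition of $T_\la$, $(\psi_n-T_\la(\psi_n))_n$ is a $(PS)_c$-sequence for $\ce_\la$, so Lemma~\ref{PS-bdd} (with $f\equiv 0$) yields its boundedness, hence that of $\phi_n=\psi_n^+$. The lower bound follows from $c>0$ since $\eta_\la$ is continuous with $\eta_\la(0)=0$. Existence of a unique $t_n>0$ with $t_n\phi_n\in\widetilde\cp_\la$ is then immediate from Proposition~\ref{reduction-prop}(c).

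For the quantitative estimate, I would apply the mean value theorem to write $\cj_{\la,\phi_n}'(t_n)-\cj_{\la,\phi_n}'(1)=\cj_{\la,\phi_n}''(\xi_n)(t_n-1)$ for some $\xi_n$ between $1$ and $t_n$. Since $\cj_{\la,\phi_n}'(1)=\cj_\la'(\phi_n)[\phi_n]=O(\|\cj_\la'(\phi_n)\|_\la)$ and $\cj_{\la,\phi_n}'(t_n)=0$, matters reduce to bounding $|\cj_{\la,\phi_n}''(\xi_n)|$ away from zero. A Pohozaev-type identity $\tce_\la(\psi_n)-\tfrac12\tce_\la'(\psi_n)[\psi_n]=\tfrac1{2m}|\psi_n-T_\la(\psi_n)|_{2^*}^{2^*}$, derived from $\cf_\la'(\psi_n)[T_\la(\psi_n)]=0$ (Lemma~\ref{lem:prop-T}(a),(c)) and the identity $\cf_\la'(\psi_n)[\psi_n]=|\psi_n-T_\la(\psi_n)|_{2^*}^{2^*}$, combined with $\tce_\la(\psi_n)\to c$ and $\tce_\la'(\psi_n)[\psi_n]=o(1)$, gives $|\psi_n-T_\la(\psi_n)|_{2^*}^{2^*}\to 2mc>0$. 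Applying Lemma~\ref{est-G} at $\phi=\phi_n$ then produces $\ch_\la'(\phi_n)[\phi_n]\le-\tfrac{4mc}{m+1}+o(1)$; differentiating the identity $\ch_\la(t\phi_n)=t\cj_{\la,\phi_n}'(t)$ at $t=1$ translates this into $\cj_{\la,\phi_n}''(1)\le-\beta<0$ for all $n$ large.

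The main obstacle is promoting this pointwise bound at $t=1$ to a bound $\cj_{\la,\phi_n}''(t)\le-\beta/2$ valid uniformly on an interval $[1-\delta,1+\delta]$ with $\delta$ independent of $n$, so that $\xi_n$ automatically lies in the good region once $\|\cj_\la'(\phi_n)\|_\la$ is sufficiently small. I would handle this by exploiting that $\cf_\la$ is $\cc^2$ on $L^{2^*}\setminus E_\la^0$ (Lemma~\ref{lem:prop-T}(c)), so that $\cj_\la\in\cc^2(E_\la^+\setminus\{0\})$ with explicit formulas for $\cj_{\la,\phi}''$ involving $\eta_\la'$ and $T_\la'$. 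The uniform boundedness of $(\phi_n)$ from above and away from zero in $E_\la^+$ bounds the arguments of these formulas, providing a uniform modulus of continuity for $t\mapsto\cj_{\la,\phi_n}''(t)$ near $t=1$. Inserting this into the mean value estimate gives $|t_n-1|\le 2|\cj_\la'(\phi_n)[\phi_n]|/\beta=O(\|\cj_\la'(\phi_n)\|_\la)$, completing the argument.
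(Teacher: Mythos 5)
Your proof is correct and follows essentially the same route as the paper. The paper works with $g_n(t)=\ch_\la(t\phi_n)$ and uses Lemma~\ref{est-G} to show $g_n'(t)<-\delta$ uniformly on a fixed interval around $t=1$, then invokes the inverse function theorem on $g_n$; you work with $\cj_{\la,\phi_n}''$ and the mean value theorem, which is the same estimate in a slightly different guise since $g_n(t)=t\,\cj_{\la,\phi_n}'(t)$, so $g_n'(t)=\cj_{\la,\phi_n}'(t)+t\,\cj_{\la,\phi_n}''(t)$ and the first term is negligible. Both arguments hinge on Lemma~\ref{est-G}, the Pohozaev-type lower bound $\liminf|\psi_n-T_\la(\psi_n)|_{2^*}>0$ (you sharpen this to the exact limit $2mc$, which is fine but not needed), and a uniform regularity claim near $t=1$ justified by the boundedness of $(\phi_n)$; your "uniform modulus of continuity of $\cj_{\la,\phi_n}''$" plays the same role, and is at the same level of informality, as the paper's "uniform boundedness of $g_n'(t)$ on bounded intervals" combined with Taylor. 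You also correctly identify and resolve the potential circularity (needing $\xi_n$ inside the good interval) by first extending the negativity bound to a fixed neighborhood of $t=1$, exactly as the paper implicitly does when asserting $g_n(1-\delta)>0>g_n(1+\delta)$.
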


\begin{proof}
If $(\phi_n)_n$ is a $(PS)_c$-sequence for $\cj_\la$ then $\big(\psi_n=\phi_n+\eta(\phi_n)\big)_n$ is a $(PS)_c$-sequence for $\tce_\la$, hence $(\psi_n-T_\la(\psi_n))_n$ is a $(PS)_c$-sequence for $\ce_\la$ which is bounded by Lemma \ref{PS-bdd}. Therefore $(\phi_n)_n$ is bounded. Moreover, since $\cj_\la(\phi_n)\to c>0$ we obtain:
\[
  \liminf_{n\to\infty}\big|\psi_n-T_\la(\psi_n)\big|_{2^*} > 0.
\]
Now we define $g_n:(0,+\infty)\to\R$ by $g_n(t)=\ch_\la(t\phi_n)$. Then $tg_n'(t)=\ch_\la'(t\phi_n)[t\phi_n]$, hence, by Lemma~\ref{est-G}, Taylor's formula and the uniform boundedness of $g_n'(t)$ on bounded intervals, we get
\[
  tg_n'(t)\leq 2g_n(1)-\frac2{m+1}\big| \psi_n-T_\la(\psi_n) \big|_{2^*}^{2^*}+C|t-1|
\]
for $t$ close to $1$ and some $C>0$ independent of $n$. Since $(\phi_n)_n$ is a $(PS)$-sequence for $\cj_\la$, we have $g_n(1)=\cj_\la'(\phi_n)[\phi_n]\to0$. Therefore there exists a constant $\de>0$ such that
\[
  g_n'(t)<-\de \text{ for all } t\in(1-\de,1+\de) \text{ and } n \text{ large.}
\]
Moreover, since $g_n(1-\de)>0$ and $g_n(1+\de)<0$ the Inverse Function Theorem yields that
$
 \bar \phi_n:=g_n^{-1}(0)\phi_n\in\widetilde\cp_\la\cap \span\{\phi_n\}
$
is well-defined for $n$ large. Furthermore, $g_n'(t)^{-1}$ is bounded by a constant, say, $c_1>0$ on $(1-\de,1+\de)$ due to the boundedness of $\{\phi_n\}$. As a consequence
\[
  \|\phi_n-\bar \phi_n\|_\la = |g_n^{-1}(0)-1|\cdot\|\phi_n\|_\la
   = \big|g_n^{-1}(0)-g_n^{-1}(\ch_\la(\phi_n))\big|\cdot\|\phi_n\|_\la
   \leq c_1\cdot |\ch_\la(\phi_n)|\cdot\|\phi_n\|_\la.
\]
Now the conclusion follows from
$|\ch_\la(\phi_n)| = O(\|\cj_\la'(\phi_n)\|_\la)$.
\end{proof}

Combining Lemma \ref{PS-esti1} and Proposition \ref{PS-esti2}, we obtain

\begin{Cor}\label{PS-esti-key}
For any $c>0$, if $(\psi_n)_n$ is a $(PS)_c$-sequence for $\tce_\la$, then there exists a sequence $(\phi_n)_n$ in $\widetilde\cp_\la$ such that $\|\psi_n-\phi_n-\eta_\la(\phi_n)\|_\la = O(\|\widetilde\ce_\la'(\psi_n)\|_\la)$.
In particular,
\[
  \max_{t>0}\cj_\la(t\psi_n^+) = \cj_\la(\phi_n) \leq \widetilde\ce_\la(\psi_n)+O(\|\widetilde\ce_\la'\big(\psi_n)\|_\la^2\big).
\]
\end{Cor}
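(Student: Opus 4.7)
\begin{altproof}{Corollary~\ref{PS-esti-key}}
The plan is to chain Lemma~\ref{PS-esti1} into Proposition~\ref{PS-esti2} and quantify the leftover error by two short Taylor expansions. From Lemma~\ref{PS-esti1} I extract two facts about the $(PS)_c$-sequence $(\psi_n)_n$: the discrepancy $\xi_n:=\psi_n^- - \eta_\la(\psi_n^+)\in E_\la^-$ satisfies $\|\xi_n\|_\la = O(\|\tce_\la'(\psi_n)\|_\la)$, and $(\psi_n^+)_n$ is a $(PS)_c$-sequence for $\cj_\la$ with the quantitative bound $\|\cj_\la'(\psi_n^+)\|_\la = O(\|\tce_\la'(\psi_n)\|_\la)$; this last bound falls out of the proof of Lemma~\ref{PS-esti1} since on $E_\la^+$ one has $\cj_\la'(\psi_n^+) = \tce_\la'(\psi_n^+ + \eta_\la(\psi_n^+))|_{E_\la^+}$ and $\psi_n^+ + \eta_\la(\psi_n^+)$ differs from $\psi_n$ only by $\xi_n$, on which a bound on $\tce_\la''$ acts. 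Since $c>0$, Proposition~\ref{PS-esti2} then produces $t_n$ with $\phi_n:=t_n\psi_n^+\in\widetilde\cp_\la$ and $|t_n-1| = O(\|\cj_\la'(\psi_n^+)\|_\la) = O(\|\tce_\la'(\psi_n)\|_\la)$.

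For the norm estimate I split
\[
  \psi_n - \phi_n - \eta_\la(\phi_n) = (1-t_n)\psi_n^+ + \xi_n + \bigl(\eta_\la(\psi_n^+) - \eta_\la(t_n\psi_n^+)\bigr).
\]
The first summand is $O(\|\tce_\la'(\psi_n)\|_\la)$ because $(\psi_n^+)_n$ is bounded in $E_\la^+$ by Lemma~\ref{PS-bdd}; the second is Lemma~\ref{PS-esti1}; the third has the same order because $\eta_\la$ is of class $\cc^1$ by Proposition~\ref{reduction-prop}~a), hence locally Lipschitz on bounded sets of $E_\la^+$.

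For the energy estimate, Proposition~\ref{reduction-prop}~c) guarantees that $\phi_n = t_n\psi_n^+$ is the unique positive maximizer of $g_n(t):=\cj_\la(t\psi_n^+)$, so $\cj_\la(\phi_n) = \max_{t>0}\cj_\la(t\psi_n^+)$. A Taylor expansion at $t=1$ yields
\[
  \cj_\la(\phi_n) - \cj_\la(\psi_n^+) = g_n'(1)(t_n-1) + O\bigl((t_n-1)^2\bigr) = O\bigl(\|\tce_\la'(\psi_n)\|_\la^2\bigr),
\]
since $g_n'(1) = \cj_\la'(\psi_n^+)[\psi_n^+] = O(\|\tce_\la'(\psi_n)\|_\la)$ and $g_n''$ is bounded uniformly in a fixed neighborhood of $t=1$. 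On the other hand, $\eta_\la(\psi_n^+)$ maximizes $\chi\mapsto\tce_\la(\psi_n^++\chi)$ over $E_\la^-$, so its first variation in the direction $\xi_n\in E_\la^-$ vanishes; a second Taylor expansion gives $\tce_\la(\psi_n) = \cj_\la(\psi_n^+) + \tfrac12\tce_\la''(\zeta_n)[\xi_n,\xi_n]$ for some $\zeta_n$ between $\psi_n^+ + \eta_\la(\psi_n^+)$ and $\psi_n$, hence $\cj_\la(\psi_n^+) - \tce_\la(\psi_n) = O(\|\xi_n\|_\la^2) = O(\|\tce_\la'(\psi_n)\|_\la^2)$. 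Summing the two estimates yields the claimed inequality.

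The main technical point is the uniform control of $g_n''$ and of $\tce_\la''(\zeta_n)[\xi_n,\xi_n]$, since $\tce_\la$ contains the critical $L^{2^*}$ term $\cf_\la$ whose Hessian is not globally Lipschitz. The required bound $|\cf_\la''(\zeta)[\phi,\phi]| \le C\bigl(1+|\zeta|_{2^*}^{2^*-2}\bigr)\|\phi\|_\la^2$ follows from the explicit formula for $\cf_\la''$ recorded before \eqref{Tmfm-1}, H\"older's inequality, and the Sobolev embedding $E\hookrightarrow L^{2^*}$; the $E$-boundedness of $(\psi_n)_n$ from Lemma~\ref{PS-bdd} then makes the constants uniform in $n$.
\end{altproof}
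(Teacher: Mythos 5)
Your proof is correct and follows essentially the same route as the paper: chain Lemma~\ref{PS-esti1} into Proposition~\ref{PS-esti2}, estimate $\|\psi_n-\phi_n-\eta_\la(\phi_n)\|_\la$ by the same three-term decomposition, and finish with a Taylor expansion controlled by a uniform bound on $\tce_\la''$ on bounded sets. The only stylistic difference is that you split the final Taylor step into two stages (first radially in $t$ from $\phi_n=t_n\psi_n^+$ to $\psi_n^+$, then in $E_\la^-$ from $\psi_n^++\eta_\la(\psi_n^+)$ to $\psi_n$), whereas the paper expands once around $\phi_n+\eta_\la(\phi_n)$ and observes that the linear term $\cj_\la'(\phi_n)[\psi_n^+-\phi_n]$ vanishes because $\phi_n\in\widetilde\cp_\la\cap\operatorname{span}\{\psi_n^+\}$; these are equivalent bookkeeping of the same second-order cancellation.
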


\begin{proof}
According to Lemma \ref{PS-esti1}, we have
$
 \|\psi_n^--\eta_\la(\psi_n^+)\|_\la \leq O(\|\widetilde\ce_\la'(\psi_n)\|_\la)
$
and $(\psi_n^+)_n$ is a $(PS)_c$-sequence for $\cj_\la$. Then, by Proposition \ref{PS-esti2}, there exists $\phi_n=t_n\psi_n^+$
such that
\begin{equation}\label{aaa4}
\begin{aligned}
\|\psi_n-\phi_n-\eta_\la(\phi_n)\|_\la
 &\leq \|\psi_n^--\eta_\la(\psi_n^+)\|_\la+\|\psi_n^+-\phi_n\|_\la+\|\eta_\la(\psi_n^+)-\eta_\la(\phi_n)\|_\la \\
 &\leq O(\|\widetilde\ce_\la'(\psi_n)\|_\la)+ O(\|\cj_\la'(\psi_n^+)\|_\la).
\end{aligned}
\end{equation}
Here we used that $(\psi_n)_n$ is bounded due to Lemma \ref{PS-bdd}, and the inequality
\[
  \|\eta_\la(\psi_n^+)-\eta_\la(\phi_n)\|_\la \leq \|\eta_\la'(\tau_n\psi_n^+)\|_\la\cdot \|\psi_n^+-\phi_n\|_\la = O(|t_n-1|)
\]
which can be easily checked. The boundedness of the second derivative of $\widetilde\ce_\la$ and Lemma \ref{PS-esti1} yield
\[
  \|\cj_\la'(\psi_n^+)\|_\la = \|\widetilde\ce_\la'(\psi_n^++\eta_\la(\psi_n^+))\|_\la
   \leq \|\widetilde\ce_\la'(\psi_n)\|_\la+O(\|\psi_n^--\eta_\la(\psi_n^+)\|_\la) = O(\|\widetilde\ce_\la'(\psi_n)\|_\la).
\]
This together with \eqref{aaa4} implies $\|\psi_n-\phi_n-\eta_\la(\phi_n)\|_\la = O(\|\widetilde\ce_\la'(\psi_n)\|_\la)$.

Next, Taylor's formula and the boundedness of the second derivative of $\widetilde\ce_\la$ imply
\[
\aligned
\widetilde\ce_\la(\psi_n)
 &= \widetilde\ce(\phi_n+\eta(\phi_n))+\widetilde\ce'(\phi_n
      +\eta_\la(\phi_n))[\psi_n-\phi_n-\eta_\la(\phi_n)]+O(\|\widetilde\ce_\la'(\psi_n)\|_\la^2)\\
 &=\cj_\la(\phi_n)+\cj_\la'(\phi_n)[\psi_n^+-\phi_n]+O(\|\widetilde\ce_\la'(\psi_n)\|_\la^2).
\endaligned
\]
Finally we have $\cj_\la'(\phi_n)[\psi_n^+-\phi_n]=0$ because $\phi_n\in\widetilde\cp_\la\cap \span\{\psi_n^+\}$. This implies the last estimate of the corollary.
\end{proof}

Now we address the main inequality \eqref{target} using our test spinor $\bar\va_\vr$ in \eqref{test spinor}. Clearly we have
\begin{\equ}\label{l1-norm}
  |\bar\va_\vr|_1=\int_M|\bar\va_\vr|d\vol_\ig \lesssim \vr^{\frac{m-1}2}
  \qquad\text{and}\qquad
  |\bar\va_\vr|_{2^*-1}^{2^*-1}=\int_M|\bar\va_\vr|^{2^*-1}d\vol_\ig \lesssim \vr^{\frac{m-1}2}
\end{\equ}
Since $\dim E_\la^0<\infty$ we obtain
\begin{\equ}\label{l-infty-norm}
  |T_\la(\bar\va_\vr)|_1 \lesssim |\bar\va_\vr|_1 \lesssim \vr^{\frac{m-1}2}
  \qquad\text{and}\qquad
  |T_\la(\bar\va_\vr)|_2 \lesssim |T_\la(\bar\va_\vr)|_1 \lesssim \vr^{\frac{m-1}2}.
\end{\equ}
It follows that
\begin{equation}\label{test-T-1}
\begin{aligned}
&\Big| \int_M|\bar\va_\vr-T_\la(\bar\va_\vr)|^{2^*}-|\bar\va_\vr|^{2^*} d\vol_\ig \Big|
  \lesssim \Big|\int_M|\bar\va_\vr-\theta T_\la(\bar\va_\vr)|^{2^*-2}(\bar\va_\vr-\theta T_\la(\bar\va_\vr), T(\bar\va_\vr))\,d\vol_\ig \Big| \\
&\hspace{1cm}
  \leq \int_M|\bar\va_\vr-\theta T_\la(\bar\va_\vr)|^{2^*-1}\cdot|T_\la(\bar\va_\vr)|\,d\vol_\ig
  \lesssim \int_M |\bar\va_\vr|^{2^*-1}|T_\la(\bar\va_\vr)|+|T_\la(\bar\va_\vr)|^{2^*} d\vol_\ig  \\
&\hspace{1cm}
  \lesssim |T_\la(\bar\va_\vr)|_\infty \int_M|\bar\va_\vr|^{2^*-1}d\vol_\ig + |T_\la(\bar\va_\vr)|_\infty^{2^*}
  \lesssim \vr^{m-1}
\end{aligned}
\end{equation}
where $0<\theta<1$. Moreover, for any $\psi\in E$ with $\|\psi\|_\la\leq1$, we can deduce from \eqref{l1-norm} and \eqref{l-infty-norm} that
\begin{eqnarray}\label{test-T-2}
&&\Big| \int_M|\bar\va_\vr-T_\la(\bar\va_\vr)|^{2^*-2}
(\bar\va_\vr-T_\la(\bar\va_\vr),\psi) -
|\bar\va_\vr|^{2^*-2}(\bar\va_\vr,\psi) d\vol_\ig \Big|
\nonumber\\
&& \qquad \qquad\leq
(2^*-1)\int_M|\bar\va_\vr-\theta
T_\la(\bar\va_\vr)|^{2^*-2}\cdot
|T_\la(\bar\va_\vr)|\cdot |\psi| d\vol_\ig  \nonumber \\
&& \qquad \qquad\lesssim
\int_M|\bar\va_\vr|^{2^*-2}\cdot|T_\la(\bar\va_\vr)|\cdot
|\psi|+ |T_\la(\bar\va_\vr)|^{2^*-1}\cdot|\psi| d\vol_\ig
\nonumber  \\
&& \qquad \qquad \lesssim
|T_\la(\bar\va_\vr)|_\infty \cdot \Big(\int_M|\bar\va_\vr|^{\frac{4m}{m^2-1}}d\vol_\ig\Big)^{\frac{m+1}{2m}}
+ |T_\la(\bar\va_\vr)|_\infty^{2^*-1}   \nonumber\\
&& \qquad \qquad \lesssim \ \begin{cases}
 \vr  &\text{if } m=2, \\
 \vr^2|\ln\vr|^{\frac23} &\text{if } m=3,\\
 \vr^{\frac{m+1}2} &\text{if } m\geq4.
 \end{cases}
\end{eqnarray}
Therefore $(\widetilde\va_\vr:=\bar\va_\vr-T_\la(\bar\va_\vr))_\vr$ is a $(PS)$-sequence for $\tce_\la$ on $E$. Combining these facts and Lemma~\ref{derivative-esti}, Lemma~\ref{energy-esti}, Corollary~\ref{PS-esti-key}, we finally obtain
\begin{\equ}\label{energy-esti-key-2}
\max_{t>0}\cj_\la(t\bar\va_\vr^+)
 \leq \frac1{2m}\left(\frac{m}2\right)^m \om_m
    + \begin{cases}
        -\lm C \vr|\ln\vr| + O(\vr)  &\text{if } m=2,\\
        -\lm C \vr + O(\vr^2|\ln\vr|^{\frac43}) &\text{if } m=3,\\
        -\lm C \vr+ O(\vr^2) &\text{if } m\geq4
      \end{cases}
\end{\equ}
for some constant $C>0$ depending only on the dimension.

Now \eqref{energy-estimate-M} implies, setting $\phi_\vr=\frac{1}{\|\bar\va_\vr^+\|_\la}\bar\va_\vr^+$,
\[
  \inf\cm_\la \leq \max_{\psi\in \ehat_\la(\phi_\vr)}\tce_\la(\psi) = \max_{t>0}\cj_\la(t\bar\va_\vr^+)
    < \frac1{2m}\left(\frac{m}2\right)^m \om_m = \ga_{crit}
\]
for $\vr>0$ small.

This concludes the proof of Theorem~\ref{main thm}. As mentioned before Lemma~\ref{lem:continuation} also Theorem~\ref{thm:mult} follows.

\subsection{Proof of Theorem \ref{main thm} for $\la\leq0$}
This situation is much simpler because hypothesis $(f_5)$ is already an estimate of $\int_M F(|\bar\va_\vr|)d\vol_\ig$ needed below. Indeed, first of all, we have
\[
\aligned
\vr^m\int_{|y|\leq\frac\de\vr}F\bigg( \frac{A\vr^{-\frac{m-1}2}}{(1+|y|^2)^{\frac{m-1}2}} \bigg)dy
  &= \vr^m\om_{m-1}\int_0^{\frac\de\vr}F\bigg( \frac{A\vr^{-\frac{m-1}2}}{(1+r^2)^{\frac{m-1}2}} \bigg)r^{m-1}dr  \\
  &= \de^m\vr^m\om_{m-1}\int_0^{\frac1\vr}F\bigg( \frac{A\vr^{-\frac{m-1}2}}{(1+\de^2s^2)^{\frac{m-1}2}} \bigg)s^{m-1}ds
\endaligned
\]
where $r=\de s$ and $A>0$ is some constant. Thus, after rescaling $\vr$, hypothesis $(f_5)$ and Lemma~\ref{energy-esti} imply
\begin{\equ}\label{F-lower esti}
\frac{\int_MF(|\bar\va_\vr|)d\vol_\ig}{\int_M|\bar\va_\vr|^2d\vol_\ig}
  \geq \frac{C\cdot\vr^{m-1}}{|\ln\vr|^{\max\{3-m,\,0\}}}\int_0^{\frac1\vr}F\bigg( \frac{\vr^{-\frac{m-1}2}}{(1+s^2)^{\frac{m-1}2}} \bigg) s^{m-1}ds
  \to \infty
\end{\equ}
as $\vr\to0$ for some constant $C>0$.

According to hypothesis $(f_4)$, for any $\de>0$ there is $C_\de>0$ such that $f(s)s\leq C_\de s+\de F(s)^{\frac{m+1}{2m}}$ for all $s\geq0$. Therefore we have
\[
\|f(|\bar\va_\vr|)\bar\va_\vr\|_{E^*}
  \lesssim C_\de \|\bar\va_\vr\|_{E^*} + \de \Big( \int_MF(|\bar\va_\vr|)d\vol_\ig \Big)^{\frac{m+1}{2m}}.
\]

Since $\cl_\la$ has the form
\[
  \cl_\la(\psi) = \frac12\left(\|\psi^+\|_\la^2-\|\psi^-\|_\la^2\right) - \ck(\psi)
\]
with $\ck(\psi) = \int_M F(|\psi|)d\vol_\ig + \frac1{2^*}\int_M|\psi|^{2^*}d\vol_\ig$ being strictly convex, a straightforward calculation shows that $\ck$ also satisfies the following inequality which is an analogue of \eqref{Tmfm-1}. For any $\psi\in E\setminus\{0\}$ and $\phi\in E$ there holds:
\[
  \big( \ck''(\psi)[\psi,\psi]-\ck'(\psi)[\psi] \big)+2\big( \ck''(\psi)[\psi,\phi]-\ck'(\psi)[\phi] \big)+\ck''(\psi)[\phi,\phi]
\geq \frac2{m+1}|\psi|_{2^*}^{2^*} \,.
\]
Therefore, for $\lm\not\in \spec(D)\cap(-\infty,0]$ Lemma~\ref{reduction-prop} applies to $\cl_\la$, and we can use the arguments following it to conclude that
\[
  \inf\cm_\la \leq \max_{\psi\in \ehat(\phi_\vr)}\cl_\la(\psi) \leq \cl_\la(\bar\va_\vr)+O(\|\cl_\la'(\bar\va_\vr)\|_\la^2)
\]
where $\phi_\vr=\frac{1}{\|\bar\va_\vr^+\|_\la}\bar\va_\vr^+$ and $\bar\va_\vr$ is our test spinor.

Now we deduce from Lemma \ref{derivative-esti} and Lemma \ref{energy-esti} that
\[
\aligned
\cl_\la(\bar\va_\vr)+O(\|\cl_\la'(\bar\va_\vr)\|_\la^2)
  &\leq \frac1{2m}\big(\frac{m}2\big)^m \om_m-\int_MF(|\bar\va_\vr|)d\vol_\ig
          -\begin{cases}
             \lm C_\de' \vr|\ln\vr| &\text{if } m=2\\
             \lm C_\de'\vr &\text{if } m\geq3
           \end{cases}  \\
  &\qquad + \de^2 \cdot C \Big( \int_MF(|\bar\va_\vr|)d\vol_\ig \Big)^{\frac{m+1}{m}}.
\endaligned
\]
This together with \eqref{F-lower esti} implies
\[
  \max_{\psi\in \ehat_\la(\phi_\vr)}\cl_\la(\psi) < \frac1{2m}\big(\frac{m}2\big)^m \om_m \quad
  \text{for } \vr \text{ small}.
\]
Now the existence result follows easily, completing the proof of Theorem~\ref{main thm}.

\subsection{Proof of Theorem~\ref{thm:mult2}}

To begin with we consider the functional $\rr_\la: \te_\la\setminus\{0\}\to\R$ defined by
\[
  \rr_\la(\psi) = \frac{\int_M(D\psi,\psi)-\la|\psi|^2 d\vol_\ig}{\big(\int_M|\psi-T_\la(\psi)|^{2^*}d\vol_\ig\big)^{\frac2{2^*}}}
                    = \frac{\|\psi^+\|_\la^2-\|\psi^-\|_\la^2}{|\psi-T_{\la}(\psi)|_{2^*}^2}
\]
Then we have
\begin{\equ}\label{eq:R-derivative}
  \rr_\la'(\psi)[\va] = \frac{2}{A_\la(\psi)^{\frac2{2^*}}}\left( \real\int_M((D-\la)\psi,\va)d\vol_\ig-\frac{\rr_\la(\psi)}{2^*}A_\la(\psi)^{\frac{2-2^*}{2^*}}\cdot A_\la'(\psi)[\va] \right)
\end{\equ}
where
\[
  A_\la(\psi)=\int_M|\psi-T_\la(\psi)|^{2^*} d\vol_\ig
\]

For any $\phi\in E_\la^+\setminus\{0\}$ and  any $c>0$ the set $\{\chi\in E_\la^-: \rr_\la(\phi+\chi)\ge c\}$ is strictly convex and bounded because
\[
  \chi\mapsto \|\phi\|_\la^2-\|\chi\|_\la^2-\la|\phi+\chi|_2^2-c\big|\phi+\chi-T_\la(\phi+\chi)\big|_{2^*}^{2^*}
\]
is strictly concave and anti-coercive on $E_\la^-$. Therefore the map $\chi\mapsto \rr_\la(\phi+\chi)$ has a unique maximum point $\chi_\phi\in E_\la^-$. Now, let us define
\[
  \cs_\la(\phi) = \rr_\la(\phi+\chi_\phi)
\]

\begin{Lem}\label{identity cj}
  $\cs_\la(\phi)=\big( 2m \cj_\la(\phi)\big)^{\frac 1m}$ for $\phi\in\widetilde\cp_\la$.
\end{Lem}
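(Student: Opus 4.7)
The plan is to derive the identity from a single scaling computation combined with the variational characterisation of $\widetilde\cp_\la$ as the locus where $t=1$ maximises $t\mapsto\cj_\la(t\phi)$.

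The core computation I would establish first is a scalar identity: for $\psi\in\te_\la\setminus E_\la^0$ with $N(\psi):=\|\psi^+\|_\la^2-\|\psi^-\|_\la^2>0$, using the positive homogeneity of $T_\la$ from Lemma~\ref{lem:prop-T}(a),
\[
  \tce_\la(t\psi)=\tfrac{t^2}{2}N(\psi)-\tfrac{t^{2^*}}{2^*}D(\psi),\qquad D(\psi):=|\psi-T_\la(\psi)|_{2^*}^{2^*},
\]
which is maximised in $t$ at $t_0$ satisfying $t_0^{2^*-2}=N(\psi)/D(\psi)$, with value $\tfrac{1}{2m}\,N(\psi)^m D(\psi)^{1-m}$. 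Since $\tfrac{2}{2^*}=\tfrac{m-1}{m}$, this is $\tfrac{1}{2m}\rr_\la(\psi)^m$, so $\sup_{t>0}\tce_\la(t\psi)=\tfrac{1}{2m}\rr_\la(\psi)^m$.

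I would then exploit this for $\phi\in\widetilde\cp_\la$. The condition $\cj_\la'(\phi)[\phi]=0$ together with Proposition~\ref{reduction-prop}(c) identifies $t=1$ as the unique positive critical point of $\cj_{\la,\phi}(t)=\cj_\la(t\phi)$ and as its global maximum, hence
\[
  \cj_\la(\phi)=\sup_{t>0}\cj_\la(t\phi)=\sup_{t>0,\;\chi\in E_\la^-}\tce_\la(t\phi+\chi).
\]
Substituting $\chi=t\chi'$, which for each fixed $t>0$ is a bijection of $E_\la^-$, and interchanging the two suprema rewrites the right-hand side as
\[
  \sup_{\chi'\in E_\la^-}\sup_{t>0}\tce_\la\bigl(t(\phi+\chi')\bigr)=\tfrac{1}{2m}\sup_{\chi'\in E_\la^-}\rr_\la(\phi+\chi')^m=\tfrac{1}{2m}\cs_\la(\phi)^m,
\]
by the scalar identity above and the definition of $\cs_\la$. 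Rearranging produces the claim.

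The only subtle point is that the scalar identity $\sup_{t>0}\tce_\la(t\psi)=\tfrac{1}{2m}\rr_\la(\psi)^m$ holds as stated only when $N(\psi)>0$; when $N(\psi)\le 0$ the supremum over $t>0$ is $0$ (approached as $t\to 0^+$). In the outer supremum over $\chi'$ this causes no difficulty, because $\cj_\la(\phi)>0$ for $\phi\in\widetilde\cp_\la$ and $\cs_\la(\phi)\ge\rr_\la(\phi)=\|\phi\|_\la^2/|\phi-T_\la(\phi)|_{2^*}^2>0$, so any $\chi'$ with $N(\phi+\chi')\le 0$ contributes at most $0$ to either supremum and is harmless, and the $m$-th root at the end is unambiguous.
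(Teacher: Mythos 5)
Your proof is correct, but it follows a genuinely different route from the paper's. The paper works entirely at the critical point: it expands the Nehari identity $\cj_\la'(\phi)[\phi]=0$ to obtain $\|\psi^+\|_\la^2-\|\psi^-\|_\la^2 = |\psi-T_\la(\psi)|_{2^*}^{2^*}$ with $\psi=\phi+\eta_\la(\phi)$, deduces $\cj_\la(\phi)=\tfrac1{2m}|\psi-T_\la(\psi)|_{2^*}^{2^*}$ directly, and then verifies $\chi_\phi=\eta_\la(\phi)$ by checking, via the explicit formula for $\rr_\la'$, that $\rr_\la'(\psi)$ vanishes on $E_\la^-$; with that identification the quotient $\cs_\la(\phi)=\rr_\la(\psi)$ reduces to $|\psi-T_\la(\psi)|_{2^*}^{2^*-2}$, which gives the result. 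You instead establish the scalar fibre identity $\sup_{t>0}\tce_\la(t\psi)=\tfrac1{2m}\rr_\la(\psi)^m$ using only the homogeneity $T_\la(t\psi)=tT_\la(\psi)$, then express $\cj_\la(\phi)$ as the iterated supremum $\sup_{t>0}\sup_{\chi\in E_\la^-}\tce_\la(t\phi+\chi)$ (using Proposition~\ref{reduction-prop}(c) to see that $t=1$ is the global maximiser along the ray), substitute $\chi=t\chi'$, and interchange suprema. Your change of variables is exactly the algebra that underlies the paper's identification of $\chi_\phi$ with $\eta_\la(\phi)$, but you never need that identification explicitly, nor the derivative of $\rr_\la$; the only price is the small case distinction for $N(\phi+\chi')\le 0$, which you handle correctly. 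The paper's route gives the extra structural information that $\chi_\phi=\eta_\la(\phi)$ on $\widetilde\cp_\la$; your route is shorter and more robust, since it only uses monotonicity of $s\mapsto s^m$ and the homogeneity of $T_\la$.
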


\begin{proof}
Let $\phi\in\widetilde\cp_\la$, then
\[
0=\cj_\la'(\phi)[\phi]=\int_M\big(D(\phi+\eta_\la(\phi)),  \phi+\eta_\la(\phi)\big)d\vol_\ig-\la|\phi+\eta_\la(\phi)|_2^2-\big|\phi+\eta_\la(\phi)-T_\la(\phi+\eta_\la(\phi)) \big|_{2^*}^{2^*}
\]
Hence $\cj_\la(\phi)=\cj_\la(\phi)-\frac12\cj_\la'(\phi)[\phi]=\frac1{2m}\big|\phi+\eta_\la(\phi)-T_\la(\phi+\eta_\la(\phi)) \big|_{2^*}^{2^*}$.

On the other hand, \eqref{eq:R-derivative} implies
\[
  \rr_\la'(\phi+\eta_\la(\phi))[\chi] \equiv 0 \quad \text{for all $\chi\in E_\la^-$.}
\]
This together with the fact $\rr_\la(\phi+\eta_\la(\phi))>0$ yields that $\chi_\phi=\eta_\la(\phi)$ for $\phi\in\widetilde\cp_\la$. And this in turn implies
\[
  \cs_\la(\phi) = \left( \int_M\big|\phi+\eta_\la(\phi)-T_\la(\phi+\eta_\la(\phi)) \big|^{2^*} \right)^{1-\frac2{2^*}}
                    = \big( 2m \cj_\la(\phi)\big)^{\frac 1m}
\]
\end{proof}

Theorem~\ref{thm:mult2} follows from the next Proposition. Recall the definition of
\[
  \nu = \frac{m}2\bigg( \frac{\om_m}{ {\rm Vol} (M,\ig)} \bigg)^{\frac1m}
\]
from \eqref{eq:nu}.

\begin{Prop}
For $\la\in\R$ the functional $\cm_\la$ admits at least 
\[
  \ell = \ell(\la) = \dim_\C\left( \bigoplus_{\la<\la_k<\la+\nu}\ker(D-\la_k) \right).
\]
distinct $S^1$-orbits of critical points.
\end{Prop}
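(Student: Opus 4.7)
The plan is to apply a standard $S^1$-equivariant minimax scheme to the functional $\cm_\la\in\cc^1(S_\la^+)$, using that $S_\la^+$ and $\cm_\la$ are both invariant under the free $S^1$-action $\zeta\cdot\psi=\zeta\psi$ on $E_\la^+\setminus\{0\}$. Let $\gen$ denote the Fadell--Rabinowitz $S^1$-cohomological index on closed $S^1$-invariant subsets of $S_\la^+$ and set
\[
  c_j:=\inf\big\{\sup_{\phi\in A}\cm_\la(\phi):A\subset S_\la^+\text{ closed, $S^1$-invariant, }\gen(A)\geq j\big\},\quad j\geq 1.
\]
By Proposition~\ref{PS-condition} and Proposition~\ref{reduction}, $\cm_\la$ satisfies $(PS)_c$ on $S_\la^+$ for every $c<\ga_{crit}$. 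The $S^1$-equivariant deformation lemma then gives the usual conclusions: every $c_j<\ga_{crit}$ is a critical value, and an equality $c_j=\dots=c_{j+r-1}<\ga_{crit}$ forces the critical set at that level to have index at least $r$ and hence to contain at least $r$ distinct $S^1$-orbits. So the theorem reduces to exhibiting one closed $S^1$-invariant $A\subset S_\la^+$ with $\gen(A)\geq\ell$ and $\sup_A\cm_\la<\ga_{crit}$.

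I take
\[
  V_\la:=\bigoplus_{\la<\la_k<\la+\nu}\ker(D-\la_k)\subset E_\la^+,\qquad A:=V_\la\cap S_\la^+.
\]
Since $V_\la$ is a complex linear subspace of complex dimension $\ell$, the unit $\|\cdot\|_\la$-sphere $A$ is $S^1$-homeomorphic to the standard sphere in $\C^\ell$, so $\gen(A)=\ell$. The core task is the bound $\sup_A\cm_\la<\ga_{crit}$. Since $F\geq 0$ one has $\cl_\la\leq\ce_\la\leq\widetilde\ce_\la$; together with the facts that $\widetilde\ce_\la(\psi+\chi^0)=\widetilde\ce_\la(\psi)$ for $\chi^0\in E_\la^0$ and that $\widetilde\ce_\la(\phi+\cdot)$ is strictly concave on $E_\la^-$, this yields
\[
  \cm_\la(\phi)\leq\max_{\psi\in\ehat_\la(\phi)}\widetilde\ce_\la(\psi)=\max_{t\geq 0}\cj_\la(t\phi)=\cj_\la(t_\phi\phi),
\]
the last maximum being attained at a unique $t_\phi>0$ with $t_\phi\phi\in\widetilde\cp_\la$. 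Lemma~\ref{identity cj} and the $0$-homogeneity of $\cs_\la$ then give $\cm_\la(\phi)\leq\tfrac{1}{2m}\cs_\la(\phi)^m$.

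To estimate $\cs_\la(\phi)$, expand $\phi=\sum_k a_k\eta_k$ in unit $L^2$-eigenspinors $\eta_k\in\ker(D-\la_k)$. The constraint $\|\phi\|_\la^2=\sum|a_k|^2(\la_k-\la)=1$ together with the strict inequality $\la_k-\la<\nu$ on the finite index set yields a uniform $\de>0$ with $|\phi|_2^2=\sum|a_k|^2\geq(\nu-\de)^{-1}$. Since $V_\la$, $E_\la^0=\ker(D-\la)$ and $E_\la^-$ are direct sums of pairwise distinct $D$-eigenspaces, they are $L^2$-orthogonal, so for every $\chi\in E_\la^-$ the spinor $\psi:=\phi+\chi-T_\la(\phi+\chi)$ satisfies $|\psi|_2^2\geq|\phi|_2^2\geq(\nu-\de)^{-1}$. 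H\"older's inequality gives $|\psi|_{2^*}^2\geq|\psi|_2^2\vol(M,\ig)^{-1/m}$, and using the identity $\nu\vol(M,\ig)^{1/m}=\tfrac m2\om_m^{1/m}=\la_{\min}^+(S^m)$ built into \eqref{eq:nu},
\[
  \cs_\la(\phi)=\sup_{\chi\in E_\la^-}\frac{\|\phi\|_\la^2-\|\chi\|_\la^2}{|\psi|_{2^*}^2}\leq(\nu-\de)\vol(M,\ig)^{1/m}<\la_{\min}^+(S^m).
\]
Hence $\cm_\la(\phi)<\tfrac{1}{2m}\la_{\min}^+(S^m)^m=\ga_{crit}$ uniformly on the compact set $A$, and combining with the setup above, $c_1\leq\dots\leq c_\ell\leq\sup_A\cm_\la<\ga_{crit}$, producing at least $\ell$ distinct $S^1$-orbits of critical points of $\cm_\la$.

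The main obstacle is the sharp energy estimate just sketched. Bridging the spectral quantity $\nu$ and the Aubin--Talenti-type threshold $\ga_{crit}=\tfrac{1}{2m}\la_{\min}^+(S^m)^m$ requires three ingredients to mesh precisely: Lemma~\ref{identity cj} (which cancels the effect of the subcritical term $F$ on the test set), the $L^2$-orthogonality of $V_\la\oplus E_\la^0\oplus E_\la^-$ as a sum of distinct $D$-eigenspaces, and the sharp exponent $1/(2m)$ in H\"older's inequality, all calibrated by the very definition \eqref{eq:nu} of $\nu$.
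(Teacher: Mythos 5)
Your proposal is correct and follows essentially the same route as the paper: minimax over the eigenspace sum $V_\la=\bigoplus_{\la<\la_k<\la+\nu}\ker(D-\la_k)$, passing to $\tce_\la$ and $\cj_\la$, invoking Lemma~\ref{identity cj} together with the $L^2$-orthogonality of $V_\la$, $E_\la^0$, $E_\la^-$ and H\"older's inequality to get $\sup_{V_\la\cap S_\la^+}\cm_\la<\ga_{crit}$, and then running the equivariant minimax machinery below the Palais--Smale threshold. The only technical divergence is your use of the Fadell--Rabinowitz $S^1$-cohomological index in place of the paper's $S^1$-genus (defined via joins of $S^1/H_i$, with multiplicity from \cite[Theorem~2.19]{Bartsch:1993}); both indices have the monotonicity, continuity, and dimension properties needed here, and your explicit uniform $\de>0$ is a slightly cleaner way of securing the strict inequality that the paper obtains from compactness of $X_j\cap\widetilde\cp_\la$.
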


\begin{proof}
Let $\ga=\ga_{S^1}$ denote the $S^1$-genus, i.e.\ for a topological space $X\ne\emptyset$ on which the group $S^1$ acts continuously without fixed points, $\ga(X)$ is the infimum over all $k\in\N$ such that there exist finite subgroups $H_1,\dots,H_k\subset S^1$ and a continuous equivariant map $X \to S^1/H_1\ast\dots\ast S^1/H_k$ where $\ast$ denotes the join. If no such $k$ exists this means $\ga(X)=\infty$. The $S^1$-genus has properties analogous to the Krasnoselski genus for spaces with an action of $Z/2$; see \cite{Bartsch:1990, Bartsch:1993}. Now we define
\[
  \be_j := \inf_{\substack{A\subset S_\la^+ \\ \ga(A)\geq j}}\max_{\phi\in A}\cm_\la(\phi)\quad \text{for $j\geq1$.}
\]
where we only consider $S^1$-invariant subsets $A\subset S_\la^+$. Clearly we have $\be_j\le\be_{j+1}$ for all $j\ge1$. If
\[
  X_j \subset \bigoplus_{\la<\la_k<\la+\nu}\ker(D-\la_k)
\]
is any complex $j$-dimensional subspace, $1\le j\le\ell$, then $X_j\cap S_\la^+$ is equivariantly homeomorphic to the unit sphere in $X_j$, hence $\ga(X_j\cap S_\la^+) = \dim_\C(X_j)$. Now \eqref{def:tilde-e} and Lemma \ref{identity cj} imply
\[
  \be_j \le \max_{\phi\in X_j\cap S_\la^+}\cm_\la(\phi)\le \max_{\phi\in X_j\cap \widetilde\cp_\la}\cj_\la(\phi)
             = \frac 1{2m}\max_{\phi\in X_j\cap \widetilde\cp_\la} S_\la(\phi)^m.
\]
Observe that, for any $\phi\in X_j\cap \widetilde\cp_\la$,
\[
  \|\phi\|_\la^2-\|\chi_\phi\|_\la^2 < \|\phi\|_\la^2 \leq \nu|\phi|_2^2 < \nu|\phi+\chi_\phi-T_\la(\phi+\chi_\phi)|_2^2
\]
where the last inequality follows from the fact that $\phi\in E_\la^+$, $\chi_\phi\in E_\la^-$ and $T_\la(\phi+\chi_\phi)\in E_\la^0$ are orthogonal in $L^2$. Finally, using H\"older's inequality, we find for $j=1,\dots,\ell$:
\[
  \be_j < \frac{\nu^m}{2m} \cdot \max_{\phi\in X_j\cap \widetilde\cp_\la}
                    \left(\frac{|\phi+\chi_\phi-T_\la(\phi+\chi_\phi)|_2^2}{|\phi+\chi_\phi-T_\la(\phi+\chi_\phi)|_{2^*}^2}\right)^m
          \le \nu^m\cdot\frac{{\rm Vol} (M,\ig)}{2m}=\ga_{crit}.
\]
Since the Palais-Smale condition holds below $\ga_{crit}$ each $\be_j$, $j=1,\dots,\ell$, is a critical value, and if $\be_j=\be_{j+1}$ for some $j$ then $\cm_\la$ has infinitely many $S^1$-orbits of critical points at the level$\be_j$; see \cite[Theorem~2.19]{Bartsch:1993}.
\end{proof}

\vspace{2mm}
{\sc Thomas Bartsch\\
 Mathematisches Institut, Universit\"at Giessen\\
 35392 Giessen, Germany}\\
 Thomas.Bartsch@math.uni-giessen.de\\

{\sc Tian Xu\\
 Center for Applied Mathematics, Tianjin University\\
 Tianjin, 300072, China}\\
 xutian@amss.ac.cn

\end{document}